\newtheorem{prop}{Proposition}[section]
\newtheorem{coro}[prop]{Corollaire}
\newtheorem{theo}[prop]{Théorème}
\newtheorem{lemm}[prop]{Lemme}
\newtheorem*{hyp}{Hypothèse $(\star)$}
\theoremstyle{definition}
\newtheorem{defi}[prop]{Définition}
\newtheorem*{nota}{Notation}
\theoremstyle{remark}
\newtheorem{rema}[prop]{Remarque}
\newtheorem{exem}[prop]{Exemple}
\newtheorem*{reme}{Remerciements}
\newcommand{\resp}{\emph{resp}.\xspace}
\newcommand{\cf}{\emph{cf.}\xspace}
\newcommand{\etc}{\emph{etc.}\xspace}
\newcommand{\Dec}{\operatorname{Dec}}
\newcommand{\pp}{\operatorname{p}}
\newcommand{\Mod}{\operatorname{Mod}}
\newcommand{\Top}{\operatorname{Top}}
\newcommand{\Hom}{\operatorname{Hom}}
\newcommand{\Parf}{\operatorname{Parf}}
\newcommand{\ner}{\operatorname{ner}}
\newcommand{\Ner}{\operatorname{Ner}}
\newcommand{\fpqc}{\operatorname{fpqc}}
\newcommand{\Zar}{\operatorname{Zar}}
\newcommand{\ob}{\operatorname{ob}}
\newcommand{\fl}{\operatorname{fl}}
\newcommand{\Sch}{\operatorname{Sch}}
\newcommand{\Spec}{\operatorname{Spec}}
\newcommand{\uSpec}{\operatorname{\underline{Spec}}}
\newcommand{\LL}{\operatorname{L}}
\newcommand{\BB}{\operatorname{B}}
\newcommand{\Db}{\operatorname{D^b}}
\newcommand{\DD}{\operatorname{D}}
\newcommand{\PP}{\operatorname{P}}
\newcommand{\Simp}{\operatorname{Simp}}
\newcommand{\dd}{\operatorname{d}}
\newcommand{\sss}{\operatorname{s}}
\newcommand{\NN}{\operatorname{N}}
\newcommand{\Cat}{\operatorname{Cat}}
\newcommand{\RR}{\operatorname{R}}
\newcommand{\st}{\operatorname{st}}
\newcommand{\Diag}{\operatorname{Diag}}
\newcommand{\pt}{\operatorname{pt}}
\newcommand{\CC}{\operatorname{C}}
\newcommand{\GSp}{\operatorname{GSp}}
\newcommand{\GL}{\operatorname{GL}}
\newcommand{\GG}{\operatorname{\mathbb{G}}}
\newcommand{\Res}{\operatorname{Res}}
\newcommand{\tr}{\operatorname{tr}}
\newcommand{\End}{\operatorname{End}}
\newcommand{\rg}{\operatorname{rg}}
\newcommand{\diag}{\operatorname{diag}}
\newcommand{\uSym}{\operatorname{\underline{Sym}}}
\newcommand{\Com}{\operatorname{Com}}
\newcommand{\mon}{\operatorname{mon}}
\newcommand{\SSSS}{\operatorname{S}}
\newcommand{\id}{\operatorname{id}}
\newcommand{\HH}{\operatorname{H}}
\newcommand{\lloc}{\operatorname{loc}}
\newcommand{\otL}{\operatorname{\mathop\otimes\limits^L}}
\newcommand{\Div}{\operatorname{Div}}
\newcommand{\Parfis}{\operatorname{Parf-is}}
\newcommand{\Pic}{\operatorname{Pic}}
\newcommand{\Picis}{\operatorname{Pic-is}}
\newcommand{\Fr}{\operatorname{Fr}}
\newcommand{\can}{\operatorname{can}}
\newcommand{\red}{\operatorname{red}}
\newcommand{\Gp}{\Gamma_0(p)}
\newcommand{\Gpp}{\Gamma_1(p)}
\newcommand{\QQ}{\mathbb{Q}}
\newcommand{\OF}{\mathcal{O}_F}
\newcommand{\Zp}{\mathbb{Z}_{(p)}}
\newcommand{\Qp}{\mathbb{Q}_p}
\newcommand{\Zps}{\mathbb{Z}_{(p)}^{\times}}
\newcommand{\Nn}{\mathbb{N}}
\newcommand{\ZZ}{\mathbb{Z}} 
\newcommand{\NNN}{\mathbb{N}}
\newcommand{\OS}{\mathcal{O}_S}
\newcommand{\OX}{\mathcal{O}_X}
\newcommand{\OY}{\mathcal{O}_Y}
\newcommand{\Afp}{\mathbb{A}_f^{p}}
\newcommand{\Fp}{\mathbb{F}_{p}}
\newcommand{\Fq}{\mathbb{F}_{q}}
\newcommand{\Zq}{\mathbb{Z}_{q}}
\newcommand{\Zpp}{\mathbb{Z}_{p}}
\newcommand{\LLL}{\mathcal{L}}
\newcommand{\Yq}{Y_{0,\Zq}}
\newcommand{\CCC}{\mathbb{C}}
\newcommand{\SSS}{\mathcal{S}}
\newcommand{\AAA}{\mathcal{A}}
\newcommand{\III}{\mathcal{I}}
\newcommand{\JJJ}{\mathcal{J}}
\newcommand{\NNNNN}{\mathcal{N}}
\newcommand{\Fqs}{\mathbb{F}_{q}^{\times}}
\newcommand{\Zqs}{\mathbb{Z}_{q}^{\times}}
\newcommand{\OO}{\mathcal{O}}
\newcommand{\AlGv}{_A\ell_G^{\vee}}
\newcommand{\AplGv}{_{A'}{\ell}_G^{\vee}}
\newcommand{\AllGv}{_A\underline{\ell}_G^{\vee}}
\newcommand{\lMKv}{\ell^{M,\vee}_K}
\newcommand{\llMK}{\underline{\ell}^{M}_K}
\newcommand{\uHom}{\underline{\Hom}}
\newcommand{\cpk}{\chi^{p^k}}
\newcommand{\Fqm}{\mathbb{F}_q^{\mon}}
\newcommand{\Am}{A^{\mon}}
\newcommand{\ALS}{A\otL\OS}
\newcommand{\ZM}{\ZZ[M]}
\newcommand{\CCCC}{\CC_{\centerdot}\text{--}(1)}
\newcommand{\CCd}{\CC_{\centerdot}}
\newcommand{\PPd}{\PP_{\centerdot}}
\newcommand{\Delred}{\Delta^2\text{--}^{\red}(-1)}
\newcommand{\ddd}{\Delta}
\newcommand{\ddo}{\Delta^{\circ}}
\newcommand{\ddp}{\overline{\Delta}'}
\newcommand{\ddb}{\overline{\Delta}}
\newcommand{\Topd}{\Top^{\circ}}
\newcommand{\nerd}{\ner^{\circ}}
\newcommand{\Xloc}{X_{\lloc}}
\newcommand{\XZar}{X_{\Zar}}
\newcommand{\Xloct}{\widetilde{X_{\lloc}}}
\newcommand{\XZart}{\widetilde{X_{\Zar}}}
\newcommand{\Yloc}{Y_{\lloc}}
\newcommand{\Yloct}{\widetilde{Y_{\lloc}}}
\newcommand{\Sloc}{S_{\lloc}}
\newcommand{\Sfpqc}{S_{\fpqc}}
\newcommand{\Nm}{\NNN\cup\{-1\}}
\newcommand{\Modz}{\Mod_{\ZZ}}
\newcommand{\testleft}{\leftarrow\!\shortmid}
\newcommand{\tim}[1]{\mathbin{\mathop{\times}\limits_{#1}}}
\begin{document}

\title{Modèle local des schémas de Hilbert-Siegel de niveau~$\Gpp$}

\author{Shinan Liu}
\address{Technishce Universität München, Zentrum Mathematik M11, Boltzmannstr. 3, 85748 Garching bei München, Deutschland/Germany}
\email{liushinan.thu@gmail.com}

\begin{abstract}
Nous construisons un modèle local pour les schémas de Hilbert-Siegel de niveau
$\Gpp$ sur $\Spec\Zq$
lorsque $p$ est non-ramifié dans le corps totalement réel,
où $q$ est le cardinal résiduel au-dessus de $p$.
Notre outil principal est une variante sur le petit site de Zariski
du complexe de Lie anneau-équivariant $\AllGv$ défini par Illusie
dans sa thèse,
où $A$ est un anneau commutatif
et $G$ est un schéma en $A$-modules.
Ce complexe nous permet de calculer le
complexe de Lie $\Fq$-équivariant d’un schéma en groupes de Raynaud, 
donc de relier
le modèle entier et le modèle local.\\

We construct a local model for 
Hilbert-Siegel moduli schemes with $\Gpp$-level bad reduction over $\Spec\Zq$,
where $p$ is a prime unramified in the totally real field
and $q$ is the residue cardinality over $p$.
Our main tool is a variant over the small Zariski site
of the ring-equivariant Lie complex $\AllGv$
defined by Illusie in his thesis,
where $A$ is a commutative ring 
and $G$ is a scheme of $A$-modules.
We use it to calculate the $\Fq$-equivariant Lie complex of a 
Raynaud group scheme,
then relate the integral model and the local model.
\end{abstract}

\maketitle

\vspace*{6pt}

\tableofcontents  

\section{Introduction}

Deux sujets sont abordés dans cet article.
Premièrement nous étudions la mauvaise réduction 
de certaines variétés de Shimura:
nous construisons un modèle entier et un modèle local
des variétés de Hilbert-Siegel de niveau $\Gpp$,
où $p$ est un nombre premier non-ramifié dans le corps totalement réel.
En même temps,
 nous définissons une variante du complexe de Lie
 anneau-équivariant d'un schéma en modules
construit par Illusie \cite{I}.
Cette redéfinition du complexe de Lie est notre outil principal 
pour la construction du modèle local,
mais nous pensons qu'elle est indépendamment intéressante,
et nous espérons qu'elle trouvera des applications
dans différents problèmes de déformation équivariante.

La mauvaise réduction des variétés de Shimura
a été étudiée par beaucoup d'auteurs.
Une attention toute particulière est portée 
aux niveaux parahoriques
parce qu'une large classe de modèles entiers 
est construite par Rapoport-Zink \cite{RZ}.
Un des problèmes centraux est alors le calcul
de la fonction zêta.
Le \emph{modèle local} est utile pour ce problème
car il possède le même faisceau de cycles proches---la
donnée locale clé de la fonction zêta.
La théorie des modèles locaux parahoriques
est établie par de Jong \cite{dJ}, Deligne-Pappas \cite{DP}, Rapoport-Zink \cite{RZ},
plus récemment par Pappas-Zhu \cite{PZ} en toute généralité 
via la théorie des groupes.

Au contraire, la théorie des modèles entiers 
et modèles locaux aux niveaux plus profonds
est moins développée à présent.
Dans le cas de niveau $\Gpp$
(c'est-à-dire que le sous-groupe de niveau est le radical pro-unipotent
d'un sous-groupe d'Iwahori),
des modèles entiers sont construits et étudiés 
par Pappas \cite{P1} pour les schémas de Hilbert-Blumenthal,
et par Haines-Rapoport \cite{HR}
pour les variétés de Shimura de type Harris-Taylor.
Plus récemment dans un travail de Haines et Stroh \cite{HS},
les auteurs ont étudié les variétés de Siegel de niveau $\Gpp$;
ils ont établi une théorie du modèle local pour ces variétés.
Les modèles locaux en niveau $\Gpp$ pour les variétés de Siegel
(et certaines variétés de Shimura unitaires)
sont aussi considérés par Shadrach \cite{Sh}.

Dans cet article, nous tentons également d'étudier des niveaux non-parahoriques.
Plus précisément nous généralisons le modèle local
de Haines-Stroh \cite{HS} aux variétés de Hilbert-Siegel.
Pour énoncer nos résultats, 
nous rappelons les modèles entiers définis dans \cite{HS}.
Soit $p$ un nombre premier, 
et $g$ un entier strictement positif.
Notons $Y_0/\Spec\Zpp$ le schéma de Siegel
de niveau $\Gp$.
Par définition $Y_0$ est le schéma de module des systèmes
$$
(A,H_1\subset...\subset H_g),
$$
où $A$ est un schéma abélien de genre $g$ (avec d'autres structures
qui garantissent la représentabilité),
et $H_i$ est un sous-groupe fini plat isotrope
de $A[p]$
tel que $\rg (H_i)=p^i$ pour tout $i$.
Dans \cite{HS}, les auteurs ont construit un modèle entier $Y_1/\Spec\Zpp$
de la variété de Siegel de niveau $\Gpp$.
Le schéma $Y_1$ est construit par la théorie d'Oort-Tate \cite{OT}:
en fait, 
les schémas en groupes $H_i/H_{i-1}$ sont de rang $p$
donc classifiés par \cite{OT}.
\footnote{La même idée est utilisée aussi dans \cite{HR} et \cite{Sh}
pour construire les modèles entiers.}

\begin{defi} \label{generateurOT}
Soit $S/\Spec\Zpp$ un schéma,
et $H/S$ un schéma en groupes d'Oort-Tate \cite{OT}.
D'après \cite{OT}, 
$H$ est isomorphe à
$$\uSpec_{\OS}(\uSym_{\OS}\LLL/(a-1)\LLL^{\otimes p})$$
pour un fibré en droites $\LLL$ sur $S$ et 
un élément $a$ de
$\Hom_{\OS}(\LLL^{\otimes p}, \LLL)$.
On dit que $(\LLL, a)$ est le \emph{paramètre d'Oort-Tate} de $H$.
On appelle un \emph{générateur d'Oort-Tate} de $H$
un élément~$z$ de $\Hom_{\OS}(\LLL,\OS)$
tel que $$z^{\otimes (p-1)}=a.$$
\end{defi}

\noindent
La définition de $Y_1$ par \cite{HS} est la suivante:
$Y_1$ 
est le schéma de module des systèmes 
$$(A,H_1\subset...\subset H_g,
z_1,...,z_g),$$
où $z_i$ est un générateur d'Oort-Tate de $H_i/H_{i-1}$.
\footnote{Il est également nécessaire de choisir des générateurs  
pour les duaux de Cartier $(H_i/H_{i-1})^{\vee}$, voir la définition~\ref{Yun}.}

Les définitions des schémas $Y_0$ et $Y_1$ se généralisent 
aux variétés de Hilbert-Siegel.
Dans ce cas,
les schémas en groupes $H_i/H_{i-1}$ sont classifiés par la théorie de Raynaud \cite{R},
et on a une définition de générateur 
similaire à la définition \ref{generateurOT}.
Pour être précis, on fixe quelques notations.
Soit $F/\QQ$ un corps totalement réel degré $n$, 
$\OF$ son anneau des entiers,
et $p$ un nombre premier \emph{non-ramifié} dans $F$.
Dans cette introduction et la plupart de cet article,
on suppose que $p$ est inerte
\footnote{Le cas non-ramifié général est traité dans \S 5.4.}
et on note $q=p^n$.
Le schéma de Hilbert-Siegel $Y_0/\Spec\Zpp$ de niveau $\Gp$ 
est défini exactement comme le précédent,
sauf qu'on suppose que $\rg H_i=q^i$
et on ajoute une  $\OF$-action sur
$A$ et $(H_i)$ avec la condition de Kottwitz.
Les schémas $H_i/H_{i-1}$ sont des schémas en $\Fq$-vectoriel de dimension $1$
et classifiés par \cite{R}.
\footnote{C'est vrai au moins sur la base $\Spec\Zq$.}
Dans la définition \ref{generateur} on va définir la notion 
de \emph{générateur de Raynaud}.
On obtient le modèle entier $Y_1/\Spec\Zq$ de niveau $\Gpp$
par le même problème de module que le paragraphe précédent,
sauf qu'ici $z_i$ est un générateur de Raynaud de $H_i/H_{i-1}$. 
Notre résultat principal sur les variétés de Shimura est le théorème suivant
(le théorème \ref{diagmodellocal}).

\begin{theo}\label{theoprin}
Avec les notations précédentes,
le schéma de Hilbert-Siegel $Y_1/\Spec\Zq$ 
admet un modèle local $M_1^+/\Spec\Zq$
défini explicitement par les définitions \ref{Mzeroplus}, \ref{Munplus}:
il existe un diagramme de modèle local
$$
Y_1 \xleftarrow{\pi'} Z_1^+ \xrightarrow{f'} M_1^+
$$
où les morphismes $\pi'$ et $f'$ sont lisses,
et $\pi'$ est surjectif.
\end{theo}

Dans la suite de cette introduction,
nous expliquons les idées pour construire  $M_1^+$ 
et démontrer le théorème \ref{theoprin}.
Comme on a remarqué 
le point clé est un complexe de Lie équivariant.
On explique d'abord comment le complexe de Lie intervient 
en considérant  le cas $F=\QQ$.
On note $M_0/\Spec\Zpp$ le modèle local de $Y_0$
défini par \cite{dJ} et \cite{RZ}.
Par la définition de $M_0$, la chaîne
$$
\omega_{A}\leftarrow \omega_{A/H_1}\leftarrow...\leftarrow\omega_{A/H_g}
$$
des algèbres de co-Lie de $A/H_i$ descend sur $M_0$.
On note $(\LLL_i, \alpha_i)$
le paramètre d'Oort-Tate de $H_i/H_{i-1}$,
 \cf la définition \ref{generateurOT}.
D'après la définition de $Y_1$,
on sait que pour construire le modèle local $M_1^+$ de $Y_1$ au-dessus de $M_0$,
il suffit de trouver le paramètre $\alpha_i$
dans le complexe $[\omega_{A/H_{i}}\to \omega_{A/H_{i-1}}]$.

L'idée de \cite{HS} pour répondre cette question
est inspirée par la théorie du degré des schémas en groupes finis plats 
de Fargues \cite{Far}.
On suppose que $S/\Spec\Zpp$ est un schéma 
et $(A,H_1\subset...\subset H_g)$ est un $S$-point de $Y_0$.
On note $H=H_i/H_{i-1}$, $B=A/H_{i-1}$ donc $B/H=A/H_i$,
et $(\LLL, a)$ le paramètre d'Oort-Tate de $H$.
On considère l'immersion fermée naturelle 
\begin{equation}\label{immersion}
i: H\hookrightarrow X=\uSpec_{\OS}(\uSym_{\OS}\LLL).
\end{equation}
On note $\III= (a-1)\LLL^{\otimes p}\subset \uSym_{\OS}\LLL$ l'idéal de $i$,
et $e$ la section neutre de $H$.
Par \cite{HS}, il existe un isomorphisme canonique de complexe
\begin{equation}\label{moddiff}
[\LLL^{\otimes p}\xrightarrow{a} \LLL]
\cong 
[e^*(\III/\III^2) \to e^*i^*\Omega_X].
\end{equation}
On note $\ell_H$ le complexe de co-Lie  de $H$ 
défini par \cite{I}.
Comme $i$ est une immersion régulière et $X$ est lisse,
par la théorie de complexe cotangent \cite{I} 
on obtient deux isomorphismes canoniques
\begin{equation}\label{lie}
[e^*(\III/\III^2) \to e^*i^*\Omega_X]
\cong\ell_H \cong
[\omega_{B/H}\to \omega_{B}]
\end{equation}
dans $\Db(\OS)$.
Le déterminant 
de complexe parfait (\cite{KM})
nous fournit,
d'après (\ref{moddiff}) et (\ref{lie}),
un isomorphisme de fibré en droites
\begin{equation}\label{det}
\det[\LLL^{\otimes p}\xrightarrow{a} \LLL]
\cong \det[\omega_{B/H}\xrightarrow{} \omega_{B}],
\end{equation}
qui commute avec les sections canoniques.
Le fibré à gauche est isomorphe à $\LLL^{\otimes (1-p)}$
avec la section canonique $a$,
donc on a trouvé le paramètre $a$ dans le complexe
$[\omega_{B/H}\to \omega_{B}]$.

Pour démontrer le théorème \ref{theoprin},
on va généraliser l'idée de \cite{HS} 
à un corps totalement réel $F$ quelconque.
On reformule la question de retrouver les paramètres de $H$
pour $F$ quelconque.
On note $n$ le degré $[F:\QQ]$.
Soit $S/\Spec\Zq$ un schéma,
et $H/S$ un schéma en groupes classifié par \cite{R},
\cf la définition \ref{groupeRaynaud}.
D'après \cite{R},
$H$ admet un paramètre 
$(\LLL_0,...,\LLL_{n-1}, a_0, ..., a_{n-1})$,
où les $\LLL_k$ sont des fibrés en droites sur $S$,
et $a_k$ appartient à $\Hom_{\OS}(\LLL_{k-1}^{\otimes p}, \LLL_k)$
pour tout $k$.
Soit 
$$
0\to H\to B \to B/H\to 0
$$
une résolution $\OF$-équivariante de $H$ par un schéma abélien 
$B/S$ muni d'une $\OF$-action.
La question est la suivante:
est-ce qu'on peut trouver les $a_k$ dans le complexe 
$[\omega_{B/H}\to \omega_{B}]$?

On essaie de généraliser l'idée de \cite{HS}.
Il se trouve que l'isomorphisme (\ref{moddiff})
se généralise bien à cette situation: 
il existe une immersion $\Fqs$-équivariante de $H$ qui généralise (\ref{immersion}),
et que les $a_k$ sont isomorphes aux \emph{composantes $\Fqs$-isotypiques}
de $[e^*(\III/\III^2) \to e^*i^*\Omega_X]$,
\cf le lemme \ref{lemmedepart}.
La difficulté est que le groupe $\Fqs$ n'agit pas naturellement 
sur $[\omega_{B/H}\to \omega_{B}]$,
donc a priori ce n'est pas évident comment trouver les $a_k$
via (\ref{lie}).

Nous proposons la stratégie suivante pour résoudre ce problème.
On note que dans (\ref{lie}) 
il existe une $\OF$-action naturelle sur $[\omega_{B/H}\to \omega_{B}]$,
et on peut considérer les composantes $\OF$-isotypiques
(car $S$ est défini sur $\Spec\Zq$).
On utilise systématiquement la théorie de complexe cotangent 
équivariant de Illusie \cite{I} VII
(on rappelle les définitions dans \S 4.2.1 avec plus de détails).
On note
$\Sfpqc$ le \emph{grand} site fpqc de $S$,
et $\otimes=\otimes_{\ZZ}$.
L'action du groupe $\Fqs$ sur $H$ définit
le complexe de Lie $\Fqs$-équivariant de $H$:
$$
\underline{\ell}_H^{\Fqs,\vee} \in 
\Db (\ZZ[\Fqs]\otimes \OO_{\Sfpqc}),
$$
où $\ZZ[\Fqs]$ est l'algèbre de groupe de $\Fqs$ sur $\ZZ$.
Il existe un isomorphisme 
$$
[e^*(\III/\III^2) \to e^*i^*\Omega_X]^{\vee}
\cong \underline{\ell}_H^{\Fqs,\vee}
$$
dans $\Db (\ZZ[\Fqs]\otimes \OO_{\Sfpqc})$,
qui induit des isomorphismes sur les composantes $\Fqs$-isotypiques.
D'autre part, la structure de $\OF$-module sur $H$
définit le complexe de Lie $\OF$-équivariant de~$H$:
$$
_{\OF}{\underline{\ell}}_H^{\vee}
\in \Db(\OF\otimes\OO_{\Sfpqc}).
$$
Il existe un autre isomorphisme
$$
_{\OF}{\underline{\ell}}_H^{\vee} \cong
[\omega_{B/H}\to \omega_{B}]^{\vee}
$$
dans $\Db(\OF\otimes\OO_{\Sfpqc})$,
qui induit des isomorphismes sur les composantes $\OF$-isotypiques.
La question de retrouver les paramètres $a_k$
dans $[\omega_{B/H}\to \omega_{B}]$
se réduit à trouver un isomorphisme entre les composantes isotypiques de 
$\underline{\ell}_H^{\Fqs, \vee}$ et 
$_{\OF}{\underline{\ell}}_H^{\vee}$.

Pour cela, on considère 
$$
_{\Fq}{\underline{\ell}}_H^{\vee}
\in \Db(\Fq\otL\OO_{\Sfpqc})
$$
le complexe de Lie $\Fq$-équivariant de $H$.
On veut définir les composantes isotypiques de $_{\Fq}{\underline{\ell}}_H^{\vee}$
et les relier aux composantes de 
$\underline{\ell}_H^{\Fqs, \vee}$ et 
$_{\OF}{\underline{\ell}}_H^{\vee}$
via les morphismes évidents
$$
\Fqs \hookrightarrow \Fq \twoheadleftarrow \OF.
$$
Or, comme $\Sfpqc$ est un grand site,
le faisceau $\OO_{\Sfpqc}$ n'est pas plat sur $\ZZ$,
donc $$\Fq\otL\OO_{\Sfpqc}$$
est un dg-anneau (\S 4.3), 
et ce n'est pas évident comment définir la notion de composante isotypique
dans $\Db(\Fq\otL\OO_{\Sfpqc})$.
\footnote{On peut bien sûr calculer $\Fq\otL\OO_{\Sfpqc}$
en utilisant la résolution  évidente
$0\to \Zq \xrightarrow{p}\Zq \to \Fq \to 0$.
Néanmoins, il ne nous semble pas facile d'en déduire
une notion de composante isotypique
qui soit compatible avec celle dans 
$\Db (\ZZ[\Fqs]\otimes \OO_{\Sfpqc})$.}
On a donc fait le choix de remplacer partout 
le grand site $\Sfpqc$ 
par  le  petit site de Zariski de $S$,
en vérifiant que les constructions de \cite{I} 
s'adaptent bien à ce nouveau cas.
\footnote{
Le choix du grand site dans \cite{I} n'est pas au hasard.
Le point est que le complexe $\AllGv$ sur le grand site
permet de décrire les déformations $A$-équivariantes de $G$,
mais avec le petit site ce n'est plus vrai.}
On obtient la proposition suivante (la proposition \ref{AlGv}),
qui est notre résultat technique clé.
\footnote{Nous croyons que cette redéfinition était déjà connue 
par Illusie.}
Le problème de définir les composantes isotypiques
de $_{\Fq}{\underline{\ell}}_H^{\vee}$ 
est alors résolu quand on prend $S=Y_0$.

\begin{prop}\label{algv}
Soit $A$ un anneau commutatif, 
$S$ un schéma \emph{plat}
\footnote{La démonstration de cette proposition sera valable
sans cette hypothèse de platitude. 
On l'ajoute ici simplement pour minimiser les discussions
sur les dg-anneaux.} 
sur $\Spec\ZZ$,
et $G$ un schéma en $A$-modules, 
plat et de présentation finie sur $S$.
On note ${\OO}_{\Sfpqc}$ (\resp $\OS$) 
l'anneau structurel du grand site fpqc
(\resp petit site de Zariski)
de $S$.
Alors la construction de \cite{I} sur le complexe 
$$\AllGv \in \Db(A\otL\OO_{\Sfpqc})$$
reste valable si l'on remplace $\OO_{\Sfpqc}$ par $\OS$.
On note
$$
\AlGv \in \Db(A\otimes\OS)
$$
l'objet obtenu par la reconstruction.
\end{prop}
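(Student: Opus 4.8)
La stratégie consiste à reprendre pas à pas la construction de \cite{I} (rappelée en \S4.2.1 et \S4.3) et à vérifier qu'à chaque étape le remplacement du gros site fpqc par le petit site de Zariski laisse tous les ingrédients inchangés; on obtient alors un objet canonique $\AlGv\in\Db(A\otimes\OS)$. On utilise d'emblée l'hypothèse de platitude de $S$ sur $\Spec\ZZ$: elle entraîne $A\otL\OS\cong A\otimes\OS$ (pas de $\Tor$ supérieur), ce qui permet de travailler avec d'honnêtes $A\otimes\OS$-modules et d'éviter toute discussion sur les dg-anneaux.

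Rappelons les deux ingrédients de la construction de \cite{I}. D'une part, le complexe de co-Lie $\LL e^{*}\LL_{G/S}$, où $e$ est la section neutre et $\LL_{G/S}$ le complexe cotangent; la platitude et la présentation finie de $G$ sur $S$ garantissent qu'il s'agit d'un complexe parfait à cohomologie quasi-cohérente. D'autre part, la résolution simpliciale standard $\ZstA$ de l'anneau $A$, dont la normalisation $\NZA$ fournit une dg-algèbre libre sur $\ZZ$ résolvant $A$. Les multiplications scalaires $[a]\colon G\to G$ ($a\in A$) fixent $e$, donc induisent des endomorphismes de $\LL e^{*}\LL_{G/S}$; la résolution $\NZA$ sert précisément à recoller ces endomorphismes en une structure de module cohérente, c'est-à-dire une structure de module sur $\NZA\otimes\OO$, qui calcule $A\otL\OO$. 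C'est ainsi que l'on obtient $\AllGv$, dont le complexe sous-jacent est $\lGv$, comme objet de $\Db(A\otL\OO_{\Sfpqc})$.

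Le point clé est alors le suivant: tous les faisceaux qui interviennent --- modules de différentielles, faisceaux conormaux, leurs images inverses le long de $e$, et tous les termes des complexes cotangents des morphismes schématiques en jeu --- sont \emph{quasi-cohérents}, et toutes les flèches structurales (faces et dégénérescences de $\ZstA$, flèches d'action $[a]$, différentielles) sont des morphismes de faisceaux quasi-cohérents. Or la catégorie $\operatorname{QCoh}(\Sfpqc)$ des modules quasi-cohérents sur le gros site fpqc est équivalente à la catégorie $\operatorname{QCoh}(\SZar)$ sur le petit site de Zariski (descente fpqc, \cf \cite{I}), et cette équivalence $j^{*}$ est monoïdale et compatible avec $\LL f^{*}$, le produit tensoriel dérivé et la formation du complexe cotangent d'un morphisme de schémas. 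Quant à $\ZstA$, elle ne dépend que de l'anneau $A$, pas du site, et la correspondance de Dold--Kan et le décalage $\Dec$ qui interviennent dans le recollement sont de nature purement combinatoire. Par conséquent chaque objet et chaque flèche de la construction de \loc se transporte le long de $j^{*}$, la structure de $\NZA\otimes\OO$-module est préservée, et $\AlGv$ s'obtient comme image de $\AllGv$, muni d'une structure de $A\otimes\OS$-module bien définie.

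La difficulté principale ne sera pas l'existence de l'objet, mais la vérification que la structure équivariante est effectivement conservée et qu'aucune propriété propre au gros site n'intervient de façon essentielle. Comme le signale la note précédant l'énoncé, l'interprétation déformationnelle de $\AllGv$ est fausse sur le petit site; on ne peut donc pas se contenter d'invoquer une propriété universelle et il faut travailler directement avec la construction explicite. Le travail consiste à isoler le contenu purement homologique et quasi-cohérent de la construction de \cite{I} --- la formation de $\LL e^{*}\LL_{G/S}$ et sa structure de module sur $\NZA\otimes\OO$ --- de son contenu déformationnel, puis à confirmer, par une vérification diagrammatique, que chaque flèche reste parmi les faisceaux quasi-cohérents et leurs morphismes, donc se restreint. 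Ce point acquis, l'indépendance du site résulte formellement de l'équivalence monoïdale $j^{*}$.
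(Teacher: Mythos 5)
Votre stratégie --- transporter $\AllGv$ le long d'une équivalence $\operatorname{QCoh}(\Sfpqc)\simeq\operatorname{QCoh}(\SZar)$ --- est réellement différente de celle du texte, mais elle laisse deux trous essentiels. Premièrement, la description de la construction d'Illusie sur laquelle repose votre plan (la «reprendre pas à pas») est inexacte : recoller les endomorphismes $[a]\colon G\to G$, $a\in A$, le long d'une résolution de $A$ ne peut encoder que l'action du monoïde multiplicatif $\Am$, c'est-à-dire le complexe $\ell^{\Am,\vee}_G$ (comparer les deux est précisément l'objet de la proposition \ref{compatibilite}, et ce n'est pas une tautologie). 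Toute la machinerie de \cite{I} VI--VII sert à encoder de façon cohérente la structure \emph{additive} de $A$ : la construction passe par le nerf $\Ner(\CCd A(1),\CCd G(1))$ et par deux théorèmes de descente, l'un pour $\nerd$ (le théorème \ref{thmillusie}), l'autre pour $\CCCC$ (la proposition \ref{Cun}). Les étapes que vous proposez de transporter ne sont donc pas celles de la construction réelle.

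Deuxièmement, l'obstruction véritable au remplacement du gros site par le petit n'est jamais abordée : le théorème de descente d'Illusie pour le nerf est un énoncé de topos fibrés dont les transitions utilisent $f^{-1}$, tandis que les catégories de modules sur le petit site de Zariski (définition \ref{ModX}) utilisent $f^{*}$, et $f^{-1}\neq f^{*}$ sur le petit site (ils coïncident sur le gros site, raison pour laquelle la construction d'Illusie y marche telle quelle). C'est pour franchir ce pas que le texte introduit le site des isomorphismes locaux en \S 4.6 et en déduit la proposition \ref{nerdfidele}. Votre transport ne dispense pas de ce travail : la proposition affirme l'existence \emph{et} l'unicité de $\AlGv$, caractérisé par $\nerd(\CCd{\AlGv}(1))\cong\ell^{\vee}_{\Ner(\CCd A(1),\CCd G(1))}$ \emph{sur le petit site}, et toutes les applications ultérieures (propositions \ref{Glisse}, \ref{triangle}, \ref{Res}, \ref{compatibilite}) invoquent la pleine fidélité de $\nerd$ et de $\CCCC$ zariskiens. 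Transporter un objet du gros site fournit un objet, mais ni ces énoncés de pleine fidélité, ni le fait que l'objet transporté vérifie la caractérisation zariskienne --- à moins de démontrer que votre équivalence entrelace $\nerd$, $\CCCC$ et les complexes de Lie des nerfs des deux côtés, ce qui revient à refaire les propositions \ref{Cun} et \ref{nerdfidele}. Enfin, l'équivalence $\operatorname{QCoh}$ que vous invoquez est abélienne ; il vous la faudrait au niveau des catégories dérivées de dg-modules sur $P_{\centerdot}\otimes\OO_{\Sfpqc}$ (rappelons que $\OO_{\Sfpqc}$ n'est \emph{pas} plat sur $\ZZ$ même si $S$ l'est, donc le gros côté reste un dg-anneau) et des catégories de diagrammes sur le nerf, avec compatibilité à $\otL$, aux duaux et aux complexes cotangents de diagrammes ; rien de cela n'est disponible tel quel, et c'est là que se concentre la difficulté. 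Seule votre remarque initiale --- la platitude donne $A\otL\OS\cong A\otimes\OS$ --- recoupe effectivement le texte (lemme \ref{ALS}).
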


On obtient aussi la proposition suivante 
(la proposition \ref{compatibilite})
sur la compatibilité des complexes de Lie équivariants.
Avec la proposition \ref{algv},
elle nous permet de relier les composantes de 
${\ell}_H^{\Fqs, \vee}$,
$_{\Fq}{{\ell}}_H^{\vee}$
et $_{\OF}{{\ell}}_H^{\vee}$,
donc répondre la question de trouver les paramètres de Raynaud,
\cf la proposition \ref{compatibiliteRaynaud},
le corollaire \ref{Raynaudabelien}.

\begin{prop}\label{compa}
Dans le contexte de la proposition \ref{algv},
on note $\Am$ le monoïde multiplicatif sous-jacent de $A$,
et ${\ell}_G^{\Am,\vee}$ le complexe de Lie $\Am$-équivariant de $G$.
On considère le foncteur
$$
\Com: \Db(A\otimes\OS)\to \Db(\ZZ[\Am]\otimes\OS)
$$
induit par le morphisme évident
$\ZZ[\Am]\to A$.
Alors il existe un isomorphisme canonique
$$
\Com(\AlGv)\cong {\ell}_G^{\Am,\vee}
$$
dans $\Db(\ZZ[\Am]\otimes\OS)$.
\end{prop}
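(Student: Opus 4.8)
Le plan est de comparer les deux complexes de Lie équivariants au niveau des résolutions simpliciales qui servent à les rigidifier, puis de conclure par la conservativité du foncteur d'oubli. Rappelons le principe commun aux deux constructions. Pour un schéma en groupes commutatif $G/S$, le complexe de co-Lie $\ell_G^{\vee}=\LL e^*\LL_{G/S}$ est naturellement un complexe de $\OS$-modules. Une structure supplémentaire sur $G$---qu'il s'agisse de l'action multiplicative du monoïde $\Am$ ou de la structure de $A$-module tout entière---se traduit par une action sur $\ell_G^{\vee}$, a priori seulement à homotopie près; la machinerie simpliciale d'Illusie \cite{I} VII, adaptée au petit site par la proposition \ref{algv}, rigidifie cette action. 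On obtient ainsi $\ell_G^{\Am,\vee}$ dans $\Db(\ZZ[\Am]\otimes\OS)$ et $\AlGv$ dans $\Db(A\otimes\OS)$, tous deux ayant $\ell_G^{\vee}$ pour complexe sous-jacent dans $\Db(\OS)$.

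Comme $S$ est plat sur $\Spec\ZZ$, les anneaux $\ZZ[\Am]\otimes\OS$ et $A\otimes\OS$ ne sont pas dérivés et $\ZZ[\Am]\otimes\OS\to A\otimes\OS$ est un morphisme d'anneaux usuel; le foncteur $\Com$ est la restriction des scalaires le long de ce morphisme. Le point essentiel est que la structure de $A$-module de $G$, lorsqu'on oublie l'addition, redonne exactement l'action du monoïde multiplicatif $\Am$: un élément $[a]\in\ZZ[\Am]$ agit par l'endomorphisme d'image $a$ dans $A$, en accord avec le morphisme d'algèbres $\ZZ[\Am]\to A$. Je traduirais ce fait au niveau des résolutions, en comparant la construction en barres de l'action de $\Am$ à la résolution standard $\ZstA$ employée par \cite{I}, de manière compatible avec $\ZZ[\Am]\to A$. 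En appliquant le foncteur de co-Lie et en restreignant les scalaires, on en déduit un morphisme canonique $\Com(\AlGv)\to\ell_G^{\Am,\vee}$ dans $\Db(\ZZ[\Am]\otimes\OS)$ qui se réduit à l'identité de $\ell_G^{\vee}$ sur les complexes sous-jacents. Or le foncteur d'oubli $\Db(\ZZ[\Am]\otimes\OS)\to\Db(\OS)$ est exact, préserve la cohomologie, et est donc conservatif; notre morphisme, étant l'identité sur $\ell_G^{\vee}$, est par suite un isomorphisme, d'où l'isomorphisme canonique annoncé.

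La principale difficulté réside dans l'étape de comparaison des deux résolutions simpliciales. Il faut dérouler explicitement la construction de \cite{I}---la résolution standard $\ZstA$ et son complexe normalisé $\NZA$---et vérifier qu'après restriction des scalaires le long de $\ZZ[\Am]\to A$ elle s'identifie, au niveau simplicial, à la construction en barres de l'action du monoïde $\Am$. C'est là que la description précise de la structure équivariante d'Illusie est indispensable. Une fois cette identification acquise, le passage au complexe de co-Lie et à la catégorie dérivée est formel, et l'hypothèse de platitude de $S$ garantit l'absence de toute subtilité de dg-anneau.
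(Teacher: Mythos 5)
The conservativity step at the end of your argument is sound: restriction of scalars along $\ZZ[\Am]\otimes\OS$ does not change the underlying complex, so any morphism $\Com(\AlGv)\to{\ell}_G^{\Am,\vee}$ in $\Db(\ZZ[\Am]\otimes\OS)$ lifting the identity of $\ell_G^{\vee}$ would automatically be an isomorphism. But constructing that morphism \emph{is} the entire content of the proposition, and your proposal defers it to an unexecuted « comparaison des résolutions »; this is a genuine gap, not a technicality. Worse, the comparison you sketch (bar construction of the $\Am$-action versus the stabilized resolution $\ZstA$) misplaces where the two structures actually live: in this paper both equivariant complexes are rigidified through the nerve functor $\nerd$ --- ${\ell}_G^{\Am,\vee}$ is characterized by $\nerd({\ell}_G^{\Am,\vee})\cong\ell^{\vee}_{\Ner(\Am,G)}$ (proposition \ref{lMKv}) and $\AlGv$ by $\nerd(\CCd{\AlGv}(1))\cong\ell^{\vee}_{\Ner(\CCd A(1),\CCd G(1))}$ (proposition \ref{AlGv}) --- whereas $\ZstA$ only intervenes in the descent from $\ZZ[\CCd A(1)]$ to $A$, which is already packaged inside the functor $\CCCC$. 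So the object you would need to compare against is a nerve, not a bar resolution of $\ZZ[\Am]$-modules.

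The paper's proof supplies exactly the missing construction, and does so without ever building a morphism « by hand »: taking the vertex $r=0$ of $\CC_0 S(1)=\bigsqcup_{r}S(1)^r$ yields a morphism $i_S\colon S\to\CCd S(1)$, equivariant for the $\Am$-action on $S$ and the $\CCd A(1)$-action on $\CCd S(1)$, hence a morphism of nerves $i_S\colon\Ner(\Am,S)\to\Ner(\CCd A(1),\CCd S(1))$. The two structural facts are then that $\Com=i_S^*\circ\CCCC$ and that $i_S^*$ commutes with $\nerd$. By full faithfulness of $\nerd$ (proposition \ref{nerdfidele}) the statement reduces to an isomorphism $i_S^*\,\ell^{\vee}_{\Ner(\CCd A(1),\CCd G(1))}\cong\ell^{\vee}_{\Ner(\Am,G)}$ in $\Db(\Ner(\Am,S))$, and this is precisely base change for the Lie complex (\cite{I} VII 3.1.1.4) applied to the cartesian square with vertices $\Ner(\Am,G)$, $\Ner(\CCd A(1),\CCd G(1))$, $\Ner(\Am,S)$, $\Ner(\CCd A(1),\CCd S(1))$. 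To turn your plan into a proof you would have to identify these three ingredients --- the vertex inclusion $i_S$, the identity $\Com=i_S^*\circ\CCCC$, and base change along the resulting cartesian square; without them, the « canonical morphism » your conservativity argument is applied to has simply not been defined.
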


Quelques remarques sur le complexe $\AllGv$ d'Illusie.
Une application principale de $\AllGv$ est de décrire 
les déformations d'un groupe de Barsotti-Tate,
\cf \cite{I2}.
La définition de $\AllGv$ dans \cite{I} est 
non-triviale et très compliquée:
elle dépend de façon essentielle
d'une idée topologique.
Cette idée est formulée dans \cite{I} VI
(voir aussi \S 4.2.3 de cet article)
et appelée la «stabilisation d'un foncteur non-additif»,
qui a été utilisée pour calculer
la homologie de Hochschild topologique,
\cf l'article de Breen \cite{B}.

Voici le plan de cet article.
Dans \S 2 nous rappelons les définitions des schémas $Y_0$ et $M_0$.
Dans \S 3 nous définissons le schéma $Y_1$ via la théorie de Raynaud.
Dans \S 4 nous démontrons les propositions \ref{algv} et \ref{compa}.
Nous rappelons dans \S 4.2 les définitions des complexes équivariants d'Illusie,
et nous expliquons pourquoi on peut les généraliser au site de Zariski.
Enfin, dans \S 5 nous rappelons le déterminant d'un complexe parfait
et démontrons le théorème \ref{theoprin}.

\begin{reme}
Je remercie mon directeur de thèse Benoît Stroh pour m'avoir proposé ce problème, 
et pour les discussions et son soutien depuis longtemps.
Je remercie sincèrement Monsieur Luc Illusie pour son attention à ce travail
et son encouragement.
Je remercie aussi Sophie Morel d'avoir corrigé une démonstration
dans une version préliminaire.
\end{reme}

\section{Schémas de Hilbert-Siegel de niveau $\Gp$}

\subsection{Le schéma $Y_0$}

Dans cette section, on définit le schéma de Hilbert-Siegel
de niveau $\Gp$.
Fixons d'abord les notations.
Soit $n$ un entier positif.
Soit $F$ un corps de nombres totalement réel de degré $n$
sur $\QQ$.
On note $\OF$ l'anneau des entiers de $F$.
Soit $p$ un nombre premier, non-ramifié dans $F$. 
On suppose d'abord que $p$ est \emph{inerte},
avec le cas général traité en \S 5.4.
Soit $g\geq 1$ un autre entier.

On note
$\langle \ ,\ \rangle: F^{2g} \times F^{2g} \to F$ 
la forme symplectique standard sur $F^{2g}$.
Elle est normalisée par 
$\langle e_i,e_{2g+1-j}\rangle=\delta_{i,j}$ 
pour tout $i,j\in\{1,...,2g\}$,
où $e_i$ est le $i$-ième élément dans la base canonique de $F^{2g}$.
Le groupe de similitudes symplectiques~~$\GSp_{2g, F}$ 
est un groupe algébrique sur $F$,
tel que pour toute $F$-algèbre $R$,
$$\GSp_{2g, F}(R)=
\{(h,c)\in \GL_{2g}(R)\times R^{\times}
|\langle hv ,hw \rangle =c \langle v,w \rangle,
\forall v, w \in R^{2g}\}.
$$
On définit le sous-groupe algébrique $G/\QQ$
de $\Res_{F/\QQ}\GSp_{2g,F}$ par
$$
G=\{(h,c)\in \Res_{F/\QQ}\GSp_{2g,F} |\ c \in \GG_{m,\QQ} \}.
$$

Soit $S$ un schéma, et $A,A'$ des schémas abéliens sur $S$. 
On appelle une quasi-isogénie $f: A\to A'$ au-dessus de $S$
une \emph{$\Zps$--isogénie}
si le degré de $f$ appartient à $\Zps$. 
Lorsque $A'=A^{\vee}$ est le dual de $A$,
on dit que $f$ est une \emph{$\Zps$--polarisation}
s'il existe de plus $N\in\Nn^*$ tel que $Nf$ est une polarisation.
On note que si $f$ est une $\Zps$-isogénie, 
alors elle induit un isomorphisme de schémas en groupes
$f: A[p]\to A'[p]$
sur les $p$-torsions.
On note $\End(A)$ l'anneau des endomorphismes de $A$.
On note $\omega_A=e^*\Omega_{A/S}$ l'algèbre de co-Lie de $A$,
où $e$ la section unité.

On note $(\text{Sch}/\Zp)$ (\resp $(\text{Ens})$) la catégorie des schémas sur $\Zp$ (\resp des ensembles).
On note $\mathbb{A}_f^{p}$ les adèles finies de $\QQ$ hors de $p$, 
et on fixe un sous-groupe ouvert compact $K^p$ de $G(\mathbb{A}_f^{p})$ suffisamment petit.
Dans cet article on utilise la notation $\otimes$ (\resp $\Hom$)
pour le bi-foncteur $\otimes_{\ZZ}$ (\resp $\Hom_{\ZZ}$),
sauf précisé autrement.

\begin{defi} \label{Yzero}

On définit le foncteur $Y_0:(\text{Sch}/\Zp)\to (\text{Ens})$, 
tel que pour tout schéma $S/\Zp$,
$Y_0(S)$ est égal à
l'ensemble des classes d'isomorphismes de 
$(A,\lambda,\iota,\bar{\eta},H_{\centerdot})$, où
\begin{enumerate}[label=--]

\item $A/S$ est un schéma abélien de genre $ng$;

\item $\lambda: A\to A^{\vee}$ est une $\Zps$--polarisation;

\item $\iota: \OF\otimes\Zp \to \End(A)\otimes \Zp$ 
est un morphisme d'anneau,
tel que pour tout~$a$ qui appartient à $\OF\otimes\Zp$, 
$\iota(a)$ commute avec $\lambda$. 
On suppose que localement sur~$S$, 
$\omega_A$ est un $\OF\otimes\OS$ -module libre
(avec l'action de $\OF$ induite par $\iota$).

\item $\bar{\eta}$ est une $K^p$-structure de niveau au sens de Kottwitz.
Plus précisément 
$\bar{\eta}$ est une $K^p$-orbite de similitudes symplectiques
$F$-linéaires $\eta: F^{2g}\otimes\Afp \to H_1^{et}(A_{\bar{s}}, \Afp)$ 
qui est aussi $\pi_1(S, \bar{s})$-invariante.
Ici $\bar{s}$ est un point géométrique de~~$S$,
et $H_1^{et}(A_{\bar{s}}, \Afp)$ est muni la forme symplectique
induite par $\lambda$.

\item $H_{\centerdot}=(H_1\subseteq...\subseteq H_g)$
est un drapeau de 
sous-groupes finis plats de $A[p]$ stables par l'action de $\OF$, 
tel que 
$\rg H_i=p^{ni}$ pour tout $i$,
et $H_g$ est $\lambda$-isotrope.
On suppose que localement sur $S$, 
$\omega_{A/H_i}$ est un $\OF\otimes\OS$ -module libre 
pour tout~~$i$.

\end{enumerate}
 
Un isomorphisme $f$ entre 
$(A,\lambda,\iota,\bar{\eta},H_{\centerdot})$ et 
$(A',\lambda',\iota',\bar{\eta'},H'_{\centerdot})$
est une $\Zps$--isogénie $f: A\to A'$, telle que

\begin{enumerate}[label=--]

\item sur chaque composante connexe de $S$,
on a $\lambda=r\cdot f^{\vee}\lambda' f$ pour une constante $r\in\Zps$;

\item pour tout $a\in\OF\otimes\Zp$,
$f\iota(a)=\iota'(a)f$;

\item $f_*\bar{\eta}=\bar{\eta'}$;

\item pour tout $i$,
$f: H_i \to H'_i$ 
est un isomorphisme de schémas en groupes.

\end{enumerate}

\end{defi}

Le foncteur $Y_0$ est défini dans \cite{RZ} definition 6.9.
Ici on utilise aussi le langage de \cite{L}.
On remarque que dans \cite{RZ},
la condition imposée sur $\omega_A$ (\resp $\omega_{A/H_i}$)
est la condition de déterminant de Kottwitz
(\cite{K} \S 5),
mais elle est équivalente à notre définition.
D'après \cite{RZ}, \cite{K},
le foncteur $Y_0$
est représentable par un schéma quasi-projectif sur $\Zp$.
On le note aussi par $Y_0$.
La fibre générique de $Y_0$ est la variété de Shimura 
pour le groupe $G$
de niveau $K_pK^p$
avec $K_p$ le sous-groupe d'Iwahori standard de 
$G(\QQ_p)\subset \GSp_{2g}(\QQ_q)$,
où $\QQ_q$ est l'extension non-ramifiée de $\QQ_p$
de degré $n$.
On appelle donc $Y_0$ 
le schéma de Hilbert-Siegel de niveau $\Gp$.

\subsection{Le modèle local $M_0$}
\subsubsection{}
Nous rappelons d'abord la notion de 
chaîne polarisée définie dans \cite{RZ} definition 3.14
pour les variétés de Hilbert-Siegel.
On donne une définition équivalente,
qui est similaire à la définition dans \cite{dJ}
pour les variétés de Siegel.
Soit $S$ un schéma sur $\Zp$.
Pour tout $\OS$-module $M$ localement libre de type fini,
on note $M^{\vee}$ le $\OS$-module $\underline{\Hom}_{\OS}(M, \OS)$.

\begin{defi} \label{chaine}
On appelle une 
chaîne de $\OF\otimes\OS$-modules polarisée,
ou simplement une \emph{chaîne polarisée} sur $S$,
la donnée
$$
M_{\centerdot}=(M_0 \leftarrow M_1 \leftarrow \ldots \leftarrow M_g, 
q_0, q_g),
$$
où 
\begin{enumerate}[label=--]
\item Pour $i=0,...,g$, 
$M_i$ est un faisceau de $\OF\otimes\OS$-modules
qui est localement sur $S$ libre de rang $2g$,
et le morphisme $M_{i-1} \leftarrow M_{i}$ est $\OF\otimes\OS$-linéaire.

\item Pour $j=0$ ou $g$, $q_j: M_j \times M_j \to \OS$ est une forme alternée parfaite, et $q_j(ax, y)=q_j(x, ay)$ pour tout $a\in \OF$ et $x,y\in M_j$.

\item Pour $j=0$ ou $g$, en identifiant $M_j$ avec $M_j^{\vee}$ via $q_j$, on obtient une chaîne $2g$-périodique 
$$
\ldots \leftarrow M_g \simeq M_g^{\vee} \leftarrow \ldots \leftarrow
M_0^{\vee} \simeq M_0 \leftarrow M_1 \leftarrow \ldots \leftarrow 
M_g \leftarrow \ldots
$$
On suppose que la composée des flèches dans chaque période est égale à 
la multiplication par $p$.

\item Pour $i=1,...,g$, localement sur $S$ le $\OF/p \otimes \OS$-module $M_{i-1}/M_{i}$ est libre de rang $1$.
\end{enumerate}
\end{defi}

\begin{exem} \label{chainestandard}
La chaîne polarisée standard sur $\Zp$ est la chaîne
$$
St_{\centerdot}=(St_0 \leftarrow St_1 \leftarrow \ldots \leftarrow St_g, q_0, q_g),
$$
où $St_i=\OF^{2g}\otimes\Zp$ pour tout $i$,
et pour $i=1,...,g$ 
la flèche $St_{i-1} \leftarrow St_{i}$ est définie par la matrice
$\diag(1,...,p,...,1)$ avec $p$ en la $i$-ième place.
On définit $q_0=q_g=\tr_{F/\QQ}\langle \ ,\ \rangle$,
qui est une forme alternée parfaite sur $\OF^{2g}\otimes\Zp$.
Pour tout schéma $S$ sur $\Zp$, 
on définit la chaîne polarisée standard sur $S$
par le produit tensoriel $St_{\centerdot}\otimes \OS$.
\end{exem}

Nous proposons un analogue de \cite{dJ} proposition 3.6 
sur la forme standard d'une chaîne polarisée arbitraire. 
Cette proposition est démontrée
en toute généralité par  \cite{RZ} théorème 3.16, 
en supposant $p$ localement nilpotent
sur $S$.
Cette dernière condition est enlevée par \cite{P2} theorem 2.2 
et \cite{H} theorem 6.3.

\begin{prop} \label{chainenormalisee}
Soit $M_{\centerdot}$ une chaîne polarisée sur $S$.
Alors localement sur $S$ pour la topologie de Zariski,
$M_{\centerdot}$ est isomorphe à $St_{\centerdot}\otimes\OS$.
De plus, le groupe d'automorphisme de $St_{\centerdot}$ 
est un schéma en groupes lisse sur $\Zp$.
\end{prop}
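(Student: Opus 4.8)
Le plan est de se ramener à une question d'algèbre commutative sur l'anneau local $R=\OO_{S,s}$ en un point $s$ de $S$: on construira un isomorphisme explicite $St_{\centerdot}\otimes R\cong M_{\centerdot}$, puis on le propagera à un voisinage de Zariski de $s$ (un isomorphisme entre objets de présentation finie sur la fibre générique du trait local s'étend à un voisinage), ce qui donnera la trivialité Zariski-locale annoncée. Sur $R$ les $M_i$ sont des $\OF\otimes R$-modules libres de rang $2g$, les quotients $M_{i-1}/M_i$ sont libres de rang $1$ sur $\OF/p\otimes R=\Fq\otimes R$, et les formes $q_0,q_g$ sont alternées parfaites et $\OF$-équivariantes. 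La construction se fait en deux temps: normaliser d'abord la chaîne de modules, standardiser ensuite les formes. La seconde assertion (lissité de $\underline{\operatorname{Aut}}(St_{\centerdot})$) sera traitée séparément par un critère infinitésimal.

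Premier temps (normalisation de la chaîne). Comme $p$ est non-ramifié, $\OF$ est étale sur $\ZZ$ en $p$, de sorte que $\tr_{F/\QQ}$ identifie chaque $q_j$ à une forme alternée $\OF\otimes R$-valuée parfaite; on peut donc raisonner $\OF\otimes R$-linéairement. En utilisant que $M_{i-1}/M_i$ est libre de rang $1$ sur $\Fq\otimes R$, on choisit des générateurs que l'on relève par le lemme de Nakayama en une base de $M_0$ adaptée aux flèches de transition, c'est-à-dire telle que l'image de $M_i$ soit engendrée par les vecteurs standard où l'on a inséré un facteur $p$ aux crans voulus. La périodicité $2g$-périodique, dont la composée sur une période vaut la multiplication par $p$, impose la position des facteurs $p$ et force la forme $\diag(1,\dots,p,\dots,1)$ des flèches.

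Deuxième temps (standardisation des formes) et lissité. La chaîne étant normalisée, on met $q_0$ puis $q_g$ sous forme standard par un procédé de Gram-Schmidt symplectique mené sur l'anneau $A=\OF\otimes R$: la perfection de $q_0$ entraîne que, pour tout vecteur de base $e$, la forme linéaire $q_0(e,-)\colon M_0\to A$ est unimodulaire, donc il existe $f$ avec $q_0(e,f)=1$; on scinde le plan hyperbolique $\langle e,f\rangle$ et on recommence sur l'orthogonal, le tout de façon $\OF$-équivariante. Pour la lissité de $G=\underline{\operatorname{Aut}}(St_{\centerdot})$, je vérifierais le critère de relèvement infinitésimal: pour une extension de carré nul $R\twoheadrightarrow R_0$, un point $(\bar g_i)\in G(R_0)$ se relève car chaque $St_i$ est libre et les conditions (commuter aux flèches, préserver $q_0,q_g$ à un facteur de similitude commun près) se relèvent sans obstruction, le problème se réduisant à la lissité de $\GL$, de $\GSp$ et des groupes vectoriels.

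Le point délicat sera de mener les deux temps \emph{simultanément}: il faut que la base symplectique respecte à la fois l'action de $\OF$, la filtration par la chaîne et sa périodicité, donc choisir les couples hyperboliques à l'intérieur des crans successifs $M_i$ de manière cohérente; c'est là que l'argument de \cite{dJ} est véritablement utilisé. Une subtilité supplémentaire tient à ce que, $p$ n'étant supposé ni inversible ni nilpotent sur $R$, les flèches $M_i\to M_{i-1}$ peuvent n'être pas injectives: c'est précisément pour contourner cette difficulté que l'on invoque les raffinements de \cite{P2} et \cite{H}. Enfin, si l'on se contente d'une trivialité \emph{étale}-locale, on peut remplacer la construction explicite par l'observation que $\underline{\operatorname{Isom}}(St_{\centerdot}\otimes\OS,M_{\centerdot})$ est un torseur lisse sous $G$; la trivialité Zariski-locale en découle encore, $G$ étant spécial au sens de Serre (extension itérée de facteurs $\GL$, d'un facteur $\GSp$ et d'un groupe unipotent vectoriel).
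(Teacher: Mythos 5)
Votre proposition tente une démonstration directe d'un énoncé que l'article, lui, ne démontre pas du tout~: la proposition \ref{chainenormalisee} y est justifiée uniquement par renvoi à la littérature (analogue de \cite{dJ} proposition 3.6, démontrée par \cite{RZ} théorème 3.16 lorsque $p$ est localement nilpotent, hypothèse levée par \cite{P2} theorem 2.2 et \cite{H} theorem 6.3). Le problème est que votre preuve directe présente une lacune réelle, concentrée exactement là où réside la difficulté~: la construction d'une base compatible \emph{simultanément} avec l'action de $\OF$, la filtration, la périodicité et les deux formes, sur un anneau local où $p$ n'est ni inversible ni nilpotent (ni même nécessairement non diviseur de zéro, d'où la non-injectivité possible des flèches de transition). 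Vous reconnaissez ces deux points durs, mais vous les «~contournez~» en invoquant \cite{P2} et \cite{H}~--- or ces théorèmes \emph{sont} l'énoncé à démontrer dans toute sa généralité~; l'argument devient circulaire~: soit on cite ces références (c'est ce que fait l'article, et alors le procédé de Gram--Schmidt et le lemme de Nakayama sont superflus), soit on ne les cite pas, et votre réduction au cas local ne franchit pas l'obstacle.

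L'argument de repli par torseurs n'est pas correct non plus tel quel. D'une part, la spécialité au sens de Serre est une notion pour les groupes algébriques \emph{sur un corps}~; le dévissage que vous décrivez (extension itérée de facteurs $\GL$, $\GSp$ et unipotents) n'existe que fibre à fibre, et différemment sur les deux fibres~--- le groupe $\operatorname{Aut}(St_{\centerdot})$ est réductif sur $\Qp$ mais ne l'est pas sur $\Fp$~--- donc il n'y a pas de telle filtration par des sous-schémas en groupes \emph{sur} $\Zp$, et la spécialité fibre à fibre ne donne pas la trivialité d'un torseur sur un anneau local non hensélien (un schéma lisse affine muni d'un point rationnel dans la fibre fermée n'admet pas de section en général)~: c'est précisément l'écart entre trivialité étale-locale et Zariski-locale que \cite{P2} doit combler. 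D'autre part, la trivialité étale-locale dont part cet argument, ainsi que la lissité de $\operatorname{Aut}(St_{\centerdot})$, sont elles-mêmes le contenu non trivial de \cite{RZ} théorème 3.16~: le relèvement infinitésimal d'un automorphisme doit commuter aux flèches de transition, qui dégénèrent modulo $p$, et préserver les formes~; l'annulation des obstructions correspondantes ne se réduit pas formellement à la lissité de $\GL$, de $\GSp$ et des groupes vectoriels~--- votre «~se relèvent sans obstruction~» est une affirmation, non une preuve. En résumé, la voie correcte (et celle de l'article) est d'invoquer directement \cite{RZ}, \cite{P2} et \cite{H}~; toute preuve autonome exigerait de refaire sérieusement leur travail.
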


\subsubsection{}
Nous définissons le modèle local $M_0$
et son diagramme de modèle local.
Soit $(A,\lambda,\iota,\bar{\eta},H_{\centerdot})$ 
un $S$-point de  $Y_0$.
On obtient une chaîne de schémas abéliens sur $S$ avec $\OF$-action
$$
A \to A/H_1 \to \ldots \to A/H_g.
$$
On note $H_0=0$.
Pour tout $i$,
on note $\mathcal{H}^1(A/H_i)$ la première cohomologie de de Rham
de $A/H_i$.
Comme $\lambda: A\to A^{\vee}$
est une $\Zp^{\times}$-polarisation,
elle 
induit un isomorphisme 
$\mathcal{H}^1(A) \xleftarrow{\sim} \mathcal{H}^1(A^{\vee})$
de $\OF\otimes\OS$-module.
L'argument de \cite{dJ} $\S$2 montre que 
pour $j=0$ et $g$,
$\lambda$ définit une forme alternée parfaite
$q_j$ sur $\mathcal{H}^1(A/H_j)$.

\begin{lemm} \label{chaineHun}
La chaîne 
$$
\mathcal{H}^1(A/H_{\centerdot})= 
(\mathcal{H}^1(A) \leftarrow \mathcal{H}^1(A/H_1)
\leftarrow \ldots \leftarrow \mathcal{H}^1(A/H_g),
q_0, q_g
)
$$
est une chaîne polarisée sur $S$.
\end{lemm}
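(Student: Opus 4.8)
Le plan est de vérifier directement les quatre conditions de la définition \ref{chaine} pour la chaîne $\mathcal{H}^1(A/H_\centerdot)$, en suivant l'argument de \cite{dJ} \S 2 tout en gardant trace de l'action de $\OF$. On pose $M_i=\mathcal{H}^1(A/H_i)$ et on note $\psi_i: A/H_{i-1}\to A/H_i$ l'isogénie $\OF$-équivariante de noyau $H_i/H_{i-1}$, de sorte que la flèche de transition $M_i\to M_{i-1}$ est $\psi_i^*$; comme $\psi_i$ est $\OF$-équivariante, $\psi_i^*$ est $\OF\otimes\OS$-linéaire, ce qui donne la linéarité de la première condition. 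Pour la liberté locale de rang $2g$ sur $\OF\otimes\OS$, on utilise la filtration de Hodge
$$
0\to \omega_{A/H_i}\to M_i\to \operatorname{Lie}((A/H_i)^{\vee})\to 0.
$$
Par les hypothèses de la définition \ref{Yzero}, $\omega_{A/H_i}$ est localement libre de rang $g$ sur $\OF\otimes\OS$; le quotient $\operatorname{Lie}((A/H_i)^{\vee})=\omega_{(A/H_i)^{\vee}}^{\vee}$ l'est aussi, grâce à la condition de déterminant de Kottwitz et à la $\Zps$-polarisation qui relie $A/H_i$ et son dual. Ici l'hypothèse que $p$ est non ramifié est essentielle: $\OF\otimes\Zp$ est étale sur $\Zp$ de différente inversible, donc la dualité $\OS$-linéaire et les extensions préservent la liberté locale sur $\OF\otimes\OS$. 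Comme extension de deux modules localement libres de rang $g$, $M_i$ est donc localement libre de rang $2g$.

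Pour les formes (deuxième et troisième conditions), les accouplements alternés parfaits $q_0,q_g$ sont déjà construits par l'argument de \cite{dJ} \S 2 rappelé ci-dessus; il reste à vérifier la symétrie $q_j(ax,y)=q_j(x,ay)$ pour $a\in\OF$. Celle-ci résulte de ce que $\iota(a)$ commute avec $\lambda$, jointe à la compatibilité canonique de l'accouplement de de Rham avec les endomorphismes: l'involution de Rosati attachée à $\lambda$ est l'identité sur $\OF$ puisque $F$ est totalement réel. Pour achever la troisième condition, on identifierait $M_j\cong M_j^{\vee}$ via $q_j$ (pour $j=0,g$) et on prolongerait la chaîne en une chaîne $2g$-périodique, puis on montrerait que la composée sur une période est la multiplication par $p$. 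En dualisant via $q_0$ et $q_g$, la demi-période inférieure devient la chaîne des isogénies duales qui, transportée par la $\Zps$-polarisation $A\cong A^{\vee}$, réalise la composée $A\to A/H_g\to A$; comme $H_g$ est un sous-groupe isotrope maximal (lagrangien) de $A[p]$, on a $\psi^{\vee}\bar\lambda\psi=p\lambda$, donc cette composée est $[p]$, et $[p]^*=p$ sur $\mathcal{H}^1$. La normalisation de $q_0,q_g$ dans \cite{dJ} est précisément ce qui rend la composée de période égale à $p$, et non seulement à un multiple inversible près.

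Enfin, pour la quatrième condition, on calculerait le quotient successif $M_{i-1}/M_i=\operatorname{coker}(\psi_i^*)$. Comme $H_i/H_{i-1}$ est tué par $p$, ce conoyau l'est aussi, de sorte que $M_{i-1}/M_i$ est un $\OF/p\otimes\OS$-module; il s'identifie à la réalisation de de Rham (le cristal de Dieudonné) du schéma en groupes de Raynaud $H_i/H_{i-1}$, qui est un schéma en $\OF/p$-espaces vectoriels de dimension $1$ et d'ordre $q$. Un décompte de rangs (le degré de $\psi_i$ vaut $q=p^n$), joint à la liberté de $\omega_{A/H_{i-1}}$ et $\omega_{A/H_i}$ sur $\OF\otimes\OS$, montre que ce module est localement libre de rang $1$ sur $\OF/p\otimes\OS$. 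La principale difficulté sera vraisemblablement double: transporter de façon cohérente la structure de $\OF\otimes\OS$-module à travers la dualité de Poincaré et la polarisation (où l'on se sert de ce que $p$ est non ramifié et $F$ totalement réel), et mener à bien les calculs de chaîne des troisième et quatrième conditions, qui sont les véritables analogues du lemme de de Jong avec des noyaux $H_i/H_{i-1}$ désormais munis d'une structure de schéma en $\OF/p$-modules.
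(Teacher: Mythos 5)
Your overall skeleton --- checking the four conditions of la définition \ref{chaine} one by one --- is the same as the paper's, but the paper disposes of each condition by citation: \cite{DP} \S 2.8 for the first, ``automatique'' for the second, \cite{dJ} proposition 1.7 for the third, and \cite{dJ} lemma 2.3 or \cite{RZ} \S 3.23 d) for the fourth. Your sketches for conditions 2, 3 and 4 do reconstruct the ideas behind those references (the Rosati involution being trivial on $\OF$, the Lagrangian property of $H_g$ forcing the period of the extended chain to be multiplication by $p$, the identification of $M_{i-1}/M_i$ with the Dieudonné crystal of the Raynaud group scheme $H_i/H_{i-1}$), so there the difference is only one of explicitness versus citation.

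However, your argument for the first condition has a genuine gap. For $0<i<g$ the abelian scheme $A/H_i$ carries no $\Zps$-polarisation: only $A$ has one (and $A/H_g$ inherits one because $H_g$ is Lagrangian); the polarisation descended from $\lambda$ to $A/H_i$ has kernel $H_i^{\perp}/H_i$, of order a positive power of $p$. Moreover la définition \ref{Yzero} imposes the Kottwitz condition on $\omega_{A/H_i}$ only, never on $\omega_{(A/H_i)^{\vee}}$. So your claim that $\operatorname{Lie}((A/H_i)^{\vee})$ is locally free of rank $g$ over $\OF\otimes\OS$ ``grâce à la condition de Kottwitz et à la $\Zps$-polarisation qui relie $A/H_i$ et son dual'' is unfounded precisely for the intermediate quotients, and with it the Hodge-filtration argument collapses. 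The proofs behind the paper's citations avoid this: since $p$ is unramified, $\OF\otimes\Zp$ is étale over $\Zp$, so any $\OF\otimes\OS$-module that is locally free over $\OS$ is automatically projective over $\OF\otimes\OS$ (separability idempotent); the rank is then pinned down to $2g$ on each isotypic factor fibrewise, e.g.\ via Dieudonné theory at points of characteristic $p$ (Frobenius is $\sigma$-semilinear, hence permutes the factors of $\OF\otimes W(k)$ transitively and forces equal ranks) and the Betti comparison in characteristic $0$; alternatively one works with the full $2g$-periodic self-dual chain, in which the duals $(A/H_i)^{\vee}$ appear, up to prime-to-$p$ isogeny, as members of the chain. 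Note also that your condition 4 needs more than the rank count you invoke (which anyway presupposes condition 1): the real content of \cite{dJ} lemma 2.3 is the flatness --- local freeness over $\OF/p\otimes\OS$ --- of the cokernel, which your proposal asserts via the Dieudonné-crystal identification but does not prove.
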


\begin{proof}
On doit vérifier les quatre conditions de 
la définition \ref{chaine}.
La première condition est montrée dans 
\cite{DP} \S 2.8.
La deuxième condition est automatique.
La troisième condition est vérifiée 
par \cite{dJ} proposition 1.7.
Et la dernière condition est montrée par 
\cite{dJ} lemma 2.3 ou \cite{RZ} \S 3.23 d).
\end{proof}

On construit maintenant le diagramme de modèle local.
On définit d'abord un schéma $Z_0$ sur $\Zp$, 
tel que  pour tout schéma $S$ sur $\Zp$,
$Z_0(S)$ est égal à 
l'ensemble des classes d'isomorphismes de
$$
(A,\lambda,\iota,\bar{\eta},H_{\centerdot}, 
\phi)
$$
où $(A,\lambda,\iota,\bar{\eta},H_{\centerdot})$ 
est un $S$-point de $Y_0$,
et 
$$\phi: \mathcal{H}^1(A/H_{\centerdot}) \xrightarrow{\sim} 
St_{\centerdot}\otimes\OS$$ 
est un isomorphisme de
chaînes polarisées.
On note la projection canonique par
$$\pi: Z_0 \to Y_0.$$
La proposition \ref{chainenormalisee} implique que 
$\pi$ est un torseur pour la topologie de Zariski 
sous un schéma en groupes lisse.

\begin{defi} \label{Mzero}
Le modèle local $M_0$ est le schéma sur $\Zp$ 
tel que pour tout schéma $S$ sur $\Zp$,
$M_0(S)$ est égal à 
l'ensemble des classes d'isomorphismes de 
$$
(W_0, \ldots, W_g),
$$
où
\begin{enumerate}[label=--]
\item Pour $i=0,...,g$,
$W_i\subset St_i\otimes\OS$ est un sous-$\OF\otimes\OS$-module,
qui est un $\OS$-facteur direct,
et localement sur $S$ il est isomorphe à $\OF^g\otimes\OS$.

\item La flèche 
$St_{i-1}\otimes\OS \leftarrow St_i\otimes\OS$ 
envoie $W_i$ dans $W_{i-1}$.

\item Pour $j=0$ et $g$, $W_j$ est isotrope pour $q_j$. 

\end{enumerate}
\end{defi}

Par définition $M_0$ est un sous-schéma fermé d'un produit de 
$(g+1)$ Grassmanniennes.
On définit un morphisme 
$$
f: Z_0 \to M_0
$$
tel que, 
soit 
$(A,\lambda,\iota,\bar{\eta},H_{\centerdot},\phi)$
un $S$-point de $Z_0$,
alors son image par $f$ est égale
à $(\phi(\omega_{A}), ..., \phi(\omega_{A/H_g}) )$.
On remarque que 
la propriété d'isotropie 
de $\omega_{A}$ et $\omega_{A/H_g}$
est prouvée par \cite{dJ} \S 2,
donc $f$
est bien défini.
La proposition suivante est un corollaire de la théorie 
de Grothendieck-Messing
sur la déformation des schémas abéliens.

\begin{prop} \label{grothendieckmessing}
Le morphisme $f$ est lisse.
\end{prop}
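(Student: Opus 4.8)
Le plan est d'établir la lissité de $f$ par le critère infinitésimal, en combinant la théorie de Grothendieck-Messing avec le fait que le cristal de de Rham fournit des relèvements canoniques et fonctoriels de toute la chaîne. Comme $Z_0$ et $M_0$ sont de type fini sur le schéma noethérien $\Spec\Zp$, le morphisme $f$ est localement de présentation finie, et il suffit de vérifier la propriété de relèvement formel le long des épaississements d'idéal de carré nul. Quitte à tester sur le spectre d'un anneau artinien local, on peut de plus supposer $p$ nilpotent sur la base (le cas $p$ inversible relevant de la théorie classique sur la fibre générique), de sorte que la structure de puissances divisées canonique sur l'idéal de carré nul permet d'appliquer Grothendieck-Messing. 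Soit donc $S_0\hookrightarrow S$ une telle immersion, donnant un $S_0$-point $a_0=(A_0,\lambda_0,\iota_0,\bar\eta_0,H_{0,\centerdot},\phi_0)$ de $Z_0$ et un $S$-point $b=(W_0,\ldots,W_g)$ de $M_0$ compatibles, c'est-à-dire $W_i|_{S_0}=\phi_0(\omega_{A_0/H_{0,i}})$ pour tout $i$. Je dois construire un $S$-point $a$ de $Z_0$ relevant $a_0$ et d'image $b$ par $f$.

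La première étape consiste à relever la chaîne de de Rham. Le cristal de de Rham de la chaîne $A_0/H_{0,\centerdot}$, évalué sur l'épaississement à puissances divisées $S_0\hookrightarrow S$, fournit une chaîne polarisée $\widetilde{\mathcal{H}}^1_\centerdot$ de $\OF\otimes\OS$-modules relevant canoniquement $\mathcal{H}^1(A_0/H_{0,\centerdot})$, les flèches de transition étant les valeurs sur $S$ des morphismes de cristaux induits par les isogénies. Comme les isomorphismes de chaînes polarisées forment un torseur sous le schéma en groupes $\operatorname{Aut}(St_\centerdot)$, qui est lisse (proposition \ref{chainenormalisee}), l'isomorphisme $\phi_0$ se relève en un isomorphisme de chaînes polarisées $\widetilde\phi:\widetilde{\mathcal{H}}^1_\centerdot\xrightarrow{\sim}St_\centerdot\otimes\OS$. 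Je pose alors $\widetilde\omega_i:=\widetilde\phi^{-1}(W_i)$: ce sont des sous-$\OF\otimes\OS$-modules facteurs directs, compatibles aux flèches de la chaîne et isotropes pour $j=0,g$ (toutes ces propriétés étant héritées de celles des $W_i$ via l'isomorphisme de chaînes polarisées $\widetilde\phi$), et ils relèvent les $\omega_{A_0/H_{0,i}}$.

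La deuxième étape applique Grothendieck-Messing. Le relèvement de la filtration de Hodge $\widetilde\omega_0\subset\widetilde{\mathcal{H}}^1(A_0)$ définit un relèvement $A/S$ de $A_0$; la $\OF$-action $\iota_0$ se relève car $\widetilde\omega_0$ est $\OF\otimes\OS$-stable, et la polarisation $\lambda_0$ se relève car $\widetilde\omega_0$ est isotrope. De même, chaque $A_0/H_{0,i}$ se relève via $\widetilde\omega_i$, et la condition (2) de la définition \ref{Mzero} (à savoir que la flèche de la chaîne envoie $W_i$ dans $W_{i-1}$) assure, par fonctorialité du cristal, que les isogénies $A_0/H_{0,i-1}\to A_0/H_{0,i}$ se relèvent à $S$; leurs noyaux fournissent un drapeau $H_\centerdot$ de sous-groupes finis plats $\OF$-stables de $A[p]$, de rangs prescrits, avec $H_g$ isotrope grâce à la condition (3) sur $W_g$. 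La structure de niveau $\bar\eta_0$, étant première à $p$ donc étale, se prolonge de manière unique en $\bar\eta$ sur $S$. En posant $\phi:=\widetilde\phi$ et en utilisant l'identification canonique $\mathcal{H}^1(A/H_\centerdot)=\widetilde{\mathcal{H}}^1_\centerdot$ (cohomologie de de Rham du relèvement = valeur du cristal), on obtient un $S$-point $a=(A,\lambda,\iota,\bar\eta,H_\centerdot,\phi)$ de $Z_0$ relevant $a_0$, et l'on a $f(a)=(\phi(\omega_{A/H_i}))_i=(\widetilde\phi(\widetilde\omega_i))_i=(W_i)_i=b$, ce qui conclut.

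La principale difficulté est d'orchestrer Grothendieck-Messing sur toute la chaîne simultanément en préservant de façon cohérente l'ensemble des structures ($\OF$-action, polarisation, drapeau, isotropie): le point clé est de reconnaître que les trois conditions définissant $M_0$ (facteur direct, compatibilité aux flèches, isotropie) sont exactement les conditions de Grothendieck-Messing garantissant respectivement l'existence du relèvement abélien avec ses endomorphismes, le relèvement des isogénies de la chaîne, et le relèvement de la polarisation. Une fois cette correspondance établie et la fonctorialité du cristal invoquée pour que les flèches relevées soient automatiquement les bonnes, le relèvement cherché s'en déduit et la lissité de $f$ suit du critère infinitésimal.
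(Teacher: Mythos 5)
L'article ne rédige aucune démonstration pour cette proposition (il renvoie simplement à Grothendieck-Messing), et votre schéma de preuve --- critère infinitésimal, relèvement canonique de la chaîne de de Rham par évaluation du cristal sur l'épaississement à puissances divisées, relèvement de $\phi_0$ via la proposition \ref{chainenormalisee}, puis relèvement des structures au moyen de $\widetilde\omega_{\centerdot}=\widetilde\phi^{-1}(W_{\centerdot})$ --- est bien l'argument standard visé. Un point mineur d'abord : avant d'invoquer la proposition \ref{chainenormalisee} pour relever $\phi_0$, il faut vérifier que l'évaluation du cristal $\widetilde{\mathcal{H}}^1_{\centerdot}$ est encore une chaîne polarisée au sens de la définition \ref{chaine} ; la condition (4) sur les conoyaux, en particulier, ne se relève pas gratuitement et demande un petit argument.

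Le vrai trou est l'affirmation « leurs noyaux fournissent un drapeau $H_{\centerdot}$ de sous-groupes finis plats $\OF$-stables de $A[p]$ [...] avec $H_g$ isotrope ». Si $\pi_i: A\to B_i$ relève $\pi_{0,i}: A_0\to A_0/H_{0,i}$, le noyau $H_i=\Ker\pi_i$ est bien fini plat, $\OF$-stable et de rang $q^i$, mais rien dans ce qui précède ne garantit qu'il soit tué par $p$ : un homomorphisme de schémas en groupes finis plats nul sur $S_0$ n'est pas nul sur $S$ en général (penser à $x\mapsto\epsilon x$ sur $\alpha_p$ au-dessus de $k[\epsilon]/(\epsilon^2)$) ; la rigidité ne vaut que pour les schémas abéliens. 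Pour obtenir $H_i\subset A[p]$, il faut relever aussi l'isogénie de retour $\psi_{0,i}: A_0/H_{0,i}\to A_0$ vérifiant $\psi_{0,i}\circ\pi_{0,i}=[p]_{A_0}$ ; la rigidité des homomorphismes de schémas abéliens donne alors $\psi_i\circ\pi_i=[p]_A$, d'où la conclusion. Or, par Grothendieck-Messing, l'existence de $\psi_i$ exige que le morphisme cristallin de $\psi_{0,i}$ --- transporté par $\widetilde\phi$, c'est l'application $St_0\otimes\OS\to St_i\otimes\OS$ obtenue en parcourant la chaîne $2g$-périodique dans l'autre sens, car $\psi_{0,i}$ est la composée des isogénies duales de la chaîne périodique (les deux composées avec $\pi_{0,i}$ valent $[p]$, et $\pi_{0,i}$ est un épimorphisme fppf) --- envoie $W_0$ dans $W_i$. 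Cette condition ne figure pas dans la définition \ref{Mzero}, qui n'impose la compatibilité que pour les flèches $\alpha_i: St_i\to St_{i-1}$. Elle s'en déduit, mais par un argument de dualité qu'il faut écrire : si $\alpha_i(W_i)\subset W_{i-1}$, alors $\alpha_i^{\vee}(W_{i-1}^{\perp})\subset W_i^{\perp}$ (où $W^{\perp}$ désigne l'annulateur dans le module dual : pour $\xi\in W_{i-1}^{\perp}$ et $w\in W_i$ on a $\langle\alpha_i^{\vee}\xi,w\rangle=\langle\xi,\alpha_i w\rangle=0$), et la condition (3) dit exactement que les identifications $q_0,q_g$ aux extrémités échangent $W_0$ avec $W_0^{\perp}$ et $W_g$ avec $W_g^{\perp}$ (lagrangiennes) ; toutes les flèches de la chaîne périodique respectent donc les filtrations, en particulier les composées « le long chemin ». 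C'est ce même mécanisme (relever la polarisation descendue $\bar\lambda_0$ de $A_0/H_{0,g}$, puis conclure par rigidité que $\lambda$ se factorise) qui donne l'isotropie de $H_g$, que vous attribuez à juste titre à la condition (3) mais sans argument. Une fois ces points ajoutés, votre démonstration est complète et conforme à celle qu'esquisse l'article.
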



\section{Schémas de Hilbert-Siegel de niveau $\Gpp$}

\subsection{Théorie de Raynaud}
\subsubsection{}
La construction de schémas de Hilbert-Siegel
en niveau $\Gpp$ est basée sur la théorie de Raynaud
\cite{R},
qui classifie les schémas en $\Fq$-vectoriels de dimension~$1$
vérifiant une certaine hypothèse
sur une base au-dessus de $\Spec D$,
où 
$D$ est un anneau de Dedekind défini dans \cite{R} \S 1.1.
On note $q=p^n$,
$\QQ_q$ l'extension non-ramifiée
de degré $n$ de $\Qp$,
et $\Zq$ l'anneau des entiers de $\QQ_q$.
Nous rappelons la théorie de \cite{R}
en supposant que la base est un schéma sur $\Spec\Zq$.

Dans \cite{R} \S 1.3 (17),
un élément particulier
$$w_p \in p\Zq^{\times}$$ 
est défini.
On note 
$$\chi: \Fq^{\times} \to \Zq^{\times}$$
le caractère de Teichmüller,
qui est un relèvement du morphisme de Frobenius
$$
\Fr: \Fqs\to \Fqs,\quad x\mapsto x^p.
$$
Pour tout $m\in\ZZ$,
on note $\chi^{m}:\Fq^{\times} \to \Zq^{\times}$
la $m$-ième puissance de $\chi$.
On en déduit que tout caractère de $\Fqs$
vers $\Zq^{\times}$ est de la forme
$\chi^{m}$ pour $m\in\{1,...,q-1\}$.

Soit $S$ un schéma au-dessus de $\Spec\Zq$.
Soit 
$H$ un schéma en groupes fini plat de rang $q$ sur $S$ 
muni d'une action de $\Fq$. 
On note $\AAA$ l'algèbre de Hopf de $H$.
Alors $\AAA$ est une $\OS$-algèbre 
localement libre de rang $q$,
et il existe une décomposition
$\AAA=\OS\oplus\JJJ$,
où $\JJJ$ est l'idéal d'augmentation de $H$,
i.e. $\JJJ$ est le faisceau des fonctions sur $H$ 
qui s'annulent en l'élément neutre.
L'action de $\Fq$ sur $H$
induit une action de $\Fqs$ sur $\JJJ$,
donc on peut décomposer $\JJJ$ en composantes $\Fqs$-isotypiques:
$$
\JJJ=\mathop\bigoplus\limits_{m=1}^{q-1} \JJJ_{\chi^m}.
$$
La définition suivante est signalée dans 
\cite{R} \S 1.2, la condition $(\star\star)$.

\begin{defi} \label{groupeRaynaud}
On dit que $H$ est un schéma en groupes  \emph{de Raynaud}
si pour tout $m\in\{1,...,q-1\}$,
le faisceau $\JJJ_{\chi^m}$ 
est un fibré en droites sur $S$
(i.e. il est de rang~$1$).
\end{defi}

Le théorème suivant est le théorème principal de \cite{R}
(\cf théorème 1.4.1)
qui classifie les schémas en groupes de Raynaud.

\begin{theo}[Raynaud] \label{classificationRaynaud}
Soit $S$ un schéma sur $\Spec\Zq$.
Il existe une bijection canonique
entre les classes d'isomorphismes
de schémas en groupes de Raynaud sur $S$
et les classes d'isomorphismes des systèmes
$(\LLL_k, a_k, b_k)_{k=0,...,n-1}$,
où
$\LLL_0, \LLL_1,..., \LLL_{n-1}$
sont des fibrés en droites sur $S$, et
$$
a_k: \LLL_{k-1}^{\otimes p} \to \LLL_{k}, 
\quad b_k: \LLL_{k}\to \LLL_{k-1}^{\otimes p},
\quad k=0,...,n-1
$$
sont des morphismes de fibrés en droites
(avec $\LLL_{-1}=\LLL_{n-1}$)
tels que
$$
a_kb_k=w_p, \quad k=0,...,n-1.
$$
On appelle 
$(\LLL_k, a_k, b_k)_{k=0,...,n-1}$ 
le paramètre du schéma en groupes de Raynaud associé.
\end{theo}

\begin{rema} \label{Raynaudalgebre}
La bijection dans le théorème \ref{classificationRaynaud} est construite de la manière suivante. 
Soit $H/S$ un schéma en groupes de Raynaud,
et $\AAA$ et $\JJJ$ comme précédemment.
Alors le paramètre de $H$ est défini par 
$$\LLL_k=\JJJ_{\cpk},\quad k=0,...,n-1,$$
et $a_k$ et $b_k$
sont 
induits par la multiplication et la comultiplication de $\AAA$,
respectivement.
Réciproquement,
si $(\LLL_k, a_k, b_k)$ est un système 
dans le théorème \ref{classificationRaynaud} ,
alors l'algèbre de Hopf de $H$ est définie par
$$
\mathcal{A}=
\mathop\bigoplus\limits_{0\leq r_k \leq p-1}
(\LLL_0^{\otimes r_0}\otimes
\LLL_1^{\otimes r_1}\otimes
 ... \otimes
\LLL_{n-1}^{\otimes r_{n-1}}).
$$
(Ici on note $\otimes = \otimes_{\OS}$).
Pour tout $\lambda \in \Fqs$,
$\lambda$ agit sur $\LLL_0^{\otimes r_0}\otimes
 ... \otimes
\LLL_{n-1}^{\otimes r_{n-1}}$  
par la multiplication par
$\chi^{r_0+r_1p+...+r_{n-1}p^{n-1}}(\lambda)$,
et $0$ agit par la multiplication par $0$
sur tous les $\LLL_k$.
\end{rema}

\subsubsection{}

Nous définissons la notion de générateur de Raynaud
d'un schéma en groupes de Raynaud.
Soit $S$ un schéma sur $\Spec\Zq$,
et $H/S$ un schéma en groupes de Raynaud.
On note $(\LLL_k, a_k, b_k)$ 
le paramètre de $H$.
On considère le morphisme composé
$$
\LLL_0^{\otimes p^n} 
\xrightarrow{a_1^{\otimes p^{n-1}}} 
\LLL_1^{\otimes p^{n-1}}
\xrightarrow{a_2^{\otimes p^{n-2}}} 
\LLL_2^{\otimes p^{n-2}}
\to
... 
\to
\LLL_{n-2}^{\otimes p^2}
\xrightarrow{a_{n-1}^{\otimes p}} 
\LLL_{n-1}^{\otimes p}
\xrightarrow{a_{0}} \LLL_0;
$$
on note  
$$
a_{0} \cdot a_{n-1}^{\otimes p} \cdot ... \cdot 
a_2^{\otimes p^{n-2}}\cdot a_1^{\otimes p^{n-1}}
\in \Hom_{\OS}(\LLL_0^{\otimes (p^n-1)}, \OS)
$$
l'élément qui correspond à cette flèche composée.

\begin{defi} \label{generateur}
On appelle un \emph{générateur de Raynaud} de $H$
un élément 
$$x \in \Hom_{\OS}(\LLL_0, \OS)$$
qui vérifie l'équation
$$
x^{\otimes (p^n-1)}=  
a_{0} \cdot a_{n-1}^{\otimes p} \cdot ... \cdot 
a_2^{\otimes p^{n-2}} \cdot a_1^{\otimes p^{n-1}}.
$$
\end{defi}

\begin{exem} \label{generateurnaif}
Soit $H$ un schéma en groupes de Raynaud sur $\Spec \CCC$.
Comme tous les fibrés $\LLL_k$ sont triviaux sur $\Spec \CCC$, 
on peut regarder les $a_k, b_k$ comme des éléments de $\CCC$.
Avec les notations précédentes, 
on a 
$$\mathcal{A}= \CCC[t_0, ..., t_{n-1}]/ 
(t_0^p-a_1 t_1,
t_1^p-a_2t_2,...,
t_{n-1}^p-a_{0} t_0).$$
Comme $w_p$ est inversible dans $\CCC$, 
on a $a_k \neq 0$ pour tout $k$.
Donc $H$ est étale sur $\Spec\CCC$,
et $H(\CCC)$ est un espace $\Fq$-vectoriel de dimension $1$.
Un générateur de Raynaud $x$ de $H$ 
définit un point non-nul de $H(\CCC)$
en prenant $t_0=x$,
et vice-versa.
On en déduit que,
dans ce cas la notion de générateur de Raynaud 
et de générateur naïf  coïncident.
\end{exem}

\begin{rema} \label{generateurdual}
Soit $H$ un schéma en groupes de Raynaud sur $S$, et
$(\LLL_k, a_k, b_k)$ son paramètre.
On note $H^{\vee}$ le dual de Cartier de $H$.
On voit facilement que $H^{\vee}$ vérifie la définition \ref{groupeRaynaud}
avec le paramètre $(\LLL_k^{\vee}, b_k, a_k)$.
Supposons 
$$x \in \Hom_{\OS}(\LLL_0, \OS),
\quad y \in \Hom_{\OS}(\LLL_0^{\vee}, \OS)$$
des générateurs de Raynaud de $H$ et $H^{\vee}$, respectivement.
Alors $y$ définit un élément de
$\Hom_{\OS}(\OS, \LLL_{0})$.
On note $xy$ l'élément de $\Gamma(S,\OS)$
qui correspond à la flèche composée
$\OS\xrightarrow{y}\LLL_0\xrightarrow{x}\OS$.
\end{rema}

\subsection{Le schéma $Y_1$}

Nous définissons le schéma de Hilbert-Siegel $Y_1/\Spec\Zq$
de niveau $\Gpp$.
On note $Y_{0,\Zq}$ le changement de base 
du schéma $Y_0$ sur $\Spec \Zq$.
Dans la suite de cet article (sauf \S 5.4),
on fixe un isomorphisme $\OF/p \cong \Fq$.

\begin{lemm} \label{Yzeroplat}
Soit 
$S$ un schéma sur $\Spec\Zq$,
et $(A,\lambda,\iota,\bar{\eta},H_{\centerdot})$
un $S$-point de $\Yq$. 
Alors pour $i=1,...,g$,
$H_i/H_{i-1}$ est un schéma en groupes de Raynaud 
(avec la notation $H_0=0$).
\end{lemm}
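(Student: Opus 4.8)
The plan is to verify, for each $i \in \{1, \dots, g\}$, that the quotient $H_i/H_{i-1}$ satisfies the defining condition of Definition \ref{groupeRaynaud}, namely that each $\Fqs$-isotypic component of its augmentation ideal is an invertible $\OS$-module. The essential input is the hypothesis in Definition \ref{Yzero} that $\omega_{A/H_j}$ is a locally free $\OF \otimes \OS$-module for every $j$, together with the freeness of $\omega_A$. First I would record the basic structural facts: $H := H_i/H_{i-1}$ is a finite flat group scheme of rank $q = p^n$ over $S$ (since $\rg H_i = p^{ni}$ and the flag is a chain of finite flat subgroups), and it carries an action of $\OF/p \cong \Fq$ induced by $\iota$, because each $H_j$ is stable under $\OF$ and $p$ annihilates $A[p]$. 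Thus $H$ is a scheme in $\Fq$-vector spaces equipped with the $\Fqs$-action on its augmentation ideal $\JJJ$, and $\JJJ$ decomposes as $\bigoplus_{m=1}^{q-1} \JJJ_{\chi^m}$ as in the excerpt. The content of the lemma is precisely that each $\JJJ_{\chi^m}$ has rank $1$.

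The key geometric observation is that the co-Lie algebra $\omega_H$ equals $e^*(\JJJ/\JJJ^2)$, the conormal module at the unit section, and that the short exact sequence $0 \to H \to A/H_{i-1} \to A/H_i \to 0$ yields an exact sequence of co-Lie modules
\begin{equation*}
0 \to \omega_{A/H_i} \to \omega_{A/H_{i-1}} \to \omega_H \to 0,
\end{equation*}
which is $\OF \otimes \OS$-equivariant. By hypothesis the outer two terms are locally free $\OF \otimes \OS$-modules of the same rank, so $\omega_H$ is a locally free $\OF/p \otimes \OS = \Fq \otimes \OS$-module; its rank must be $1$ since $\omega_{A/H_{i-1}}$ and $\omega_{A/H_i}$ are free of the same rank $g$ over $\OF \otimes \OS$ after passing to the associated graded, making $\omega_H$ free of rank one over $\Fq \otimes \OS$. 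I would then decompose $\Fq \otimes \OS$ into its $\Fqs$-isotypic pieces over $\Spec \Zq$: since the Teichmüller lifts split $\Fq \otimes \OS$ into a product of line bundles indexed by the characters $\chi^{p^k}$, the freeness of $\omega_H$ of rank one over $\Fq \otimes \OS$ forces each isotypic component $(\omega_H)_{\chi^{p^k}}$ to be an invertible $\OS$-module.

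It remains to transfer this invertibility from the co-Lie module $\omega_H = e^*(\JJJ/\JJJ^2)$ to the full augmentation ideal $\JJJ$ itself, component by component. The clean way is to invoke Raynaud's structural analysis: once one knows $H$ is killed by $p$ and is a scheme in $\Fq$-vector spaces, the condition that the isotypic components $(\omega_H)_{\chi^{m}}$ for the "fundamental" characters $m = p^k$ are line bundles propagates — via the multiplication maps of the Hopf algebra $\AAA$ described in Remark \ref{Raynaudalgebre} — to the statement that every $\JJJ_{\chi^m}$ is a line bundle, because over the Dedekind-type base $\Spec \Zq$ the algebra $\AAA$ is built as a tensor product of symmetric powers of the line bundles $\LLL_k = \JJJ_{\chi^{p^k}}$. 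The main obstacle is exactly this last step: one must ensure that the hypothesis $(\star\star)$ of \cite{R} is genuinely verified and not merely its shadow on co-Lie, which requires checking that the higher isotypic components $\JJJ_{\chi^m}$ (for $m$ not a power of $p$) are controlled — this is where flatness of $H$ over $S$ and the assumption that the base lies over $\Spec \Zq$ (so that the Teichmüller splitting is available) are both used. I would therefore conclude by appealing to the fact, valid over $\Spec \Zq$ as noted in the excerpt's footnote, that a finite flat scheme in one-dimensional $\Fq$-vector spaces whose co-Lie module is a rank-one $\Fq \otimes \OS$-module automatically satisfies the Raynaud condition.
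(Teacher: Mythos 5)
Your central step is false. You claim that, because $\omega_{A/H_i}$ and $\omega_{A/H_{i-1}}$ are locally free $\OF\otimes\OS$-modules of the same rank, the cokernel $\omega_H$ of the co-Lie sequence is locally free of rank one over $\Fq\otimes\OS$. Nothing forces this: the cokernel of an injection between locally free modules of equal rank need not be locally free over the mod-$p$ reduction, and its fibre rank is not determined by any rank count. Concretely, take $F=\QQ$ (so $n=1$, $q=p$, Raynaud $=$ Oort-Tate), $g=1$, let $E'/\Zpp$ be an elliptic curve with good ordinary reduction and $C\subset E'[p]$ its canonical (multiplicative) subgroup; put $E=E'/C$ and $H_1=E'[p]/C\subset E[p]$, a finite flat subgroup which is \emph{étale} of order $p$. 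This is a point of $Y_0(\Zpp)$; the isogeny $E\to E/H_1$ is étale, so $\omega_{H_1}=0$, whereas $\Fq\otimes\OS=\Fp\neq 0$: the rank is $0$, not $1$. (In equal characteristic $p$ the left-exactness you assert also fails: for $\alpha_p$ inside a supersingular elliptic curve, the map $\omega_{E/\alpha_p}\to\omega_E$ is zero.) The same phenomenon occurs for any $F$ whenever some graded piece $H_i/H_{i-1}$ has a nontrivial étale part, which happens on the whole ordinary locus. Your concluding appeal --- that a finite flat $\Fq$-vector space scheme whose co-Lie module is rank one over $\Fq\otimes\OS$ automatically satisfies Raynaud's condition $(\star\star)$ --- is not a statement in \cite{R}, and no criterion of this shape can exist, because the Raynaud condition is invisible to $\omega_H$: an étale $H$ has $\omega_H=0$ yet is perfectly Raynaud.

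The paper's proof is entirely different, and its real content is exactly what your proposal lacks. Since condition $(\star\star)$ of the définition \ref{groupeRaynaud} is stable under base change, the paper reduces to the universal point of $\Yq$; it then invokes normality of the local model $M_0$ (\cite{PZ}, resting on \cite{G}), transfers normality to $Y_0$ through the smooth local model diagram, concludes that each connected component of $\Yq$ is integral with fraction field of characteristic $0$, and applies Raynaud's proposition 1.2.2: over such a base, \emph{every} finite flat scheme of $\Fq$-vector spaces of rank $q$ satisfies $(\star\star)$. So the needed input is geometric (normality and flatness of the universal base), not the Kottwitz condition on the $\omega_{A/H_i}$. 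The relation you sense between the $\OF$-structure of $[\omega_{A/H_i}\to\omega_{A/H_{i-1}}]$ and the Raynaud parameters is indeed the theme of \S 5.2 of the paper, but there it \emph{presupposes} that $H_i/H_{i-1}$ is Raynaud and is used to build the local model, not to prove this lemma.
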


\begin{proof}
On voit facilement que la définition \ref{groupeRaynaud}
est préservée par tout changement de base,
donc il suffit vérifier le lemme 
pour le point universel 
$(A,\lambda,\iota,\bar{\eta},H_{\centerdot})$ 
de $\Yq$.
Dans \cite{R} proposition 1.2.2, 
Raynaud a montré que 
si $S$ est un schéma intègre de corps des fractions
de caractéristique $0$,
alors tout schéma en $\Fq$-vectoriels, fini plat de rang $q$
au-dessus de $S$
est un schéma en groupes de Raynaud.
D'après \cite{PZ} theorem 0.1
(qui dépend des résultats de Görtz \cite{G}),
le modèle local $M_0$ est normal.
Comme la normalité est préservée par tout morphisme lisse,
le diagramme de modèle local 
implique que $Y_0$ (donc $\Yq$) est aussi normal.
Donc chaque composante connexe de $\Yq$ est intègre,
et on peut appliquer \cite{R} proposition 1.2.2 pour conclure.
\end{proof}

\begin{defi} \label{Yun}
On définit le schéma $Y_1$ sur $\Spec \Zq$
tel que pour tout $S$ sur $\Spec \Zq$,
$Y_1(S)$ est égal à 
l'ensemble des classes d'isomorphismes de 
$$
(A,\lambda,\iota,\bar{\eta},H_{\centerdot},
x_1,...,x_g,y_1,...,y_g),
$$
où $A,\lambda,\iota,\bar{\eta},H_{\centerdot}$ 
sont comme dans la définition \ref{Yzero},
et pour tout $i$,
$x_i$ (\resp $y_i$) 
est un générateur de Raynaud de 
$H_i/H_{i-1}$ (\resp $(H_i/H_{i-1})^{\vee}$)
tel que
$$x_1y_1=x_2y_2=...=x_gy_g$$
au sens de la remarque \ref{generateurdual}.
\end{defi}

\begin{rema} \label{Yunplat}
Le schéma $Y_1$ est plat sur $\Zq$.
En fait, comme $M_0$ est plat
(\cite{PZ} et \cite{G}),
il suffit de montrer que $Y_1\to \Yq$ est plat.
Mais c'est un corollaire du fait que,
dans la définition \ref{generateur} de générateur de Raynaud
le coefficient dominant de $x$ est égal à $1$.
\end{rema}


\section{Complexes de Lie}

\subsection{Un lemme}

Nous montrons la relation entre les composantes $\Fqs$-isotypiques 
de $[e^*(\III/\III^2) \to e^*i^*\Omega_X]$ dans \S 1
et les paramètres de Raynaud.
On précise d'abord les définitions de $\III$ et $X$.
Soit $S$ un schéma sur $\Spec \Zq$, 
et $H$ un schéma en groupe de Raynaud sur $S$
de paramètre $(\LLL_k, a_k, b_k)$.
On définit deux $\OS$-algèbres 
$$
\SSS=\mathop\bigoplus\limits_{r_k \geq 0}
(\LLL_0^{\otimes r_0}\otimes
\LLL_1^{\otimes r_1}\otimes
 ... \otimes
\LLL_{n-1}^{\otimes r_{n-1}})
$$
$$
\AAA=
\mathop\bigoplus\limits_{0\leq r_k \leq p-1}
(\LLL_0^{\otimes r_0}\otimes
\LLL_1^{\otimes r_1}\otimes
 ... \otimes
\LLL_{n-1}^{\otimes r_{n-1}}),
$$
donc
$H=\underline{\Spec}_{\OS}(\AAA)$.
On définit $X=\underline{\Spec}_{\OS}(\SSS)$. 
Les morphismes $a_k: \LLL_{k-1}^{\otimes p}\to \LLL_k$
induisent  une surjection $\SSS\twoheadrightarrow \AAA$,
et on note $\III$ son noyau.
L'action 
de  $\Fqs$ sur $\AAA$ (\cf la remarque \ref{Raynaudalgebre})
s'étend à une action sur $\SSS$ 
telle que les $a_k$ et $\SSS \twoheadrightarrow \AAA$ 
sont $\Fqs$-équivariants.
On en déduit que  l'immersion fermée $i: H\to X$
définie par $\III$ est $\Fqs$-équivariante.

On note $e: S\to H$ la section unité, et
$\dd: \III/\III^2 \to i^*\Omega_{X/S}$
la différentielle.
Pour $k=0,...,n-1$,
on note 
$$
[e^*(\III/\III^2) \xrightarrow{\dd} e^*i^*\Omega_{X/S}]_{\chi^{p^k}}
$$
la composante 
$\chi^{p^k}$-isotypique
de
$[e^*(\III/\III^2) \xrightarrow{\dd} e^*i^*\Omega_{X/S}]$.
(On rappelle que $\chi: \Fqs \to \Zqs$ est le caractère de Techmüller.)

\begin{lemm} \label{lemmedepart}
Il existe un isomorphisme canonique de complexe
$$
[\LLL_{k-1}^{\otimes p} \xrightarrow{a_k} \LLL_k ]
\cong
[e^*(\III/\III^2) \xrightarrow{\dd} e^*i^*\Omega_{X/S}]_{\chi^{p^k}}
$$
pour $k=0,...,n-1$.
\end{lemm}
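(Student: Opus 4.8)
The plan is to make both terms of the complex completely explicit as $\OS$-modules carrying their $\Fqs$-action, and then simply read off the $\cpk$-isotypic part. First I would observe that
$\SSS=\uSym_{\OS}\!\bigl(\bigoplus_{k=0}^{n-1}\LLL_k\bigr)$, so that $X$ is the total space of the rank-$n$ bundle $\bigl(\bigoplus_k\LLL_k\bigr)^{\vee}$; in particular $X$ is smooth over $S$ of relative dimension $n$ and $\Omega_{X/S}\cong\SSS\otimes_{\OS}\bigl(\bigoplus_k\LLL_k\bigr)$ is free. Pulling back along $e$, which corresponds to the augmentation (the zero section of $X$), then gives $e^*i^*\Omega_{X/S}\cong\bigoplus_{k=0}^{n-1}\LLL_k$.

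Next I would identify the generators of $\III$. The surjection $\SSS\twoheadrightarrow\AAA$ imposes exactly the relations "$t_{k-1}^{\otimes p}=a_k(t_{k-1}^{\otimes p})$": for each $k$ there is an $\OS$-linear map $\LLL_{k-1}^{\otimes p}\to\SSS$, $s\mapsto s-a_k(s)$, using the two embeddings of $\LLL_{k-1}^{\otimes p}$ (in degree $p$ in the $(k-1)$-th factor) and of $\LLL_k$ (in degree $1$ in the $k$-th factor, via $a_k$), with the convention $\LLL_{-1}=\LLL_{n-1}$. These $n$ maps assemble to $\bigoplus_k\LLL_{k-1}^{\otimes p}\to\III$, and a monomial count — reducing each exponent below $p$ — shows both that they generate $\III$ and that $\AAA$ is locally free of rank $q=p^n$. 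Since they cut out the codimension-$n$ subscheme $H$ inside the smooth $n$-dimensional $X$, the immersion $i$ is regular, so $\III/\III^2$ is locally free over $\AAA$ on these generators and $e^*(\III/\III^2)\cong\bigoplus_{k=0}^{n-1}\LLL_{k-1}^{\otimes p}$.

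The decisive point is the matching of $\Fqs$-weights together with the computation of the differential. Under the action of remark \ref{Raynaudalgebre}, $\LLL_k$ carries weight $\cpk$, while $\LLL_{k-1}^{\otimes p}$, sitting in degree $p$ in the $(k-1)$-th factor, carries weight $\chi^{p\cdot p^{k-1}}=\cpk$ (for $k=0$ one uses $\chi^{q}=\chi$ since $\chi^{q-1}=1$). Thus the summand $\LLL_{k-1}^{\otimes p}\subset e^*(\III/\III^2)$ and the summand $\LLL_k\subset e^*i^*\Omega_{X/S}$ lie in the \emph{same} isotypic component $\cpk$, and since the exponents $p^0,\dots,p^{n-1}$ are pairwise distinct modulo $q-1$ these components are disjoint. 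As $i$ is $\Fqs$-equivariant and the unit section $e$ is fixed by scalar multiplication, the differential $\dd$ is $\Fqs$-equivariant, hence preserves this decomposition and restricts on the $\cpk$-part to a map $\LLL_{k-1}^{\otimes p}\to\LLL_k$.

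Finally I would compute that restricted map. The generator $f_k$ satisfies $\dd f_k=\dd\bigl(t_{k-1}^{\otimes p}-a_k t_k\bigr)=p\,t_{k-1}^{\otimes(p-1)}\,\dd t_{k-1}-a_k\,\dd t_k$, and pulling back along $e$ kills the first term because the factor $t_{k-1}^{\otimes(p-1)}$ (with $p-1\geq 1$) lies in the augmentation ideal and so restricts to zero. Hence $e^*\dd$ on the $\cpk$-part is exactly $-a_k\colon\LLL_{k-1}^{\otimes p}\to\LLL_k$, the sign being absorbed by the isomorphism $(\id,-\id)$ of two-term complexes; this gives the asserted canonical isomorphism $[\LLL_{k-1}^{\otimes p}\xrightarrow{a_k}\LLL_k]\cong[e^*(\III/\III^2)\xrightarrow{\dd}e^*i^*\Omega_{X/S}]_{\cpk}$. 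I expect the main obstacle to be the bookkeeping in the middle two steps — verifying that $\III$ is freely generated by the $n$ equivariant generators and tracking the weights correctly across the twist by $p$ — whereas the vanishing of the $\dd(t^{\otimes p})$ term at the origin is the clean mechanism that forces the differential to be precisely $a_k$.
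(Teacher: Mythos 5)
Your proof is correct and follows essentially the same route as the paper: both identify $e^*i^*\Omega_{X/S}\cong\bigoplus_k\LLL_k$ and $e^*(\III/\III^2)\cong\bigoplus_k\LLL_{k-1}^{\otimes p}$ explicitly, use $\Fqs$-equivariance and the same weight bookkeeping ($\chi^{p\cdot p^{k-1}}=\chi^{p^k}$, with $\chi^{q}=\chi$ for $k=0$) to split into isotypic pieces, and compute $e^*\dd$ in local coordinates, where the term $p\,t_{k-1}^{\otimes(p-1)}\dd t_{k-1}$ dies at the unit section. The only cosmetic differences are that the paper establishes the conormal identification by a direct surjectivity-plus-degree-comparison argument for its map $\varphi$ (rather than invoking regularity of $i$, which the paper only records later), and it builds the sign into $\varphi_k(u)=(-u,a_ku)$ so that no final correction by $(\id,-\id)$ is needed.
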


\begin{proof}

On note $\NNNNN$ le noyau de $e^*i^*:\SSS\to\OS$
la projection
sur le premier facteur de $\SSS$.
Alors 
$e^*(\III/\III^2) =
e^*i^*(\III)
=\III/\III\NNNNN.
$
Pour tout $k$, on définit un morphisme de $\OS$-modules
$$
\varphi_k: \LLL_{k-1}^{\otimes p} \to \SSS
=\mathop\bigoplus\limits_{r_j \geq 0}
(\LLL_0^{\otimes r_0}\otimes
\LLL_1^{\otimes r_1}\otimes
 ... \otimes
\LLL_{n-1}^{\otimes r_{n-1}})
$$
qui envoie un élément $u$ de $\LLL_{k-1}^{\otimes p}$
à $(-u, a_ku)$ de la composante
$\LLL_{k-1}^{\otimes p}\oplus \LLL_k$
de $\SSS$.
Par définition,
$\varphi_k(\LLL_{k-1}^{\otimes p})\subset \III$.
On note
$$
\varphi:
\LLL_{n-1}^{\otimes p}\oplus
\LLL_0^{\otimes p}
\oplus ... \oplus 
\LLL_{n-2}^{\otimes p}
\to 
\III/\III\NNNNN
$$
la somme de tous les $\varphi_k$.

On montre que $\varphi$
est un isomorphisme.
Comme la question est d'une nature locale,
on peut supposer que tous les $\LLL_k$ 
sont triviaux sur $S$,
et identifier les $a_k$ avec des éléments de $\Gamma(S,\OS)$.
On obtient des isomorphismes naturels
$$
\SSS=\OS[x_0,...,x_{n-1}]
$$
$$
\AAA=\OS[x_0,...,x_{n-1}]/
(x_0^p-a_1x_1,x_1^p-a_2x_2,...,x_{n-1}^p-a_0x_0)
$$
$$
\III=
(x_0^p-a_1x_1,x_1^p-a_2x_2,...,x_{n-1}^p-a_0x_0)
$$
$$
\NNNNN=(x_0,x_1,...,x_{n-1})
$$
$$
\varphi_k: \OS \to \OS[x_0,...,x_{n-1}],
\quad 1\mapsto a_kx_k-x_{k-1}^p.
$$
Par la définition de $\varphi$,
on voit directement qu'il est surjectif.
L'injectivité de $\varphi$ vient d'une comparaison 
de degré des éléments dans $\III$ et $\III\NNNNN$.

Comme $\SSS=\uSym_{\OS}(\LLL_0\oplus\LLL_1\oplus...\oplus\LLL_{n-1})$ ,
il existe un isomorphisme canonique
$$
\psi: \LLL_0\oplus\LLL_1\oplus...\oplus\LLL_{n-1}
\xrightarrow{\sim} 
\Omega_{X/S}\otimes_{\SSS}\OS =e^*i^*\Omega_{X/S}.
$$
On obtient un diagramme 
$$
\xymatrix{
\oplus\LLL_{k-1}^{\otimes p}
\ar[r]^{\oplus a_k}
\ar[d]^{\varphi}_{\rotatebox{270}{$\sim$}}&
\oplus\LLL_{k}
\ar[d]^{\psi}_{\rotatebox{270}{$\sim$}}
\\
e^*(\III/\III^2)
\ar[r]^{\dd}
&e^*i^*\Omega_{X/S}
}.
$$
En prenant des coordonnées locales comme précédemment,
on voit que ce diagramme est commutatif.
Par définition $\varphi$ et $\psi$ 
sont $\Fqs$-équivariants.
On obtient l'isomorphisme dans l'énoncé quand on prend
les composantes $\Fqs$-isotypiques
(on rappelle que $\Fqs$ agit sur $\LLL_k$ via $\cpk$).
\end{proof}

\begin{rema} \label{caracterefondamental}
Dans la démonstration on voit que 
le complexe $[e^*(\III/\III^2) \xrightarrow{} e^*i^*\Omega_{X/S}]$
ne possède que des composantes isotypiques 
pour les caractères $\cpk$,
i.e. les caractères \emph{fondamentaux}.
Ce phénomène 
est en fait un corollaire du complexe
$_{\Fq}{\ell}_H^{\vee}$
de notre redéfinition.
\end{rema}

\subsection{Les complexes équivariants d'Illusie}
\subsubsection{}

On rappelle les deux complexes équivariants définis par Illusie \cite{I}.
On suppose que $S$ est un schéma.
On note 
$\Sfpqc$ le grand site fpqc de $S$,
et $\OO_{\Sfpqc}$ son faisceau structurel.

Soit $M$ un monoïde commutatif,
et $K/S$ un schéma en groupes plat et de présentation finie,
muni d'une action de $M$.
On note l'anneau $\ZZ[M]=\mathop\bigoplus\limits_{m\in M}\ZZ m$,
avec la multiplication et l'unité 
définies par celles de $M$.
La théorie de \cite{I} VII 2.2 définit le 
complexe de co-Lie $M$-équivariant de $K$:
$$
\llMK \in \Db(\ZZ[M]\otimes \OO_{\Sfpqc}).
$$
On remarque que le complexe défini dans \cite{I} est 
le complexe cotangent équivariant $\LL_K^M$;
par définition $\llMK$ est égal à $\LL e^*\LL_K^M$
avec $e$ la section unité de $K$.

Soit d'autre part
$A$ un anneau commutatif,
et $G$ un schéma en $A$-modules,
plat et de présentation finie 
sur $S$.
Dans \cite{I} VII 4.1 Illusie a défini 
le complexe de Lie $A$-équivariant de $G$:
$$\AllGv \in \Db(A\otL\OO_{\Sfpqc}).$$
Ici $\Db(A\otL\OO_{\Sfpqc})$ est la catégorie dérivée 
du dg-anneau $A\otL\OO_{\Sfpqc}$, \cf \S 4.3.

\subsubsection{}

On explique les idées dans les définitions de $\llMK$ et $\AllGv$.
Commençons par $\llMK$.
Un objet basique est le \emph{nerf} $\Ner(M,K)$.
C'est le schéma simplicial 
$$
\xymatrix{
\cdots 
\ar[r]
\ar@<-1ex>[r]
\ar@<1ex>[r]
\ar@<2ex>[r]
&M\times M\times K
\ar[r]_{\quad \dd_2}
\ar@<1ex>[r]|-{\dd_1}
\ar@<2ex>[r]^{\quad \dd_0}
&M\times K
\ar[r]|-{\dd_1}
\ar@<1ex>[r]^{\quad \dd_0}
\ar@<2ex>@/^/[l]|-{\sss_0}
\ar@<3ex>@/^/[l]^-{\sss_1}
&K
\ar@<1ex>@/^/[l]^{\quad \sss_0}
},
$$
où les flèches $\dd_i$ et $\sss_j$ 
sont définis dans \S 4.5.2:
par exemple entre les deux derniers sommets,
$\dd_0$ est l'action de $M$ sur $K$,
$\dd_1$ est la projection sur $K$,
et $\sss_0$ est la section neutre de $M$.
On obtient le schéma simplicial 
$\Ner(M,S)$ par la même définition (en remplaçant $K$ par $S$),
tel que  $\Ner(M,K)$ est un schéma simplicial au-dessus de  $\Ner(M,S)$
par un morphisme structurel.

On considère le complexe cotangent $\LL_{\Ner(M,K)/\Ner(M,S)}$
associé à ce morphisme structurel.
L'idée est qu'on utilise $\LL_{\Ner(M,K)/\Ner(M,S)}$
pour définir  une $M$-structure sur le complexe cotangent usuel
$\LL_{K/S}$.
Comme on travaille sur les complexes de co-Lie,
on utilise plutôt 
$\ell_{\Ner(M,K)}=\LL e^* \LL_{\Ner(M,K)/\Ner(M,S)}$ ,
où $e: \Ner(M,S)\to \Ner(M,K)$ est la section unité.

On note $\underline{\ell}_{\Ner(M,K)}$ le complexe 
sur le grand site fpqc associé à $\ell_{\Ner(M,K)}$.
C'est un objet de $\Db(\Ner(M,S)_{\fpqc})$,
où $\Ner(M,S)_{\fpqc}$ est le \emph{site total} fpqc de $\Ner(M,S)$.
On rappelle la définition du site et topos total de \cite{I} VI 5.1:
un faisceau sur $\Ner(M,S)_{\fpqc}$
est un système $(N_n)_{n\in\NNN}$,
où $N_n$ est un faisceau fpqc sur $M^n\times S$,
avec les morphismes de transition
$\dd_i^*N_n\to N_{n+1}$ et $\sss_j^* N_{n+1}\to N_n$
pour tout $n,i$ et $j$,
vérifiant les conditions de cocycle.

Alors on a un foncteur naturel
$$
\ner: \Db(\ZZ[M]\otimes \OO_{\Sfpqc})
\to
\Db(\Ner(M,S)_{\fpqc})
$$
induit par l'action de $M$,
\cf la définition \ref{nerd} pour la définition précise.
On veut définir $\llMK$ via l'isomorphisme
$$
\ner(\llMK)\cong \underline{\ell}_{\Ner(M,K)};
$$
mais on doit montrer  que 
$\underline{\ell}_{\Ner(M,K)}$ est dans l'image essentielle 
de $\ner$.
C'est démontré par un théorème clé de \cite{I}
(\cf le théorème \ref{thmillusie})
avec la pleine fidélité de $\ner$.
On obtient donc la définition de $\llMK$.

\subsubsection{}

On considère maintenant le complexe $\AllGv$.
La situation est plus compliquée parce que 
$A$ contient plus de structure qu'un monoïde.
Pour rappeler les idées de \cite{I},
on va utiliser la langage des diagrammes
(\cf \S 4.4.1).
On note pour toute catégorie $T$ et tout $r\in\NNN$,
$\Diag_r(T)$ (\resp $\Simp_r(T)$) la catégorie des $r$-diagrammes
(\resp objets $r$-simpliciaux) de $T$.

Supposons que $T$ est une catégorie abélienne.
Un outil clé de \cite{I} est un foncteur particulier 
$$\CCCC: T\to \Simp_1(\Diag_1(T)),$$ 
voir \S 4.4 pour la définition précise.
On note $\Mod_{\ZZ}$ la catégorie des $\ZZ$--modules.
Alors $\CCd A(1)$ est un objet de monoïde 
dans $\Simp_1(\Diag_1(\Mod_{\ZZ}))$,
et on a une action de $\CCd A(1)$ sur $\CCd G(1)$.
On peut imiter les constructions de \S 4.2.2 
pour $M=\CCd A(1)$ et $K=\CCd G(1)$:
on définit un foncteur (\cf la définition \ref{nerd})
$$
\nerd: 
\Db(\ZZ[\CC_{\centerdot}A(1)]\otimes \OO_{\Sfpqc})
\to
\Db(\Ner(\CC_{\centerdot}A(1), \CC_{\centerdot}G(1))_{\fpqc});
$$
on considère le complexe de Lie 
$\underline{\ell}^{\vee}_{\Ner(\CC_{\centerdot}A(1), \CC_{\centerdot}G(1))}$,
et on applique le théorème \ref{thmillusie}.
On déduit qu'il existe
un unique objet $L \in \Db(\ZZ[\CC_{\centerdot}A(1)]\otimes \OO_{\Sfpqc})$,
tel que
$$
\nerd(L)\cong
\underline{\ell}^{\vee}_{\Ner(\CC_{\centerdot}A(1), \CC_{\centerdot}G(1))}.
$$

Les propriétés particulières de $\CCCC$ permettent de descendre le complexe $L$
à la catégorie $\Db(A\otL\OO_{\Sfpqc})$.
Pour le décrire, on utilise la notation $\CCd\SSSS_1$ de \S 4.4,
qui est un $1$-diagramme de catégories.
On note $\Diag_{\CCd\SSSS_1}(T)$ 
la catégorie des $2$-diagrammes 
de type $\CCd\SSSS_1$ dans $T$.
Par définition on sait que les images de $\CCCC$ appartiennent à  $\Diag_{\CCd\SSSS_1}(T)$.
Par \cite{I} VI 9.5 et VI 11.5.2.2,
il existe un foncteur 
$$\Delred: \Diag_{\CCd\SSSS_1}(\Mod_{\ZZ})
\to \Simp_1(\Mod_{\ZZ}),$$
tel que son composé avec $\CCCC$
est quasi-isomorphe au foncteur 
de l'objet simplicial constant
de $\Mod_{\ZZ}$ à $\Simp_1(\Mod_{\ZZ})$.
Dans \cite{I} VI 9.5,
Illusie considère le foncteur 
$$
\ZZ^{\st}[-]= 
\Delta^2(\CCd \ZZ[-](1))^{\red}(-1)
: \Mod_{\ZZ}\to \Simp_1(\Mod_{\ZZ}).
$$
On note $\NN\ZZ^{\st}[-]$ le complexe de $\ZZ$-modules
associé à $\ZZ^{\st}[-]$ via la correspondance de Dold-Kan,
et $\tau_{\geq -1}\NN\ZZ^{\st}[-]$ sa troncation 
en degré $-1$.
Une propriété clé est que $\tau_{\geq -1}\NN\ZZ^{\st}[-]$
est quasi-isomorphe au foncteur identité de $\Mod_{\ZZ}$,
\cf \cite{I} VI 11.4.1.5.

Ces propriétés de $\CCCC$ définissent
un diagramme commutatif (\cf \cite{I} VI 11.5.2.2)
$$
\xymatrix{
\DD(A\otL\OO_{\Sfpqc})
\ar[r]^-{\CCCC}
\ar[rd]
&\DD(\ZZ[\CC_{\centerdot}A(1)]\otimes \OO_{\Sfpqc})
\ar[d]^{\Delred}\\
&\DD(\NN\ZZ^{\st}[A]\otimes \OO_{\Sfpqc})
},
$$
où la  flèche oblique est la restriction de scalaire
via $\NN\ZZ^{\st}[A]\to A$.
Comme $\tau_{\geq -1}\NN\ZZ^{\st}[A]$ est une résolution de $A$,
on a une équivalence de catégorie
$$
\DD^{[0,1]}(A\otL\OO_{\Sfpqc})
\xrightarrow{\sim}
\DD^{[0,1]}(\NN\ZZ^{\st}[A]\otimes \OO_{\Sfpqc}).
$$
Cette équivalence nous permet de descendre $L$ 
de $\DD^{[0,1]}(\ZZ[\CC_{\centerdot}A(1)]\otimes \OO_{\Sfpqc})$ à 
$\DD^{[0,1]}(A\otL\OO_{\Sfpqc})$.
On définit donc $\AllGv\in \DD^{[0,1]}(A\otL\OO_{\Sfpqc})$ par l'isomorphisme
$$
\CC_{\centerdot}{\AllGv}(1)
\cong L.
$$

\subsubsection{}

On explique pourquoi la généralisation de $\AllGv$
au petit site de Zariski est possible.
On voit que dans \S 4.2.3, la définition consiste en deux étapes:
une descente par $\nerd$ et une descente par $\CCCC$.
La partie pour $\CCCC$ est purement algébrique,
donc se généralise automatiquement.
Néanmoins la généralisation sur $\nerd$ n'est pas évidente:
avec le petit site de Zariski,
on ne voit pas pourquoi on peut appliquer
le théorème \ref{thmillusie}.
On le verra via un autre site (qu'on appelle le site des \emph{isomorphismes locaux}),
tel que son topos est équivalent au topos de Zariski.
Cette idée sera expliquée dans \S 4.6 en détail.

\subsection{Les dg-anneaux}

Dans cette section on rappelle les rudiments 
sur les dg-anneaux 
(anneaux différentiels gradués)
utilisés dans \cite{I}.
On ne considère que des 
dg-anneaux de degré $\leq 0$.
Par définition,
un \emph{dg-anneau} $P_{\centerdot}$ est un complexe de $\ZZ$--modules
$$
...\to P_2 \to P_1 \to P_0
$$
muni d'une multiplication
$$
P_{\centerdot} \otimes P_{\centerdot} \to P_{\centerdot}
$$
et un élément unité dans $P_0$,
tel que les axiomes usuels d'anneau
sont vérifiés.
(Ici $P_{\centerdot} \otimes P_{\centerdot}$ est le complexe total
d'un double complexe.)
Un $P_{\centerdot}$--\emph{module} $M_{\centerdot}$
est un complexe de $\ZZ$--modules
muni d'une action de $P_{\centerdot}$.
On note $\CC(P_{\centerdot})$ la catégorie des $P_{\centerdot}$--modules.
La catégorie dérivée
$\DD(P_{\centerdot})$ est définie comme
la localisation de $\CC(P_{\centerdot})$
par l'ensemble des quasi-isomorphismes.
La même définition vaut pour 
$\DD^+(P_{\centerdot}),\DD^-(P_{\centerdot})$ et $\Db(P_{\centerdot})$.

Soient $A$ et $B$ deux anneaux commutatifs usuels.
Alors la définition de 
$$A\otL B$$ 
dépend du choix 
d'une résolution de $A$ (ou de $B$).
Soit $P_{\centerdot}$ est une résolution plate de $A$
qui est muni d'une structure de dg-anneau,
tel que $P_{\centerdot} \to A$
est un morphisme de dg-anneau.
Alors le complexe $P_{\centerdot} \otimes B$
est aussi un dg-anneau.
On peut définir
$$
\DD(A\otL B):=\DD(P_{\centerdot} \otimes B).
$$
La catégorie $\DD(A\otL B)$ 
est bien définie à équivalence près,
\cf \cite{I} VI 10.3.15.
Comme $\ZZ$ est de tor-dimension $1$,
on peut prendre $P_{\centerdot}$ d'une forme plus simple:
$$
P_{\centerdot}= [P_1\to P_0]
$$
où $P_0$ est un anneau plat sur $\ZZ$,
la flèche est injective
et l'image de $P_1$ est un idéal de $P_0$.

Soit maintenant $A$ un anneau commutatif
et $S$ un schéma.
On note $\OS$ le faisceau structurel du schéma $S$.

\begin{lemm} \label{ALS}
Si $S$ est plat sur $\Spec\ZZ$,
alors $A\otimes \OS$
est un représentant canonique de $\ALS$.
\end{lemm}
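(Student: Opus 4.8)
Le plan est de déduire l'énoncé directement de la définition de $\ALS$ rappelée plus haut, l'hypothèse de platitude servant uniquement à garantir l'exactitude du foncteur $-\otimes\OS$. D'abord je choisirais une résolution plate $P_{\centerdot}=[P_1\to P_0]$ de $A$, munie d'une structure de dg-anneau et telle que $P_{\centerdot}\to A$ soit un morphisme de dg-anneaux (avec $P_0$ plat sur $\ZZ$, la flèche injective d'image un idéal), comme dans la discussion précédant le lemme. Par définition on a alors $\DD(\ALS)=\DD(P_{\centerdot}\otimes\OS)$, où $\otimes=\otimes_{\ZZ}$.

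Ensuite, dire que $S$ est plat sur $\Spec\ZZ$ revient exactement à dire que $\OS$ est un faisceau plat de $\ZZ$-modules, ce qui se vérifie fibre par fibre sur les $\OO_{S,s}$; le foncteur $-\otimes\OS$ sur les faisceaux de $\ZZ$-modules est donc exact. Comme $P_{\centerdot}\to A$ est un quasi-isomorphisme, on en déduit $H_n(P_{\centerdot}\otimes\OS)=H_n(P_{\centerdot})\otimes\OS$, qui vaut $A\otimes\OS$ en degré $0$ et $0$ ailleurs. Autrement dit, le morphisme de dg-anneaux $P_{\centerdot}\otimes\OS\to A\otimes\OS$, où $A\otimes\OS$ est concentré en degré $0$, est un quasi-isomorphisme.

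Un quasi-isomorphisme de dg-anneaux induisant une équivalence entre les catégories dérivées de modules, on obtient $\DD(P_{\centerdot}\otimes\OS)\xrightarrow{\sim}\DD(A\otimes\OS)$, ce qui montre que $A\otimes\OS$ est un représentant de $\ALS$. Le caractère canonique résulte alors de ce que $\DD(\ALS)$ est bien défini à équivalence près (\cite{I} VI 10.3.15) et de la naturalité des quasi-isomorphismes $P_{\centerdot}\otimes\OS\to A\otimes\OS$ par rapport aux changements de résolution. Il n'y a pas d'obstacle sérieux ici: le seul point à soigner est l'exactitude de $-\otimes\OS$, qui ne repose que sur la platitude de $\OS$, tout le reste étant formel; l'énoncé est essentiellement une reformulation de la définition de $\ALS$ dans le cas plat.
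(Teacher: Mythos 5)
Votre démonstration est correcte et suit essentiellement la même voie que celle de l'article : on tensorise la résolution $[P_1\to P_0]$ par $\OS$ et on utilise la platitude de $\OS$ sur $\ZZ$ (vérifiée fibre par fibre, c'est-à-dire par localisation en chaque point de $S$) pour obtenir que $P_{\centerdot}\otimes\OS\to A\otimes\OS$ est un quasi-isomorphisme. Vous explicitez simplement l'étape finale (un quasi-isomorphisme de dg-anneaux induit une équivalence des catégories dérivées, \cf \cite{I} VI 10.3.15) que l'article laisse implicite.
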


\begin{proof}
Soit $[P_1\to P_0]$ une résolution  de $A$
de la forme précédente.
Il suffit de voir que 
$$
0\to 
P_1\otimes\OS
\to P_0 \otimes\OS
\to A\otimes\OS
\to 0
$$
est exacte
en prenant la localisation en chaque point de $S$.
\end{proof}

\begin{coro} \label{DALS}
Si $S$ est plat sur $\Spec\ZZ$,
alors il existe  
une équivalence canonique 
$\DD^*(\ALS) \cong  \DD^*(A\otimes \OS)$
pour $*=\varnothing, +, -, b$.
\end{coro}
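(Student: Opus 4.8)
Le plan est de déduire ce corollaire du lemme \ref{ALS} en invoquant le principe standard selon lequel un quasi-isomorphisme de dg-anneaux induit une équivalence des catégories dérivées. D'abord, je rappellerais que par définition $\DD^*(\ALS)=\DD^*(P_{\centerdot}\otimes\OS)$, où $P_{\centerdot}=[P_1\to P_0]$ est une résolution plate de $A$ munie d'une structure de dg-anneau comme dans la discussion précédant le lemme \ref{ALS}. L'augmentation $P_{\centerdot}\to A$ induit un morphisme de dg-anneaux
$$
\phi: P_{\centerdot}\otimes\OS \to A\otimes\OS,
$$
où $A\otimes\OS$ est placé en degré $0$. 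Le lemme \ref{ALS} dit précisément que, lorsque $S$ est plat sur $\Spec\ZZ$, ce morphisme $\phi$ est un quasi-isomorphisme, puisque le complexe $[P_1\otimes\OS\to P_0\otimes\OS]$ n'a pour homologie que $A\otimes\OS$, en degré $0$.

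Ensuite j'appliquerais le fait général qu'un quasi-isomorphisme $\phi$ de dg-anneaux donne lieu à un couple de foncteurs adjoints, la restriction des scalaires $\phi_*$ et l'extension dérivée des scalaires $\LL\phi^*$, qui sont des équivalences quasi-inverses l'une de l'autre. Le point à vérifier est que l'unité et la counité de cette adjonction sont des isomorphismes. Pour le module libre de rang $1$, à savoir $P_{\centerdot}\otimes\OS$ vu comme module sur lui-même, l'unité s'identifie à $\phi$, qui est un quasi-isomorphisme; le cas d'un objet quelconque de $\DD(P_{\centerdot}\otimes\OS)$ s'en déduit par dévissage triangulé, et la counité se traite de façon symétrique. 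On peut aussi simplement renvoyer à \cite{I} VI 10.3.15, qui garantit déjà que $\DD(\ALS)$ est bien définie à équivalence canonique près.

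Enfin, je vérifierais la compatibilité avec les conditions de bornitude. Le foncteur $\phi_*$ étant exact, il préserve chacune des conditions $*=+,-,b$ (et trivialement le cas $*=\varnothing$); et $\LL\phi^*$, qui revient à tensoriser par le complexe de longueur $1$ donné par $[P_1\otimes\OS\to P_0\otimes\OS]$, est d'amplitude cohomologique bornée, donc préserve lui aussi ces conditions. La seule subtilité à surveiller est la canonicité de l'équivalence, c'est-à-dire son indépendance du choix de la résolution $P_{\centerdot}$; mais c'est exactement le contenu de la bonne définition de $\DD(A\otL B)$ à équivalence canonique près rappelée plus haut. Je ne m'attends donc à aucune difficulté technique réelle ici: tout le travail substantiel est déjà contenu dans le lemme \ref{ALS}.
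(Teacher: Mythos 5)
Votre démonstration est correcte et suit exactement la voie que le papier laisse implicite: le corollaire \ref{DALS} y est énoncé sans preuve, comme conséquence immédiate du lemme \ref{ALS} (qui fournit le quasi-isomorphisme de dg-anneaux $P_{\centerdot}\otimes\OS \to A\otimes\OS$) combiné au principe standard d'invariance des catégories dérivées par quasi-isomorphisme de dg-anneaux, la canonicité étant garantie par \cite{I} VI 10.3.15. Vous ne faites qu'expliciter ce dévissage, ce qui est fidèle à l'argument attendu.
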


\subsection{Le foncteur $\CCCC$}
\subsubsection{}
Dans cette section on rappelle la définition 
du foncteur $\CCCC$ de \cite{I}.
Ce foncteur est une composition de deux foncteurs
$\text{--}(1)$ et $\CCd\text{--}$.
On va définir la translation $\text{--}(1)$ dans \S 4.4.2,
puis le foncteur $\CCd\text{--}$ dans \S 4.4.3.

On fixe d'abord les notations 
pour les objets multi-simpliciaux
et les multi-diagrammes.
On note 
$\ddd$ la catégorie des simplexes standards.
Par définition les objets de $\ddd$ sont les ensembles finis
$[n]:=\{0,1,...,n\}$
pour tout $n\in\NNN$.
Les morphismes de $\ddd$ sont engendrés 
par les flèches
$$
\dd^i: [n]\to [n+1],\quad i=0,...,n+1
$$
$$
\sss^j: [n+1]\to [n],\quad j=0,...,n
$$
où $\dd^i$ est le morphisme croissant en oubliant $i$,
et $\sss^j$ est le morphisme croissant en répétant $j$.
On note $\ddo$ la catégorie opposée de $\ddd$.

On note $\ddb$ la catégorie des simplexes standards augmentés.
Par définition les objets de $\ddb$ 
sont tous les objets de $\ddd$
avec un objet supplémentaire:
l'ensemble vide $\varnothing$.
Les morphismes de $\ddb$ sont engendrés par 
les $\dd^i, \sss^j$ 
et l'unique morphisme $\varnothing \to [0]$.
On note $\ddp$ la catégorie des simplexes stricts augmentés
(\cite{I} VI 4.1).
C'est la sous-catégorie de $\ddb$
en gardant les mêmes objets
mais
en oubliant tous les morphismes \emph{non-injectifs}
(i.e. les morphismes composés par au moins un $\sss^j$).

Soit $T$ une catégorie, 
et $r\in\NNN$.
Par définition, un objet $r$-\emph{simplicial} de $T$
est un foncteur de $(\ddo)^r$ dans $T$.
On note $\Simp_r(T)$ la catégorie des objets $r$-simpliciaux de $T$.

On utilise la langage des multi-diagrammes
(\cite{I} VI 5.6 b)).
Par définition,
un $1$-\emph{diagramme} de $T$  est un foncteur 
d'une petite catégorie dans $T$.
Soit $X: I\to T$ un tel foncteur.
La catégorie $I$ est appelée le \emph{type} de $X$.
Pour deux $1$-diagrammes $X: I \to T$ et $Y: J\to T$,
un morphisme de $X$ vers $Y$ consiste en un couple 
$(u,v)$,
où $u$ est un foncteur de $I$ vers $J$,
et $v$ est un morphisme de foncteur entre 
$X$ et $Y\circ u$.
On note la catégorie des $1$-diagrammes de $T$ par $\Diag_1(T)$.
Par récurrence,
on peut définir pour tout $r\geq 1$ la catégorie 
des $r$-diagrammes de $T$ 
en prenant $\Diag_r(T)=\Diag_1(\Diag_{r-1}(T))$.
Le foncteur de type s'étend en un foncteur 
de $\Diag_r(T)$ à $\Diag_{r-1}(\Cat)$,
où $(\Cat)$ est la catégorie des petites catégories.

\subsubsection{}
Supposons que $T$ est une catégorie abélienne.
On rappelle la définition du foncteur 
$\text{--}(1): T\to \Diag_2(T)$ 
de \cite{I}.
On définit d'abord un $1$-diagramme 
$$\SSSS_1: \ddp\to(\Cat)$$
tel que pour tout $n\in\Nm$
(avec $[-1]:=\varnothing$),
$$
\SSSS_1([n])={(\ddo)}^{n+1},
$$
où  ${(\ddo)}^{0}=\pt$
est la catégorie pointée.
Pour tout $n$ 
et $i=0,...,n+1$,
le morphisme
$\dd^i: \SSSS_1([n])\to \SSSS_1([n+1])$
est défini par la formule
$$\dd^i(x_0,...,x_n)=(x_0,...,x_{i-1}, [1], x_i,...,x_n).$$
On voit que les $\dd^i$ vérifient bien
la loi de composition de $\ddp$,
donc $\SSSS_1$ est un foncteur.

On a une description concrète pour un  $2$-diagramme de type $\SSSS_1$.
Soit $X$ est un tel $2$-diagramme dans $T$.
Alors la donnée de $X$
est équivalente au système
$$
(X^r, \dd^{r,i})_{r\in\NNN, 0\leq i \leq r}
$$
où pour tout $r$ et tout $i$,
$X^r \in \Simp_r(T)$
et 
$\dd^{r,i}: X^r \to \partial^1_i X^{r+1}$ 
un morphisme de $\Simp_r(T)$
dont la loi de composition 
est déterminée par celle de $\ddp$. 
Ici $\partial^1_i$ est le foncteur de la 
$i$-ième face de degré $1$:
il est égal à la composition
avec $\dd^i: (\ddo)^r\to (\ddo)^{r+1}$.
(On peut prendre $X^r=X([r-1])$
pour tout $r\in\NNN$.)

Pour tout $r$, on a le foncteur de translation
des objets $r$-simpliciaux
$$
[1,...,1]: \Simp_r(T) \to \Simp_r(T).
$$
Via la correspondance de Dold-Kan,
il devient la translation usuelle des $r$-complexes.
La définition suivante apparaît dans \cite{I} VI 9.2.

\begin{defi} \label{translation}
On définit le foncteur
$$
\text{--}(1): T \to \Diag_2(T)
$$
tel que pour tout objet $X$ de $T$,
$X(1)$ est le $2$-diagramme de $T$ de type $\SSSS_1$
qui vérifie 
$$X(1)^r
=
X[1,...,1]$$ 
pour tout $r\in\NNN$, 
où le $X$ à droite est l'objet $r$-simplicial constant 
de valeur $X$;
pour $i=0,...,r$, on définit
$$
\dd^{r,i}=\id_{X(1)^r}.
$$
\end{defi}
\noindent
Voici un dessin de $X(1)$,
où on trouve $X(1)^r$ avec $r=0,1,2$.
$$
\xymatrix{
&X\oplus X
\ar@{.>}[r]^{\id}
\ar[d]
\ar@<1ex>[d]
\ar@<-1ex>[d]
&X\oplus X
\ar[r]
\ar@<1ex>[r]
\ar[d]
\ar@<1ex>[d]
\ar@<-1ex>[d]
&0
\ar[d]
\ar@<1ex>[d]
\ar@<-1ex>[d]\\
&X
\ar@{.>}[r]^{\id}
\ar[d]
\ar@<1ex>[d]
&X
\ar[d]
\ar@<1ex>[d]
\ar[r]
\ar@<1ex>[r]
&0
\ar[d]
\ar@<1ex>[d]\\
X
\ar@{.>}[ru]^{\id}
&0
\ar@{.>}[r]^{\id}
&0
\ar[r]
\ar@<1ex>[r]
&0
}
$$

\subsubsection{}

On rappelle la définition du foncteur $\CCd\text{--}$
dans \cite{I} VI 11.1.2.
Soit $T'$ une catégorie 
qui possède des sommes quelconques.
On décrit le foncteur
$$
\CCd\text{--}: \Diag_1(T')\to \Simp_1(T').
$$
Soit $U: I\to T'$ un $1$-diagramme de $T'$.
Alors $\CCd U$ est un objet simplicial de $T'$,
dont
le $n$-ième étage est 
$$
\CC_nU=
\mathop\bigsqcup\limits_
{r_0\to...\to r_n} U_{r_0}
$$
pour tout $n\in\NNN$,
où $r_0\to...\to r_n$ parcourt toutes les chaînes de longueur $n$
de flèches dans $I$.
Pour $i=0,...,n+1$, le morphisme
$\dd_i: \CC_{n+1}U\to \CC_nU$
envoie le facteur indexé par 
$r_0\to...\to r_{n+1}$
vers celui indexé par 
$r_0\to...\to r_{i-1}\to r_{i+1}\to ...\to r_{n+1}$
en composant les flèches $r_{i-1}\to r_i\to r_{i+1}$.
Sur le facteur $U_{r_0}$,
$\dd_0$ est le morphisme fonctoriel
$U_{r_0}\to U_{r_1}$,
et $\dd_i$ est l' identité si $i\geq 1$.
On a une description similaire pour les morphismes $\sss_j$
en répétant $r_j$ dans l'index.

Soit $T$ une catégorie abélienne.
On peut donc définir le foncteur 
$$\CCCC: T\to \Simp_1(\Diag_1 T)$$
comme la composition de 
$\text{--}(1)$ et $\CCd\text{--}$,
i.e. dans la définition de $\CCd\text{--}$,
on prend $U$ égal à 
$$X(1): \ddp\to \Diag_1(T),\quad
[n]\mapsto X(1)^{n+1}$$
pour un objet $X$ de $T$.
On obtient pour tout $n\in\NNN$,
$$
\CC_nX(1)=\mathop\bigsqcup\limits_
{[m_0]\to...\to [m_n]}
X(1)^{m_0+1}
$$
où $[m_0]\to...\to [m_n]$ parcourt toutes les chaînes 
de longueur $n$ des flèches de $\ddp$. 
\footnote{La somme $X\sqcup Y$ de deux objets $X: I\to T$ et $Y: J\to T$
de $\Diag_1(T)$
est l'extension évidente de $X$ et $Y$ à $I\sqcup J$,
la réunion disjointe des catégories $I$ et $J$.}
On remarque que $\CCd X(1)$ est un $2$-diagramme 
de type $\CCd\SSSS_1$.

\subsubsection{}
On note $\Mod_{\ZZ}$ la catégorie des $\ZZ$--modules.
Soit $A$ un anneau commutatif,
et on le voit comme un objet de  $\Mod_{\ZZ}$.
Les constructions de \S 4.4.2, \S 4.4.3 nous donnent les $2$-diagrammes 
$A(1)$ et $\CCd A(1)$ de $\Modz$.
La structure d'anneau sur $A$ définit les structures de monoïde
sur $A(1)$ et $\CCd A(1)$ 
qu'on rappelle ici
(\cf \cite{I} VI 11.1).

Le  $2$-diagramme $A(1)$ est 
de type $\SSSS_1: \ddp \to (\Cat).$
Il existe une structure de monoïde évidente 
sur la catégorie $\ddp$, telle que
la multiplication est définie par le foncteur
$$
\ddp\times\ddp\to\ddp,\quad ([m],[n])\mapsto [m+n+1],
$$
et l'élément neutre est l'objet $[-1]$.
Il existe une structure de monoïde 
sur $\SSSS_1$ au-dessus de $\ddp$:
la multiplication $\SSSS_1\times\SSSS_1\to\SSSS_1$
est les identifications
$(\ddo)^{m+1}\times (\ddo)^{n+1} =(\ddo)^{m+n+2}$.
Pour définir la structure de monoïde sur $A(1)$,
il suffit de définir une multiplication 
$A(1)\times A(1) \to A(1)$
dans $\Diag_2(\Modz)$,
qui est équivalente aux morphismes d'objets $(r+s)$-simpliciaux
$$
A(1)^r\times A(1)^s \to A(1)^{r+s}
$$
pour tout $r,s\in\NNN$.
Mais d'après la définition \ref{translation} 
on voit qu'un tel morphisme est
induit par la \emph{multiplication} de $A$
de façon évidente.
Par la fonctorialité de $\CCCC$,
la multiplication de $A(1)$ 
définit une multiplication
sur $\CCd A(1)$,
tel que $\CCd A(1)$ est un monoïde dans $\Simp_1(\Diag_1 \Modz)$.

On note $\ZZ[\CCd A(1)]$ l'objet de $\Simp_1(\Diag_1\Modz)$
obtenu en appliquant $\ZZ[\text{--}]$
sur chaque sommet de $\CCd A(1)$.
La multiplication de $\CCd A(1)$ induit 
un morphisme
$$
\ZZ[\CCd A(1)] \otimes \ZZ[\CCd A(1)]\to \ZZ[\CCd A(1)],
$$
tel que $\ZZ[\CCd A(1)]$ est muni d'une structure d'anneau
au sens de \cite{I} VI 11.2.2.
On peut considérer les $\ZZ[\CCd A(1)]$-modules:
par définition,
un $\ZZ[\CCd A(1)]$-\emph{module}
est un objet de $\Simp_1(\Diag_1\Modz)$ de type $\CCd\SSSS_1$
muni d'une action de $\ZZ[\CCd A(1)]$ 
par $\otimes$ comme plus haut.

Soit $S$ un schéma. 
On note $\Mod(\OS)$ la catégorie des faisceaux de $\OS$-modules.
On note $\Mod(\CCd\SSSS_1,\OS)$
la catégorie des
$2$-diagrammes de $\Mod(\OS)$ 
de type $\CCd\SSSS_1$,
et
$\Mod(\ZZ[\CCd A(1)]\otimes\OS)$
la catégorie des $\ZZ[\CCd A(1)]\otimes\OS$-modules.
On a un foncteur d'oubli naturel de 
$\Mod(\ZZ[\CCd A(1)]\otimes\OS)$ 
dans $\Mod(\CCd\SSSS_1,\OS)$.
Le foncteur $\CCCC$ induit 
un foncteur exact
$$
\CCCC:
\Mod(A\otimes\OS)
\to
\Mod(\ZZ[\CCd A(1)]\otimes\OS),\quad
V\mapsto\CCd V(1).
$$
La proposition suivante de \cite{I} est la propriété clé pour le foncteur
$\CCCC$,
où $\DD^*(...)$ est la catégorie dérivée de $\Mod(...)$.

\begin{prop} \label{Cun}
Supposons que $S$ est plat sur $\Spec\ZZ$.
Alors le foncteur 
$$
\CCCC:
\DD^{[0,1]}(A\otimes\OS)
\to
\Db(\ZZ[\CCd A(1)]\otimes\OS)
$$
est pleinement fidèle,
et son image essentielle se compose des objets $Y\in\Db(\ZZ[\CCd A(1)]\otimes\OS)$ tels que,
en tant qu'objet de $\Db(\CCd\SSSS_1,\OS)$,
$Y$ est dans l'image essentielle de 
$
\CCCC:
\DD^{[0,1]}(\OS)
\to
\Db(\CCd\SSSS_1,\OS)
$.
\end{prop}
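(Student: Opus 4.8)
Cet énoncé est dû à Illusie (\cite{I} VI 11.5.2.2) pour le grand site fpqc; le plan est d'en reprendre la démonstration, en observant que $\CCCC$ et $\Delred$ sont définis par des opérations purement algébriques au niveau des faisceaux, de sorte que rien dans l'argument ne dépend du choix du site. Comme $S$ est plat sur $\Spec\ZZ$, le lemme \ref{ALS} donne $A\otimes\OS=A\otL\OS$, et l'on peut donc remplacer partout $\otL$ par le produit tensoriel ordinaire et raisonner dans les catégories dérivées d'anneaux ordinaires.

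D'abord, d'après le diagramme commutatif de \S 4.2.3 transposé au site de Zariski, la composée $\Delred\circ\CCCC$ s'identifie à la restriction des scalaires le long de $\NN\ZZ^{\st}[A]\to A$. Comme $\TT\NN\ZZ^{\st}[A]$ est une résolution de $A$ (\cite{I} VI 11.4.1.5) qui reste une résolution après tensorisation par $\OS$, grâce à la platitude de $S$ sur $\Spec\ZZ$, cette restriction des scalaires induit une équivalence
$$
\DD^{[0,1]}(A\otimes\OS)\xrightarrow{\sim}\DD^{[0,1]}(\NN\ZZ^{\st}[A]\otimes\OS).
$$
Le plan est alors de montrer que $\Delred$ réalise un quasi-inverse de $\CCCC$ sur la sous-catégorie pleine de $\Db(\ZZ[\CCd A(1)]\otimes\OS)$ décrite dans l'énoncé, à savoir les objets dont le $2$-diagramme de type $\CCd\SSSS_1$ sous-jacent provient de $\DD^{[0,1]}(\OS)$: la pleine fidélité et la caractérisation de l'image essentielle en résulteraient simultanément. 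Concrètement, pour un tel $Y$ je poserais $V\in\DD^{[0,1]}(A\otimes\OS)$ l'objet correspondant à $\Delred(Y)$ par l'équivalence ci-dessus, et il s'agirait de produire un isomorphisme $\CCCC(V)\cong Y$.

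Le point où tout se joue est que cet isomorphisme doit valoir comme $\ZZ[\CCd A(1)]$-modules, et pas seulement comme $2$-diagrammes de type $\CCd\SSSS_1$. Pour le garantir on dispose des propriétés structurelles de $\CCCC$: ses valeurs sont des $2$-diagrammes de type $\CCd\SSSS_1$, le foncteur $\Delred$ est compatible à la structure de monoïde sur $\CCd A(1)$ (\cite{I} VI 9.5 et VI 11.5.2.2), et $\TT\NN\ZZ^{\st}[-]$ est quasi-isomorphe au foncteur identité (\cite{I} VI 11.4.1.5), ce qui permet de reconstruire fonctoriellement la structure de module. C'est exactement le contenu de \cite{I} VI 11.5.2.2. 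La difficulté principale, et la seule chose à vérifier pour l'adaptation, est que ces compatibilités à la structure d'anneau, tout comme l'équivalence de descente, sont de nature ponctuelle: elles se testent sur les anneaux locaux $\OO_{S,s}$ et sont donc insensibles au remplacement du grand site fpqc par le petit site de Zariski. La platitude de $S$ assure de plus que $A\otimes\OS$ représente $A\otL\OS$ (lemme \ref{ALS}), ce qui évite tout recours aux dg-anneaux dans ces vérifications.
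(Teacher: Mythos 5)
Your proposal is correct in substance but takes a much heavier route than the paper, and one of its justifications is off target. The paper's entire proof is one line: by Corollary \ref{DALS} (flatness of $S$ identifies $\DD^*(\ALS)$ with $\DD^*(A\otimes\OS)$), the proposition is \emph{literally a special case} of \cite{I} VI 11.5.2.4 applied with $\OO=\OS$ --- Illusie's Chapter VI is formulated for modules over an arbitrary ring $\OO$ of an arbitrary topos, with no reference to any site of schemes, so there is nothing to re-prove or adapt. Your plan instead re-runs Illusie's argument (the identification of $\Delred\circ\CCCC$ with restriction of scalars, the resolution $\TT\NN\ZZ^{\st}[A]\to A$, $\Delred$ as quasi-inverse on the subcategory); this is a faithful reconstruction of how the cited result is proved, but it is unnecessary work here. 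Two caveats. First, the reason the $\CCCC$-part transports to the Zariski site is \emph{not} that the compatibilities are tested on the local rings $\OO_{S,s}$: it is that the statements of \cite{I} VI 11.5 never mention a site at all, only a ringed topos; the stalk-wise argument is beside the point, and it is precisely for the \emph{other} functor, $\nerd$, that the big-versus-small site issue is genuine, because there the transition morphisms of the diagram $\Ner(\CCd A(1),\CCd S(1))$ intervene and $f^{-1}$ differs from $f^*$ on the small site --- hence the work of \S 4.6. Second, the crux of your plan --- producing the isomorphism $\CCCC(V)\cong Y$ as $\ZZ[\CCd A(1)]\otimes\OS$-modules --- is deferred to \cite{I} VI 11.5.2.2, but that reference (as used in \S 4.2.3) only supplies the commutative diagram, i.e.\ the easy half; the reconstruction of the module structure and the characterization of the essential image are the content of VI 11.5.2.4 itself. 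Once you permit yourself to cite that statement in its topos-general form, your whole re-derivation collapses to the paper's one-line proof.
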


\begin{proof}
Via le corollaire \ref{DALS},
c'est un cas particulier de \cite{I} VI 11.5.2.4
en prenant $\OO=\OS$.
\end{proof}

\subsection{Le foncteur $\nerd$}
\subsubsection{}
Soit $S$ un schéma et 
$A$ un anneau commutatif.
Par la construction de \S 4.4,
on a les $2$-diagrammes $\CCd A(1)$ et $\CCd S(1)$
de type $\CCd\SSSS_1$.
Dans cette section
on définit une action
de $\CCd A(1)$ sur $\CCd S(1)$
induite par l'action triviale de $A$ sur $S$.
On considère dans \S 4.5.2 et \S 4.5.4 le diagramme $\Ner$ et le foncteur $\nerd$ 
associé à cette action.

On définit d'abord $\CCd S(1)$.
On applique la définition \ref{translation} lorsque $T$ est 
la catégorie des faisceaux abéliens fpqc sur $S$,
et $X=S$.
On obtient le $2$-diagramme $S(1)$ de $T$,
tel que  pour tout $r\in\NNN$, 
$S(1)^r$ est le schéma $r$-simplicial constant de valeur $S$.
On note $(\Sch/S)$
la catégorie des schémas sur $S$,
et on voit $S(1)$ comme un $2$-diagramme dans $(\Sch/S)$.
En appliquant $\CCCC$
on obtient le $2$-diagramme $\CCd S(1)$.
Par la fonctorialité,
l'action triviale de $A$ sur $S$
induit une action
\footnote{Cette action est non-triviale et caractérisée par 
la structure de monoïde (non-triviale) de $\SSSS_1$.}
 de $A(1)$ sur $S(1)$
dans $\Diag_2(\Sch/S)$,
puis une action de 
$\CCd A(1)$ sur $\CCd S(1)$
dans $\Simp_1(\Diag_1 \Sch/S)$.

\subsubsection{}

On rappelle la construction du diagramme $\Ner$ 
par \cite{I} VI 2.5.1, VI 11.1.2.4
pour une action générale dans une catégorie $T$.
On suppose que $T$ possède des produits finis
et un objet final.
Soit $K$ un objet de $T$,
et $M$ un monoïde de $T$ qui agit sur $K$.
On lui associe l'objet simplicial
$$
\Ner(M,K)\in \Simp_1(T)
$$
dont on explique la définition.
Pour tout $n\in\NNN$,
son $n$-ième étage est 
$\Ner_n(M,K)=M^n\times K$.
On définit les morphismes
$\dd_i: \Ner_{n+1}(M,K)\to \Ner_n(M,K)$
par les formules
$$
\dd_0(a_0,...,a_n,x)=(a_1,...,a_n,a_0x)
$$
$$
\dd_i(a_0,...,a_n,x)=(a_0,...,a_{i}a_{i-1},...a_n,x),\quad
i=1,...,n
$$
$$
\dd_{n+1}(a_0,...,a_n,x)=(a_0,...,a_{n-1},x)
$$
pour tout $(a_0,...,a_n,x)\in M^{n+1}\times K$, 
et les $\sss_j: \Ner_{n}(M,K)\to \Ner_{n+1}(M,K)$ par
$$
\sss_j(a_0,...,a_{n-1},x)=(a_0,...,a_{j-1},1,a_j,...,a_{n-1},x),
\quad j=0,...,n
$$
pour tout $(a_0,...,a_{n-1},x)\in M^{n}\times K$.
Quand on prend $T=\Simp_1(\Diag_1 \Sch/S)$,
l'action de $\CCd A(1)$ sur $\CCd S(1)$ 
définit l'objet
$$
\Ner(\CCd A(1), \CCd S(1))
\in
\Simp_2(\Diag_1 \Sch/S).
$$

\subsubsection{}
Soit $X$ un multi-diagramme de schémas.
Dans cette section on définit une catégorie $\Mod(X)$
de systèmes de faisceaux sur $X$
qu'on utilise dans la définition \ref{nerd}.
La catégorie $\Mod(X)$ est similaire au
topos total $\Topd$ de  \cite{I} VI 5.2,
mais elle n'est pas un cas particulier de $\Topd$.

\begin{nota}
Pour tout morphisme de schéma $f:X\to Y$,
on note $f^{-1}$ le foncteur 
d'image inverse ensembliste,
et $f^*$ l'image inverse de $\OX$-module,
dans tous les topos: Zariski, fpqc, \etc.

\end{nota}

On remarque que dans \cite{I} VI, VII, 
Illusie utilise $f^*$ pour l'image inverse ensembliste,
parce qu'il travaille avec un grand site
(donc $f^{-1}=f^*$).
Dans notre cas,
nous travaillons avec un petit site
et $f^{-1}$ et $f^*$ ne sont pas égaux.
On note que les théorèmes dans \cite{I} VI 8,
qui sont écrits avec $f^*$,
ne sont valables que pour notre $f^{-1}$.

On donne la définition de $\Mod(X)$.
On considère le cas où $X$ est un $1$-diagramme,
avec une généralisation évidente sur les $r$-diagrammes pour tout $r$.
On suppose que $X$ est un $1$-diagramme de schémas de type $I$.
On note $\ob I$ l'ensemble des objets de $I$,
et $\fl I$ l'ensemble des flèches de $I$.
Pour toute flèche $f: i\to j$ de $I$,
on note $f: X_i\to X_j$
le morphisme de schémas associé.
On rappelle que $\Mod(\OO_{Y})$ est la catégorie
des faisceaux (Zariskiens) de $\OO_{Y}$-modules
pour tout schéma $Y$.

\begin{defi} \label{ModX}
On note $\Mod(X)$
la catégorie des systèmes
$(M_i, M_f)_{i\in \ob I, f\in\fl I}$,
où
$M_i$ appartient à $\Mod(\OO_{X_i})$
pour tout $i$, et
$M_f: M_i\to f^*M_j$
est un morphisme de $\Mod(\OO_{X_i})$
pour toute  $f: i\to j$,
tel que les $M_f$ vérifient 
la condition de cocycle évidente.
Les morphismes de $\Mod(X)$
sont les morphismes de système $(M_i)$
compatibles avec tous les $M_f$.
\end{defi}

On appelle $M_f$ les morphismes \emph{de transition}.
La catégorie $\Mod(X)$ est abélienne
lorsque tous les $f:X_i\to X_j$ sont plats.
Dans ce cas on note $\DD^*(X)$
la catégorie dérivée de $\Mod(X)$.

\subsubsection{}
On donne la définition du foncteur $\nerd$.
On note
$$\dd_0,\dd_1:
\CCd A(1)\times\CCd S(1)
\rightrightarrows
\CCd S(1)$$
les différentielles entre les deux derniers étages de 
$\Ner(\CCd A(1), \CCd S(1))$:
$\dd_0$ est induite par l'action de $\CCd A(1)$ sur $\CCd S(1)$,
et $\dd_1$ est la projection sur $\CCd S(1)$.
Alors pour tout objet $M$ de $\Mod(\ZZ[\CCd A(1)]\otimes\OS)$,
$M$ est un faisceau sur $\CCd S(1)$
tel que
la $\CCd A(1)$-structure de $M$
est équivalente à un morphisme de faisceau sur $\CCd A(1)\times\CCd S(1)$:
$$a_M: \dd_1^*M\to\dd_0^*M,$$
qui vérifie la condition de cocycle de \cite{I} VI 8.1.1.
Pour tout $n\in\NNN$, on note
$$\pp_n:
\CCd A(1)^n\times\CCd S(1)
\to
\CCd S(1)
$$
la projection  
sur $\CCd S(1)$.

\begin{defi} \label{nerd}
On définit le foncteur
$$
\nerd:
\Mod(\ZZ[\CCd A(1)]\otimes\OS)
\to
\Mod(\Ner(\CCd A(1), \CCd S(1)))
$$
comme suit.
Pour tout $M\in\Mod(\ZZ[\CCd A(1)]\otimes\OS)$ 
et tout $n\in\NNN$,
son $n$-ième étage 
est défini par
$$
\nerd_n(M)=\pp_n^*M.
$$
Les morphismes de transition 
sont définis de la manière suivante (\cite{I} VI 8.1.5): 
$\nerd_{n+1}(M) \to \dd_i^*\nerd_{n}(M)$
est égal à l'identité pour $i=1,...,n+1$,
et à $(\dd_2...\dd_{n+1})^*a_M$ pour $i=0$;
$\nerd_{n}(M) \to \sss_j^*\nerd_{n+1}(M)$
est égal à l'identité pour $j=0,...,n$.
\end{defi}

On note que $\nerd$ est un foncteur exact,
et on utilise la même notation pour son foncteur dérivé.
La proposition suivante est la clé pour définir les complexes de Lie
sur le topos de Zariski.
Sa démonstration est l'objectif de \S 4.6.

\begin{prop} \label{nerdfidele}
Le foncteur 
$$
\nerd:
\Db(\ZZ[\CCd A(1)]\otimes\OS)
\to
\Db(\Ner(\CCd A(1), \CCd S(1)))
$$
est pleinement fidèle,
et son image essentielle se compose des objets $Y\in\Db(\Ner(\CCd A(1), \CCd S(1)))$,
tels que pour tout $i\in\ZZ$,
l'objet $\HH^i(Y)$ de $\Mod(\Ner(\CCd A(1), \CCd S(1)))$
est dans l'image essentielle de 
$\nerd$ pour les $\Mod(...)$.
\end{prop}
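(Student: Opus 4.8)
Le plan est de se ramener au théorème de descente \cite{I} VI 8.4.2.1, qui établit précisément une telle pleine fidélité ainsi que la description cohomologique de l'image essentielle, mais pour un \emph{grand} site. L'unique obstacle à l'invoquer tel quel est que, sur le petit site de Zariski, un morphisme de transition $f$ vérifie $f^{-1}\neq f^*$, alors que toute la machinerie de \cite{I} VI 8 repose sur l'égalité de ces deux foncteurs. On la contourne au moyen du site des isomorphismes locaux $\Sloc$, introduit en \S 4.6 : ses objets sont les isomorphismes locaux au-dessus de $S$, son topos $\Sloct$ est canoniquement équivalent au topos de Zariski $\SZart$ (d'où une équivalence $\Db(\OS)\simeq\Db(\OO_{\Sloc})$), et surtout tout morphisme entre deux de ses objets est lui-même un isomorphisme local, donc plat et vérifiant $f^{-1}=f^*$.

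On transporterait alors toute la donnée le long de cette équivalence : les $2$-diagrammes $\CCd A(1)$ et $\CCd S(1)$, avec la structure de monoïde de $\CCd A(1)$, le nerf $\Ner(\CCd A(1),\CCd S(1))$, la catégorie $\Mod(\Ner(\CCd A(1),\CCd S(1)))$ et le foncteur $\nerd$. Les morphismes structuraux du nerf — les $\dd_i$, les $\sss_j$ et les projections $\pp_n$ — devenant des isomorphismes locaux, on a $f^{-1}=f^*$ pour chacun d'eux, et la condition de cocycle de \cite{I} VI 8.1.1 portant sur $a_M$ prend exactement la forme requise par \loc. On est ainsi placé dans le cadre de \cite{I} VI 8.4, les énoncés de \loc écrits avec $f^*$ étant ici légitimes puisque $f^*=f^{-1}$ sur $\Sloc$. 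Le théorème \cite{I} VI 8.4.2.1, déjà formulé au niveau des catégories dérivées bornées, se transporte alors directement et fournit à la fois la pleine fidélité de $\nerd$ et la caractérisation de son image essentielle ; la condition portant sur chaque $\HH^i(Y)$ y apparaît naturellement car $\nerd$ est exact et commute donc à la cohomologie, l'exactitude des images inverses $f^*=f^{-1}$ sur $\Sloc$ garantissant de plus que la sous-catégorie des objets de descente effective est stable par passage aux $\HH^i$.

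La difficulté principale sera de vérifier soigneusement deux points. D'une part que l'équivalence $\Sloct\simeq\SZart$ transporte effectivement $\nerd$ en son analogue sur $\Sloc$, tout en respectant la structure de monoïde de $\CCd A(1)$ et les $2$-diagrammes en jeu. D'autre part, et c'est le point central, que \emph{chaque} étape de la preuve de \cite{I} VI 8.4 — qui utilise librement la propriété $f^*=f^{-1}$ des grands sites — demeure valide sur $\Sloc$ ; c'est exactement la subtilité signalée après la notation de \S 4.5.3, à savoir que les théorèmes de \cite{I} VI 8 ne valent pour nous qu'avec $f^{-1}$. Une fois ce dictionnaire établi avec soin, le reste de l'argument est formel.
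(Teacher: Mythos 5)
Votre stratégie est bien celle du texte : passer par le site des isomorphismes locaux pour neutraliser la différence entre $f^{-1}$ et $f^*$, invoquer le théorème \ref{thmillusie} (\cite{I} VI 8.4.2.1), puis redescendre au topos de Zariski via l'équivalence $\Sloct\simeq\SZart$. Mais il y a une lacune réelle : vous n'énoncez ni ne vérifiez jamais les hypothèses de ce théorème, alors que c'est là que se situe l'essentiel du travail. Le théorème exige que le topos fibré simplicial $X$, ici $X_n=\Topd{\Ner_n(\CCd A(1),\CCd S(1))}_{\lloc}$, soit \emph{bon} (les $\dd_i^{-1}$ et $\sss_j^{-1}$ admettent des adjoints à gauche), soit une \emph{pseudo-catégorie} (le morphisme de changement de base $(\dd_0...\dd_0)^{-1}{\dd_1}_* \to {\dd_{n+1}}_*(\dd_0...\dd_0)^{-1}$ est un isomorphisme), et que $(\dd_0...\dd_0)^{-1}$ envoie les injectifs sur des injectifs. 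C'est le contenu de la proposition \ref{nerdloc} : les conditions a) et c) viennent de ce que les $\dd_i,\sss_j$ du nerf sont des isomorphismes locaux, donc des morphismes de \emph{localisation} pour le site loc (d'où des adjoints à gauche exacts et la préservation des injectifs), et la condition b) vient de ce que, dans le carré pertinent, la ligne supérieure est une réunion disjointe de la ligne inférieure, les flèches verticales étant les projections. Sans cette vérification, l'application du théorème n'est pas justifiée.

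De plus, votre « difficulté principale » --- revérifier que chaque étape de la preuve de \cite{I} VI 8.4 reste valide sur $\Sloc$ --- est mal dirigée. Le théorème \ref{thmillusie} est un énoncé général sur les topos fibrés simpliciaux, formulé uniquement en termes des foncteurs $f^{-1}$ des morphismes de topos : aucune image inverse de modules $f^*$ n'intervient dans son énoncé, et il s'utilise comme une boîte noire dès que ses hypothèses sont acquises. La difficulté $f^{-1}$/$f^*$ vit entièrement en dehors du théorème : il s'agit d'identifier les catégories de la proposition \ref{nerdfidele} (définies avec $f^*$, \cf la définition \ref{ModX}) à $\Db(\BB X)$ et $\Db(\Topd X)$ (définies avec $f^{-1}$). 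C'est ce que font le corollaire \ref{loczarmod} et le lemme \ref{loczarf} (pour les flèches verticales du diagramme final de la preuve), joints à l'identification $\Db(\BB X)\cong\Db(\ZZ[\CCd A(1)]\otimes\OO_{\Sloc})$ de \cite{I} VI 11.5.3.2, qui manque également à votre esquisse.
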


\subsection{Preuve de la proposition \ref{nerdfidele}}

\subsubsection{}
Notre démonstration pour la proposition \ref{nerdfidele}
est une application (indirecte) d'un théorème d'Illusie
\cite{I} VI 8.4.2.1.
Pour rappeler ce théorème on fixe quelques notations.
On appelle $X$ un \emph{topos fibré simplicial}
(ou un topos fibré au-dessus de $\ddo$)
si $X$ consiste en la donnée suivante:
un topos $X_n$ pour tout $n\in\NNN$ ,
et les morphismes de topos
$$
\dd_i: X_{n+1} \to X_n 
\quad \quad \sss_j: X_n\to X_{n+1}
$$
pour $i=0,...,n+1$ et $j=0,...,n$,
tel que les $\dd_i, \sss_j$ vérifient la loi de composition 
dans $\ddo$.
Pour un topos fibré simplicial $X$,
on appelle son \emph{topos classifiant} $\BB X$ 
la catégorie des couples 
$(M,a_M)$,
où $M$ est un objet de $X_0$,
et $a_M: \dd_1^{-1}M \to \dd_0^{-1}M$
est un morphisme dans $X_1$
qui vérifie la condition de cocycle 
(\cf \cite{I} VI 8.1.1).
On appelle son \emph{topos total} $\Topd X$ 
la catégorie des systèmes 
$(M_n, M_f)_{n\in\NNN,f\in\fl\ddo}$,
où pour tout $n\in\NNN$,
$M_n$ est un objet de $X_n$,
et pour tout $f: [m]\to [n]$ dans $\ddo$, 
$$
M_f: M_m \to f^{-1}M_n
$$
est un morphisme de $X_m$ 
qui vérifie la condition de cocycle évidente.
On remarque que la définition de $\Topd$
se généralise à un topos fibré sur une catégorie quelconque.
Les mêmes formules que dans la définition \ref{nerd} (en remplaçant ${}^*$ par ${}^{-1}$)
définissent un foncteur
$$
\nerd: \BB X \to \Topd X.
$$

Soit $\OO$ un anneau de $\BB X$
tel que $a_{\OO}$ est un isomorphisme.
On note  $\Mod(\BB X)$ 
la catégorie 
des $\OO$-modules dans $\BB X$,
et $\Mod(\Topd X)$ la catégorie des 
$\nerd\OO$-modules dans $\Topd X$.
Le foncteur $\nerd$ induit naturellement un foncteur exact
$$
\nerd: \Mod(\BB X) \to \Mod(\Topd X).
$$

On rappelle deux conditions techniques 
dans l'énoncé de \cite{I}  VI 8.4.2.1.
Pour un topos fibré simplicial $X$,
on dit que $X$ est \emph{bon} si tous les 
$\dd_i^{-1}$ et $\sss_j^{-1}$
admettent des adjoints à gauche.
On dit que $X$ est une \emph{pseudo-catégorie}
si pour tout $n\in\NNN$,
le morphisme de changement de base
$$
(\dd_0...\dd_0)^{-1}{\dd_1}_* \to {\dd_{n+1}}_*(\dd_0...\dd_0)^{-1}
$$
relatif au carré
$$
\xymatrix{
X_{n+1}
\ar[r]^{\dd_0...\dd_0}
\ar[d]_{\dd_{n+1}}
&X_1
\ar[d]^{\dd_{1}}\\
X_n
\ar[r]^{\dd_0...\dd_0}
&X_0
}
$$
est un isomorphisme.
Le théorème suivant est démontré dans \cite{I}  VI 8.4.2.1.

\begin{theo}[Illusie] \label{thmillusie}
Soit $X$ un topos fibré simplicial,
et $\OO$ un anneau de $\BB X$ comme précédemment.
Supposons que $X$ est une bonne pseudo-catégorie,
telle que pour tout $n\in\NNN$,
$(\dd_0...\dd_0)^{-1}$
envoie les injectifs de $\Mod(X_0)$ 
sur des injectifs de $\Mod(X_n)$.
Alors 
$$
\nerd: \Db(\BB X) \to \Db(\Topd X)
$$
est pleinement fidèle,
et son image essentielle se compose des objets 
$Y\in\Db(\Topd X)$,
tels que pour tout $i\in\ZZ$,
$\HH^i(Y)$ 
est dans l'image essentielle du foncteur
$\nerd: \Mod(\BB X) \to \Mod(\Topd X)$.
\end{theo}

\subsubsection{}
On explique pourquoi la proposition \ref{nerdfidele}
n'est pas un corollaire direct du théorème \ref{thmillusie}.
L'idée naïve est d'utiliser le topos fibré simplicial $X$
tel que pour tout $n$,
$$X_n=\Topd{\Ner_n(\CCd A(1),\CCd S(1))}_{\Zar}$$
le $\Topd$ du petit topos de Zariski de 
$\Ner_n(\CCd A(1),\CCd S(1))$.
Le problème est que les définitions des morphismes de transition
pour les catégories $\Topd X$ et $\Mod(X)$ 
(\cf \S 4.6.1 et la définition \ref{ModX})
ne sont pas les mêmes 
(à cause de la différence entre $f^{-1}$ et $f^*$).
Donc avec ce choix de $X$,
$\Db(\Topd X)$ n'est pas la même catégorie que celle dans la proposition 
\ref{nerdfidele}.

Néanmoins, on observe que
$f^{-1}$ et $f^*$
coïncident lorsque $f$ 
appartient au site ambiant.
Un calcul direct à partir de la définition implique que
tous les morphismes
dans $\Ner(\CCd A(1), \CCd S(1))$
sont des morphismes dans $(\Sch/S)$
de la forme $A^m \times S \to A^n\times S$
pour certains $m$ et $n$.
Ces morphismes sont des isomorphismes locaux 
(au sens de \S 4.6.3)
dont le topos ambiant est équivalent au topos de Zariski.
Donc essentiellement 
on peut appliquer
le théorème \ref{thmillusie} 
au topos de Zariski
dans la proposition \ref{nerdfidele}
(via le site des isomorphismes locaux).

\subsubsection{}

On définit la notion  d'isomorphisme local.
On dit qu'un morphisme de schéma $f:Y\to X$ 
est un \emph{isomorphisme local},
s'il existe un recouvrement ouvert
$(V_i)_{i\in I}$ de $Y$,
tel que pour tout $i$,
la restriction de $f$ sur $V_i$
est une immersion ouverte.

\begin{defi} \label{loc}
Soit $X$ un schéma.
On définit le site $\Xloc$
de la manière suivante:
comme une catégorie,
on a 
$$
\Xloc
=
\{f:Y\to X \text{ isomorphisme local}
\};
$$
les familles couvrantes d'un objet $Y$ de $\Xloc$
sont toutes les familles
$(f_\alpha: Y_\alpha \to Y)_{\alpha\in A}$
de morphismes dans $\Xloc$
telles que 
$\mathop\bigcup\limits_{\alpha}
f_\alpha(Y_\alpha)
=Y$.
On appelle $\Xloc$ le \emph{site loc} de $X$.
\end{defi}

On note $\XZar$ 
le petit site de Zariski de $X$,
et $\Xloct,\XZart$ les topos associés
aux sites $\Xloc$ et $\XZar$, respectivement.
Le morphisme de site naturel $\XZar \to \Xloc$
définit un morphisme de topos
$$
\delta: \Xloct
\to 
\XZart.
$$
La donnée de $\delta$ consiste en un couple 
de foncteurs adjoints $(\delta^{-1},\delta_*)$,
où
$\delta_*:\Xloct \to \XZart$
est la restriction 
d'un faisceau sur $\Xloc$ à $\XZar$,
et
$\delta^{-1}:\XZart\to\Xloct$
est la faisceautisation d'un faisceau sur $\XZar$.
On montre que $\delta$ est une équivalence de topos.

\begin{prop} \label{loczariski}
Les foncteurs $\delta_*$ et $\delta^{-1}$
sont quasi-inverses entre eux.
\end{prop}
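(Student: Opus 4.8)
The plan is to deduce the statement from the fact that $\XZar$ sits inside $\Xloc$ as a full subcategory which is topologically generating and carries the induced topology; then a comparison lemma for topoi (in the spirit of SGA 4 III 4.1) shows that restriction is an equivalence. All the content lies in checking these hypotheses, and they follow from the sole fact that a local isomorphism is, by definition, locally on its source an open immersion.

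First I would record the ambient categorical structure. The inclusion $\XZar\hookrightarrow\Xloc$ is fully faithful, and $\Xloc$ admits fibre products over $X$: if $f\colon Y\to X$ and $g\colon Z\to X$ are local isomorphisms, then so is $Y\tim{X}Z\to X$, as one sees locally where $f$ and $g$ become open immersions. In particular, for $U,V\in\XZar$ one has $U\tim{X}V=U\cap V\in\XZar$, so the sheaf conditions on both sites only involve intersections of opens. This also shows that the topology induced on $\XZar$ by $\Xloc$ is exactly the Zariski topology: a family of open immersions $(U_i\to U)$ is a covering in $\Xloc$ if and only if it is jointly surjective, i.e. a Zariski cover.

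The geometric input is topological generation together with a refinement property. Given any object $f\colon Y\to X$ of $\Xloc$, the definition of local isomorphism provides an open cover $(V_i)$ of $Y$ with each $f|_{V_i}$ an open immersion; then each composite $V_i\hookrightarrow Y\to X$ is an open immersion, hence $V_i\in\XZar$, and $(V_i\to Y)$ is a covering family of $\Xloc$. Applying this to the source of each arrow shows that every covering family $(Y_\alpha\to U)$ in $\Xloc$ of an object $U\in\XZar$ is refined by a Zariski cover of $U$ by objects of $\XZar$. These are precisely the hypotheses needed for the comparison lemma, and the main obstacle of the proof is the careful verification that they hold in the present set-up, rather than any deep argument.

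Concretely, I would then make the equivalence explicit. For a sheaf $G$ on $\Xloc$ and a cover $(V_i)$ of $Y$ as above, the sheaf axiom identifies $G(Y)$ with the equalizer of $\prod_i G(V_i)\rightrightarrows\prod_{i,j}G(V_i\cap V_j)$, an expression involving only the objects $V_i$ and $V_i\cap V_j$ of $\XZar$; this shows that $\delta_*$ is fully faithful. For essential surjectivity I would take a sheaf $F$ on $\XZar$ and define $\hat F(Y)$ by the same equalizer formula with $G$ replaced by $F$; the compatibility of fibre products recorded above, together with the sheaf property of $F$, shows that $\hat F$ is well defined independently of the chosen cover and is a sheaf on $\Xloc$ restricting to $F$ on $\XZar$. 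By the adjunction this extension is canonically $\delta^{-1}F$, whence $\delta_*$ and $\delta^{-1}$ are mutually quasi-inverse, as claimed.
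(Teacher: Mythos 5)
Your proposal is correct and follows essentially the same route as the paper: the paper's proof also observes that every object of $\Xloc$ is covered by objects of $\XZar$ and then invokes the comparison theorem of SGA~4 III 4.1. You simply spell out in more detail the verification of its hypotheses (full, topologically generating subcategory with the induced topology) and the explicit description of the quasi-inverse, which the paper leaves implicit.
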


\begin{proof}
Comme tout objet de $\Xloc$
est recouvert par des objets de $\XZar$,
c'est un cas particulier de 
\cite{S4} III, théorème 4.1.
\end{proof}

Pour tout schéma $X$, 
on note $\OO_{\Xloc}$
le faisceau $\delta^{-1}\OX$ sur $\Xloc$,
et $\Mod(\OO_{\Xloc})$ 
la catégorie des $\OO_{\Xloc}$-modules.
On obtient

\begin{coro} \label{loczarmod}
Le foncteur 
$\delta_*: \Mod(\OO_{\Xloc}) \to \Mod(\OX)$
est une équivalence de catégorie.
\end{coro}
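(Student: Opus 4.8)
Le plan est de déduire ce corollaire de la proposition \ref{loczariski}, en observant que l'égalité $\OO_{\Xloc}=\delta^{-1}\OX$ promeut l'équivalence de topos $\delta$ en une équivalence de topos \emph{annelés}. D'abord je rappellerais que, d'après la proposition \ref{loczariski}, les foncteurs $\delta^{-1}$ et $\delta_*$ sont quasi-inverses l'un de l'autre, de sorte qu'on dispose d'isomorphismes naturels $\delta_*\delta^{-1}\cong\id_{\XZart}$ et $\delta^{-1}\delta_*\cong\id_{\Xloct}$. En appliquant le premier à $\OX$ et en utilisant la définition $\OO_{\Xloc}=\delta^{-1}\OX$, j'obtiendrais un isomorphisme canonique d'anneaux $\delta_*\OO_{\Xloc}\cong\OX$. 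Puisque $\delta_*$ est lax-monoïdal, il envoie alors un $\OO_{\Xloc}$-module sur un module sur $\delta_*\OO_{\Xloc}\cong\OX$, ce qui est précisément le foncteur de l'énoncé.

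Ensuite je construirais un quasi-inverse de $\delta_*$ au niveau des modules. L'observation clé est que, puisque $\OO_{\Xloc}$ \emph{coïncide} avec $\delta^{-1}\OX$ (et ne lui est pas seulement relié par un morphisme), l'image inverse des modules $\delta^*$ se réduit à $\delta^{-1}$: comme $\delta^{-1}$ est monoïdal pour le produit tensoriel des faisceaux, pour tout $\OX$-module $N$ le faisceau $\delta^{-1}N$ est naturellement muni d'une structure de $\delta^{-1}\OX=\OO_{\Xloc}$-module. Ceci définit un foncteur $\delta^{-1}:\Mod(\OX)\to\Mod(\OO_{\Xloc})$, que je proposerais comme quasi-inverse.

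Enfin je vérifierais que $\delta_*$ et $\delta^{-1}$ sont quasi-inverses. Les isomorphismes de faisceaux sous-jacents $\delta_*\delta^{-1}N\cong N$ et $\delta^{-1}\delta_*M\cong M$ sont fournis directement par la proposition \ref{loczariski}; il resterait à voir qu'ils sont respectivement $\OX$-linéaire et $\OO_{\Xloc}$-linéaire. La seule partie non purement formelle est donc cette compatibilité avec les actions des anneaux, que je considère comme l'obstacle principal, même s'il est essentiellement routinier: elle résulte de la naturalité des unités et coünités d'adjonction et de la structure lax-monoïdale de $\delta_*$, une fois l'isomorphisme $\delta_*\OO_{\Xloc}\cong\OX$ acquis. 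En somme, une fois constaté que l'équivalence de topos de la proposition \ref{loczariski} respecte les structures d'anneau, l'équivalence des catégories de modules en découle formellement.
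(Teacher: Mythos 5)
Votre preuve est correcte et suit exactement la voie que le papier sous-entend: le corollaire y est énoncé sans démonstration, comme conséquence formelle immédiate de la proposition \ref{loczariski} et de la définition $\OO_{\Xloc}=\delta^{-1}\OX$, et votre argument ne fait qu'expliciter ces détails de routine (l'isomorphisme $\delta_*\OO_{\Xloc}\cong\OX$, la structure de $\OO_{\Xloc}$-module sur $\delta^{-1}N$, et la compatibilité des isomorphismes d'adjonction avec les actions des anneaux).
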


Soit $f: Y\to X$ un morphisme de schéma quelconque.
Alors $f$ induit un morphisme de site $\Xloc \to \Yloc$
par le changement de base via $f$,
et un morphisme de topos
 $f: \Yloct \to \Xloct$,
qui consiste en un couple de foncteurs adjoints
$$
f^{-1}: \Xloct \to \Yloct
\quad\quad 
f_*: \Yloct \to \Xloct,
$$
où $f_*$ est la restriction d'un faisceau
sur $\Yloc$ à $\Xloc$,
et $f^{-1}$ est défini d'une façon similaire 
à l'image inverse ensembliste usuelle 
pour le site de Zariski.
Lorsque $f$ est un isomorphisme local,
on a $f^{-1}\OO_{\Xloc}=\OO_{\Yloc}$,
donc $f^{-1}$ induit un foncteur  
$f^{-1}: \Mod(\OO_{\Xloc}) \to \Mod(\OO_{\Yloc})$.
On vérifie aisément que

\begin{lemm} \label{loczarf}
Si $f: Y\to X$ est un isomorphisme local,
alors le diagramme 
$$
\begin{CD}
\Mod(\OX)       @>f^*>>          \Mod(\OY)\\
@VV\delta^{-1}V        @VV\delta^{-1}V\\
\Mod(\OO_{\Xloc})   @>f^{-1}>>      \Mod(\OO_{\Yloc})
\end{CD}
$$
est commutatif.
\end{lemm}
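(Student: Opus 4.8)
Le plan est de décrire explicitement le foncteur $\delta^{-1}$ sur les objets du site $\Xloc$, puis de comparer directement les deux foncteurs composés de l'énoncé. Je commencerais par établir que, pour tout $\mathcal{F}\in\Mod(\OX)$, le $\OO_{\Xloc}$-module $\delta^{-1}\mathcal{F}$ est donné par la formule
$$
(\delta^{-1}\mathcal{F})(g:U\to X)=\Gamma\bigl(U,\, g^{*}\mathcal{F}\bigr),
$$
où l'on utilise que, $g$ étant un isomorphisme local, $g^{*}\mathcal{F}$ est un faisceau de Zariski bien défini sur $U$; les morphismes de transition sont les images inverses évidentes. Pour justifier cette formule, il suffit de remarquer que le membre de droite définit bien un faisceau sur $\Xloc$ — la condition de recollement pour une famille couvrante de $U$ dans $\Xloc$ est exactement la condition de faisceau de Zariski pour $g^{*}\mathcal{F}$, valable grâce à la proposition \ref{loczariski} appliquée à $U$ — et que sa restriction à $\XZar$ redonne $\mathcal{F}$; par le corollaire \ref{loczarmod}, ce faisceau est donc isomorphe à $\delta^{-1}\mathcal{F}$.

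Ensuite, comme $f$ est un isomorphisme local on dispose de l'égalité $f^{-1}\OO_{\Xloc}=\OO_{\Yloc}$, de sorte que $f^{-1}$ induit bien un foncteur sur les catégories de modules. Pour $\mathcal{G}\in\Mod(\OO_{\Xloc})$, le faisceau $f^{-1}\mathcal{G}$ est associé au préfaisceau $(h:W\to Y)\mapsto\mathcal{G}(f\circ h)$ — bien défini puisque $f\circ h$ est encore un isomorphisme local —, et ce préfaisceau est déjà un faisceau par la proposition \ref{loczariski}, d'où $(f^{-1}\mathcal{G})(h)=\mathcal{G}(f\circ h)$. En combinant avec la description du premier paragraphe, je calcule pour $\mathcal{F}\in\Mod(\OX)$ et $(h:W\to Y)\in\Yloc$
$$
(f^{-1}\delta^{-1}\mathcal{F})(h)=(\delta^{-1}\mathcal{F})(f\circ h)=\Gamma\bigl(W,\,(f\circ h)^{*}\mathcal{F}\bigr),
$$
tandis que
$$
(\delta^{-1}f^{*}\mathcal{F})(h)=\Gamma\bigl(W,\, h^{*}(f^{*}\mathcal{F})\bigr).
$$
L'isomorphisme de foncteurs cherché résulte alors de l'isomorphisme canonique de fonctorialité $(f\circ h)^{*}\cong h^{*}\circ f^{*}$, que l'on vérifie compatible avec les morphismes de transition.

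La seule vérification non formelle — ce qui justifie le «\,on vérifie aisément\,» — est la description explicite de $\delta^{-1}$ du premier paragraphe, c'est-à-dire le fait que la formule $(g:U\to X)\mapsto\Gamma(U, g^{*}\mathcal{F})$ fournit déjà un faisceau sur $\Xloc$ sans faisceautisation supplémentaire; c'est précisément là qu'intervient l'équivalence de topos de la proposition \ref{loczariski}. Une fois ce point acquis, tout le reste relève de la fonctorialité de l'image inverse. On pourrait aussi raisonner en se ramenant, via un recouvrement ouvert $(V_i)$ de $Y$ tel que chaque restriction de $f$ à $V_i$ soit une immersion ouverte, au cas où $f$ est une immersion ouverte — cas pour lequel $f^{*}$ et $f^{-1}$ coïncident tautologiquement avec la restriction et commutent à $\delta^{-1}$ —, mais le calcul direct ci-dessus me paraît plus transparent.
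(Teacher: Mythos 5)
Your proof is correct, and it fills in precisely the verification the paper leaves implicit: the lemma is stated there with no argument beyond \emph{on vérifie aisément}, and your route---the explicit description of $\delta^{-1}\mathcal{F}$ as $(g:U\to X)\mapsto\Gamma(U,g^{*}\mathcal{F})$, of $f^{-1}$ as precomposition with $f$ (both requiring no sheafification, the latter because $\Yloc$ is the localization of $\Xloc$ at the object $f$), and then the functoriality isomorphism $(f\circ h)^{*}\cong h^{*}\circ f^{*}$---is the natural way to carry it out. One small point of care: a covering family in $\Xloc$ consists of jointly surjective \emph{isomorphismes locaux}, not open immersions, so the gluing condition is not literally the Zariski sheaf condition as you assert; one should first refine each member of the cover by opens on which it is an open immersion, and then invoke the Zariski sheaf property of $g^{*}\mathcal{F}$ on $U$ and on the members of the cover---this is immediate and does not affect the rest of the argument.
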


\subsubsection{}
Nous finissons la démonstration de la proposition \ref{nerdfidele}.
On définit un topos fibré simplicial $X=(X_n)_{n\in\NNN}$, 
tel que pour tout $n\in\NNN$,
$$
X_n=\Topd{\Ner_n(\CCd A(1),\CCd S(1))}_{\lloc}
$$
le $\Topd$ du topos loc
de $\Ner_n(\CCd A(1),\CCd S(1))$,
et les $\dd_i, \sss_j$ pour $X$ sont induits par ceux pour 
$\Ner(\CCd A(1),\CCd S(1))$.
On note $\OO\in X_0$
le faisceau structurel de 
$\CCd S(1)_{\lloc}$,
qui est un objet de $\BB X$
parce que
$\dd_0^{-1}\OO$ et $\dd_1^{-1}\OO$
sont canoniquement isomorphes
au faisceau structurel
de $(\CCd A(1)\times\CCd S(1))_{\lloc}$.
Il existe un isomorphisme canonique
$$
\Topd X =  \Topd \Ner(\CCd A(1),\CCd S(1))_{\lloc},
$$
tel que
$\nerd\OO$ est égal au faisceau structurel
de $\Ner(\CCd A(1),\CCd S(1))_{\lloc}$.
On note 
la catégorie des $\nerd\OO$-modules par
$\Mod(\Ner(\CCd A(1),\CCd S(1))_{\lloc})$.

\begin{prop} \label{nerdloc}
Soit $X$ le topos fibré simplicial défini ci-dessus.
Alors 
$$
\nerd: 
\Db(\BB X) \to \Db(\Ner(\CCd A(1),\CCd S(1))_{\lloc})
$$
est pleinement fidèle,
et son image essentielle se compose des objets $Y$ 
tels que pour tout $i$,
$\HH^i(Y)$ est dans l'image essentielle du foncteur 
$\nerd$ pour les modules.
\end{prop}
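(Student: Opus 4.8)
Le plan est de vérifier les hypothèses du théorème \ref{thmillusie} pour le topos fibré simplicial $X$ introduit ci-dessus, la conclusion s'en déduisant alors mot pour mot. La discussion qui précède l'énoncé fournit déjà deux des ingrédients requis: l'anneau $\OO$ appartient à $\BB X$ et $a_{\OO}$ y est un isomorphisme, et l'on dispose de l'identification canonique $\Topd X = \Topd \Ner(\CCd A(1),\CCd S(1))_{\lloc}$, sous laquelle $\nerd\OO$ devient le faisceau structurel. Il reste donc à établir que $X$ est une bonne pseudo-catégorie et que, pour tout $n\in\NNN$, le foncteur $(\dd_0\cdots\dd_0)^{-1}$ envoie les injectifs de $\Mod(X_0)$ sur des injectifs de $\Mod(X_n)$.

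L'ingrédient essentiel de la vérification repose sur l'observation de \S 4.6.2: tout morphisme de $\Ner(\CCd A(1),\CCd S(1))$ est induit par un morphisme de schémas de la forme $A^m\times S\to A^n\times S$, qui est un isomorphisme local, donc localement une immersion ouverte. Pour un tel morphisme $f$, l'image inverse $f^{-1}$ sur le topos loc admet un adjoint à gauche exact $f_!$ (le prolongement par zéro). En assemblant ces adjoints sommet par sommet le long des diagrammes $\Ner_n$ puis en passant aux $\Topd$, on obtient que chaque $\dd_i^{-1}$ et chaque $\sss_j^{-1}$ possède un adjoint à gauche, c'est-à-dire que $X$ est bon. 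De plus, comme $(\dd_0\cdots\dd_0)^{-1}$ est l'image inverse d'un isomorphisme local, c'est l'adjoint à droite du foncteur exact $f_!$ correspondant; il préserve donc les injectifs, ce qui fournit la dernière hypothèse.

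Pour la condition de pseudo-catégorie, on montrerait que le morphisme de changement de base associé au carré figurant dans l'énoncé du théorème \ref{thmillusie} est un isomorphisme. Le carré sous-jacent de diagrammes de schémas étant cartésien et toutes ses flèches étant des isomorphismes locaux, cette vérification se ramène localement au changement de base le long d'immersions ouvertes, où elle est immédiate. Les trois hypothèses étant acquises, le théorème \ref{thmillusie} donne la pleine fidélité de $\nerd$ ainsi que la description annoncée de son image essentielle.

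Le point le plus délicat sera la vérification soigneuse de ces compatibilités à l'intérieur de la structure itérée en jeu: chaque $X_n$ est lui-même le $\Topd$ du topos loc d'un diagramme, et il faudra contrôler que l'assemblage des adjoints $f_!$ et les isomorphismes de changement de base sont compatibles à la fois avec les morphismes de transition du nerf et avec ceux, internes, des diagrammes $\Ner_n$. C'est précisément pour disposer de tels adjoints $f_!$ et de l'égalité $f^{-1}=f^*$ sur les isomorphismes locaux que l'on a substitué le site loc au petit site de Zariski.
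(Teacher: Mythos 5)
Votre proposition est correcte et suit essentiellement la même démarche que l'article : on applique le théorème \ref{thmillusie} en vérifiant les trois conditions (bon topos fibré, pseudo-catégorie, préservation des injectifs) à partir du fait que toutes les flèches de $\Ner(\CCd A(1),\CCd S(1))$ sont des isomorphismes locaux, donc des morphismes de localisation pour le site loc, d'où les adjoints à gauche exacts et la préservation des injectifs par adjonction. La seule divergence est mineure et porte sur la condition de pseudo-catégorie : l'article observe directement que la première ligne du carré est une réunion disjointe de copies de la deuxième (les flèches verticales étant les projections), tandis que vous invoquez la cartésianité du carré et une réduction locale aux immersions ouvertes — deux formulations du même fait, la vôtre demandant seulement de préciser que la localisation se fait sur le but du foncteur image directe (et non sur sa source, ce qui serait illicite).
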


\begin{proof}
Pour appliquer le théorème \ref{thmillusie}
on doit vérifier
\begin{enumerate}[label=\alph*)]
\item X est un bon topos fibré;
\item X est une pseudo-catégorie;
\item $(\dd_0...\dd_0 )^{-1}$ envoie 
les injectifs dans les injectifs.
\end{enumerate}

Les conditions a) et c) sont essentiellement un corollaire du fait que,
les $\dd_i$ et $\sss_j$ dans le diagramme $\Ner(\CCd A(1),\CCd S(1))$
sont des isomorphismes locaux (\cf \S 4.6.2),
donc ils sont des morphismes de localisation pour le site loc
(donc admettent des adjoints à gauche exacts).
La condition b) se déduite facilement de la propriété suivante:
dans le diagramme
$$
\begin{CD}
\CCd A(1)^{n+1}\times \CCd S(1)
       @>\dd_0...\dd_0>>          
\CCd A(1)\times \CCd S(1)\\
@VV\dd_{n+1}V        
@VV\dd_{1}V\\
\CCd A(1)^{n}\times \CCd S(1)
   @>\dd_0...\dd_0>>      
\CCd S(1),
\end{CD}
$$
la première ligne est une réunion disjointe
de la deuxième ligne
avec les morphismes verticaux comme les projections.
\end{proof}

\begin{proof}[Preuve de la proposition \ref{nerdfidele}]
Soit $X$ le topos fibré simplicial 
défini dans la proposition \ref{nerdloc}.
L'équation \cite{I} VI 11.5.3.2 implique un isomorphisme
entre $\Db(\BB X)$ et $\Db(\ZZ[\CCd A(1)]\otimes \OO_{\Sloc})$.
On en déduit un diagramme commutatif
$$
\begin{CD}
\Db(\ZZ[\CCd A(1)]\otimes \OO_{S})
       @>\nerd>>          
\Db(\Ner(\CCd A(1),\CCd S(1)))\\
@VV\delta^{-1}V        
@VV\delta^{-1}V\\
\Db(\ZZ[\CCd A(1)]\otimes \OO_{\Sloc})
   @>\nerd>>      
\Db(\Ner(\CCd A(1),\CCd S(1))_{\lloc}),
\end{CD}
$$
où la flèche $\nerd$ en haut (\resp en bas)
est le foncteur dans la proposition \ref{nerdfidele} 
(\resp la proposition \ref{nerdloc}).
Le diagramme est commutatif par le lemme \ref{loczarf},
et les flèches verticales sont
des équivalences par le corollaire \ref{loczarmod}.
Donc la proposition \ref{nerdfidele}
se déduit de la proposition \ref{nerdloc}.
\end{proof}

\begin{rema} \label{remarqueillusie}
Il semble que  la réduction via le site loc 
de la proposition \ref{nerdfidele}
au théorème \ref{thmillusie}  est élémentaire;
néanmoins le théorème \ref{thmillusie} lui-même
est non-trivial
(qui dépend de la théorie des modules induits et coinduits
dans \cite{I} VI 8.3).
Comme la proposition \ref{nerdfidele}
est d'une nature combinatoire,
il existe probablement une preuve directe
(sans utiliser les sites),
mais elle ne doit pas être simple 
à cause de la théorie de  \cite{I} VI 8.3.

\end{rema}

\subsection{Le complexe $\AlGv$}

\subsubsection{}
Dans cette section nous définissons la variante $\AlGv$
sur le topos de Zariski du complexe $\AllGv$ d'Illusie.
Nous rappelons d'abord le complexe cotangent usuel.
Soit $X\to Y$ un morphisme de schéma.
Son \emph{complexe cotangent} $\LL_{X/Y}$
est le $\OX$-module simplicial 
$$
\LL_{X/Y}=\Omega_{\PPd(\OX)/\OY}\otimes_{\PPd(\OX)}\OX,
$$
où $\PPd(\OX)$ est la résolution standard (\cite{I} I 1.5)
de $\OX$ sur $\OY$.
L'objet de $\DD^{\leq 0}(\OX)$ qui correspond
à $\LL_{X/Y}$ 
sera noté  par la même notation.

On rappelle que la notion de complexe cotangent s'étend aux morphismes
de multi-diagrammes de schémas par \cite{I} VII 1.2.1.
On rappelle la définition pour les morphismes de $1$-diagrammes.
Soit $X$ un $1$-diagramme de schémas
de type $I$.
On note $\XZart$ le petit topos de Zariski
de $X$.
\footnote{$\XZart$ est un topos fibré au-dessus de $I$:
c'est la donnée de topos $\widetilde{X_{i,\Zar}}$
pour tout $i\in\ob I$, 
avec les morphismes de topos 
$\widetilde{X_{i,\Zar}}\to \widetilde{X_{j,\Zar}}$
induit par ceux entre les schémas $X_i\to X_j$
pour tout $i\to j$ dans $I$.}
On considère le topos total $\Top\XZar$ de $\XZart$ (\cite{I} VI 5.2.1):
les objets de $\Top\XZar$  sont les systèmes 
$(M_i, M_f)_{i\in\ob I, f\in\fl I}$,
où pour tout $i$,
$M_i$ est un faisceau sur $X_{i,\Zar}$,
et pour tout $f:i\to j$,
$M_f: f^{-1}M_j\to M_i$
est un morphisme dans $\widetilde{X_{i,\Zar}}$
qui vérifie la condition de cocycle.
Le faisceau structurel $\OO_{X}$ de $\XZart$
est un objet de $\Top\XZar$,
et on peut considérer les $\OO_{X}$-modules
dans $\Top\XZar$.

On suppose que $X\to Y$ est un morphisme de $1$-diagramme de schémas.
Par définition, 
son complexe cotangent $\LL_{X/Y}$ est le $\OO_{X}$-module simplicial,
tel que pour tout $i$,
la restriction de $\LL_{X/Y}$ sur $X_i$
est égale à $\LL_{X_i/Y_{\alpha(i)}}$,
où $Y_{\alpha(i)}$ est le sommet de $Y$ 
au-dessous de $X_i$;
le morphisme de transition 
$f^{-1}\LL_{X_j/Y_{\alpha(j)}}
\to \LL_{X_i/Y_{\alpha(i)}}$
est défini
par la fonctorialité 
du complexe cotangent
(\cite{I} II 1.2.7.2).

Maintenant on rappelle la définition du complexe de Lie.
On considère d'abord le cas d'un seul schéma en groupes,
puis les diagrammes de schémas en groupes.
Soit $S$ un schéma,
et $G$ un schéma en groupes, 
plat et de présentation finie sur $S$.
Le \emph{complexe de co-Lie} de $G$ est 
l'objet de $\DD^{\leq 0}(\OS)$
défini par
$$
\ell_G = \LL e^* \LL_{G/S},
$$
où $e$ est la section unité de $G$.
On appelle 
$$
\ell_G^{\vee}
= \RR \uHom_{\OS}(\ell_G, \OS)
$$
le \emph{complexe de Lie} de $G$.

On étend les définitions de $\ell_G$ et $\ell_G^{\vee}$ 
aux diagrammes.
Soit $S$ un $1$-diagramme de schémas de type $I$.
Soit $G$ un schéma en groupes plat
et de présentation finie sur $S$.
\footnote{Par définition,
$G$ est un $1$-diagramme de schémas du même type que $S$,
tel que pour tout $i\in\ob I$, $G_i$ est un schéma en groupes plat
et de présentation finie sur $S_i$, 
et pour tout $i\to j$, 
le morphisme $G_i\to G_j$ est un morphisme de schémas en groupes (sur $S_j$).}
On définit
$$
\ell_G = \LL e^* \LL_{G/S} \in \DD^{\leq 0}(\OS)
$$
$$
\ell_G^{\vee}
= \RR \uHom_{\OS}(\ell_G, \OO_{S})
\in \DD^{\geq 0}(S).
$$
On rappelle que $\DD^{\geq 0}(S)$ est la catégorie définie par la définition \ref{ModX}.

\subsubsection{}
On donne la définition de $\AlGv$.
On suppose que $A$ est un anneau commutatif,
$S$ est un schéma,
et $G$ est un schéma en $A$-modules,
plat et de présentation finie sur $S$.
D'après la définition \ref{translation}
on a un 2-diagramme $G(1)$ de $(\Sch/S)$.
\footnote{
Contrairement à la description de \S 4.5.1 pour $S(1)$,
les composantes $G(1)^r$ de $G(1)$ ne sont pas des schémas $r$-simpliciaux constants 
mais bien des translations.}
Par la fonctorialité on a une action de $\CCd A(1)$ sur $\CCd G(1)$
dans $\Simp_1(\Diag_1 \Sch/S)$.
On considère le nerf
$$
\Ner(\CCd A(1), \CCd G(1))
\in
\Simp_2(\Diag_1 \Sch/S),
$$
et
on voit facilement que $\Ner(\CCd A(1), \CCd G(1))$
est un schéma en groupes 
sur $\Ner(\CCd A(1), \CCd S(1))$.
Le complexe  $\AlGv$ est défini par la proposition suivante.

\begin{prop} \label{AlGv}
Supposons que $S$ est \emph{plat} sur $\Spec\ZZ$.
Alors il existe un objet
$$
\AlGv \in \DD^{[0,1]}(A\otimes\OS)
$$
unique à isomorphisme unique près,
tel que 
$$
\nerd(\CCd{\AlGv}(1))
\cong 
\ell^{\vee}_{\Ner(\CCd A(1), \CCd G(1))}
$$
dans $\Db(\Ner(\CCd A(1), \CCd S(1)))$.
(On rappelle que les foncteurs $\CCCC$ et $\nerd$
sont définis dans les propositions \ref{Cun} et \ref{nerdfidele}.)
\end{prop}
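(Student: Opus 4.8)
The plan is to reproduce, over the small Zariski site, the two-step descent by which \cite{I} constructs $\AllGv$ over the big fpqc site, now substituting the Zariski versions \ref{nerdfidele} and \ref{Cun} of his two key statements. Concretely, I would start from the Lie complex $\ell^{\vee}_{\Ner(\CCd A(1), \CCd G(1))}$ of the group scheme $\Ner(\CCd A(1), \CCd G(1))$ over $\Ner(\CCd A(1), \CCd S(1))$, descend it first through $\nerd$ to an object $L$ of $\Db(\ZZ[\CCd A(1)]\otimes\OS)$, and then descend $L$ through $\CCCC$ to the desired $\AlGv$ in $\DD^{[0,1]}(A\otimes\OS)$. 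The flatness of $S$ enters exactly to invoke Corollary \ref{DALS} (so that $A\otimes\OS$ represents $A\otL\OS$ and the target category is the right one) and to apply Proposition \ref{Cun}.

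For the first descent, Proposition \ref{nerdfidele} reduces the existence and uniqueness of $L$ to checking that each cohomology sheaf $\HH^i(\ell^{\vee}_{\Ner(\CCd A(1), \CCd G(1))})$ lies in the essential image of the module-level functor $\nerd$. This is where the compatibility of the Lie complex with base change is used: at each level $n$ the group scheme $\Ner_n(\CCd A(1), \CCd G(1))=\CCd A(1)^n\times\CCd G(1)$ is the pullback of $\CCd G(1)$ along the (flat) projection $\pp_n$, so one obtains a canonical identification $\ell^{\vee}_{\Ner_n}\cong\pp_n^*\ell^{\vee}_{\CCd G(1)}$, hence $\HH^i(\ell^{\vee}_{\Ner_n})\cong\pp_n^*\HH^i(\ell^{\vee}_{\CCd G(1)})$ by flatness of $\pp_n$. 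A direct computation of the transition morphisms then shows that those attached to the faces $\dd_i$ with $i\geq 1$ are the tautological ones, while the one attached to $\dd_0$ is precisely the descent datum coming from the $\CCd A(1)$-action on $\CCd G(1)$ (itself induced by the $A$-module structure on $G$). Thus $\HH^i(\ell^{\vee}_{\Ner(\CCd A(1), \CCd G(1))})=\nerd(\HH^i(\ell^{\vee}_{\CCd G(1)}))$, the sheaf $\HH^i(\ell^{\vee}_{\CCd G(1)})$ being regarded as a $\ZZ[\CCd A(1)]\otimes\OS$-module via that action, and Proposition \ref{nerdfidele} yields $L$ together with $\nerd(L)\cong\ell^{\vee}_{\Ner(\CCd A(1), \CCd G(1))}$.

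For the second descent, Proposition \ref{Cun} reduces the existence and uniqueness of $\AlGv$ to verifying that $L$, viewed in $\Db(\CCd\SSSS_1,\OS)$ after forgetting its $\ZZ[\CCd A(1)]$-module structure, lies in the essential image of $\CCCC:\DD^{[0,1]}(\OS)\to\Db(\CCd\SSSS_1,\OS)$. Forgetting the action, the underlying object of $L$ is the zero-level of $\nerd(L)$, namely the Lie complex $\ell^{\vee}_{\CCd G(1)}$ of the diagram $\CCd G(1)$; and the compatibility of cotangent (hence Lie) complex formation with the two building blocks of $\CCCC$, the translation $\text{--}(1)$ and the $\CCd$-construction, gives a canonical isomorphism $\ell^{\vee}_{\CCd G(1)}\cong\CCCC(\ell_G^{\vee})$, where $\ell_G^{\vee}\in\DD^{[0,1]}(\OS)$ is the ordinary Lie complex. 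Since this is manifestly in the essential image, Proposition \ref{Cun} produces the unique $\AlGv\in\DD^{[0,1]}(A\otimes\OS)$ with $\CCCC(\AlGv)\cong L$; combining the two descents gives $\nerd(\CCCC(\AlGv))\cong\ell^{\vee}_{\Ner(\CCd A(1), \CCd G(1))}$, and uniqueness up to unique isomorphism follows from the full faithfulness asserted in both \ref{nerdfidele} and \ref{Cun}. The main obstacle is the second essential-image verification: unwinding how the Lie complex of the diagram $\CCd G(1)$ factors through $\CCCC$ requires a careful level-by-level analysis of the $\CCd$-construction. This step, however, is formally identical to the one \cite{I} carries out for the big fpqc site; the genuinely new input needed to pass to the Zariski site has already been absorbed into Propositions \ref{nerdfidele} and \ref{Cun}.
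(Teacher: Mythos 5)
Your proposal is correct, and its skeleton is exactly the paper's: descend $\ell^{\vee}_{\Ner(\CCd A(1),\CCd G(1))}$ through $\nerd$ using Proposition \ref{nerdfidele}, then descend the resulting $L$ through $\CCCC$ using Proposition \ref{Cun}, with flatness of $S$ entering through Corollary \ref{DALS} and Proposition \ref{Cun}, and uniqueness coming from full faithfulness of both functors. Your second essential-image check is also the paper's: the image of $L$ in $\Db(\CCd\SSSS_1,\OS)$ is $\ell^{\vee}_{\CCd G(1)}$, and the isomorphism $\ell^{\vee}_{\CCd G(1)}\cong\CCd\ell_G^{\vee}(1)$ that you attribute to compatibility of the Lie complex with the building blocks of $\CCCC$ is precisely \cite{I} VII 4.1.3.1, which the paper cites rather than re-derives.

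The one place you genuinely diverge is the first essential-image check. The paper never computes transition morphisms by hand: it invokes the criterion \cite{I} VI 8.1.6, by which the $\HH^i$ of $\ell^{\vee}_{\Ner(\CCd A(1),\CCd G(1))}$ lie in the essential image of the module-level $\nerd$ if and only if the $\HH^i$ of $\Dec_1\ell^{\vee}_{\Ner(\CCd A(1),\CCd G(1))}=\ell^{\vee}_{\Dec_1\Ner(\CCd A(1),\CCd G(1))}$ are cartesian; since the décalage discards the face $\dd_0$ carrying the action, $\Dec_1\Ner(\CCd A(1),\CCd G(1))$ is cartesian over $\Dec_1\Ner(\CCd A(1),\CCd S(1))$, and base-change compatibility (\cite{I} VII 3.1.1.4) concludes. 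Your route — the levelwise identification $\ell^{\vee}_{\Ner_n}\cong\pp_n^{*}\ell^{\vee}_{\CCd G(1)}$ followed by reading off the transitions — rests on the same two inputs (cartesianness of the squares for the faces fixing the $\CCd G(1)$-factor, plus VII 3.1.1.4), and it is sound because the $\pp_n$ and these faces are local isomorphisms, hence flat. But note two points your sketch glosses over: the square for $\dd_0$ is \emph{not} cartesian (multiplication by a non-invertible $a\in A$ is not an isomorphism of $G$), so the $\dd_0$-transition is not a base-change isomorphism but only a morphism $a_M$; and one must then check that this $a_M$ satisfies the cocycle condition of \cite{I} VI 8.1.1 so that it really defines a $\ZZ[\CCd A(1)]\otimes\OS$-module. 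This does follow from the simplicial identities built into the transition data of $\ell^{\vee}_{\Ner(\CCd A(1),\CCd G(1))}$, but it is exactly the bookkeeping that the $\Dec_1$ criterion is designed to absorb. What your version buys in exchange is an explicit identification of $L$ as $\ell^{\vee}_{\CCd G(1)}$ with its natural action, which makes the input to the second descent immediate instead of being read off from the defining isomorphism of $L$.
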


\begin{proof}
Avec les propositions \ref{Cun} et \ref{nerdfidele},
c'est essentiellement la même démonstration que \cite{I} VII 4.1.4.1
pour $\AllGv$.
On montre d'abord que
$\ell^{\vee}_{\Ner(\CCd A(1), \CCd G(1))}$
est dans l'image essentielle de $\nerd$.
D'après la proposition \ref{nerdfidele},
c'est équivalent à montrer que les $\HH^i$ de 
$\ell^{\vee}_{\Ner(\CCd A(1), \CCd G(1))}$
sont dans l'image essentielle de $\nerd$ pour les modules.
Par \cite{I} VI 8.1.6, c'est équivalent au fait
que les $\HH^i$ de
$$
\Dec_1\ell^{\vee}_{\Ner(\CCd A(1), \CCd G(1))}
=
\ell^{\vee}_{\Dec_1\Ner(\CCd A(1), \CCd G(1))}
$$
sont cartésiens,
où $\Dec_1$ est le foncteur de décalé (\cite{I} VI 1.3).
Or, 
$\Dec_1\Ner(\CCd A(1), \CCd G(1))$
est cartésien au-dessus de $\Dec_1\Ner(\CCd A(1), \CCd S(1))$,
donc $\ell^{\vee}_{\Dec_1\Ner(\CCd A(1), \CCd G(1))}$
est quasi-cartésien par la compatibilité du complexe de Lie
avec le changement de base (\cite{I} VII 3.1.1.4).

On a vu qu'il existe
$
L \in \Db(\ZZ[\CCd A(1)]\otimes\OS)
$
tel que
$$
\nerd(L)\cong \ell^{\vee}_{\Ner(\CCd A(1), \CCd G(1))}.
$$
On montre que
$L$ est dans l'image essentielle de $\CCCC$.
D'après la proposition \ref{Cun},
il suffit de montrer la même propriété
pour l'image de $L$ dans $\Db(\CCd \SSSS_1,\OS)$.
Or, la définition de $L$ implique que 
son image dans $\Db(\CCd\SSSS_1,\OS)$
est isomorphe à $\ell^{\vee}_{\CCd G(1)}$.
D'après \cite{I} VII 4.1.3.1, on a 
$$
\ell^{\vee}_{\CCd G(1)} 
\cong 
\CCd\ell_G^{\vee}(1),
$$
donc l'hypothèse de la proposition \ref{Cun} est vérifiée.
\end{proof}

\begin{prop} \label{Glisse}
Soit $G$ un schéma en $A$-modules sur $S$ comme précédemment.
Si $G$ est lisse sur $S$, alors il existe un isomorphisme canonique
$$
\AlGv \cong \omega_G^{\vee}
$$
dans $\Db(A\otimes\OS)$,
où la structure de $A$-module de $\omega_G^{\vee}$  
est induite par celle de $G$.
\end{prop}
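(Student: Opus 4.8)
The plan is to use the characterization of $\AlGv$ furnished by la proposition~\ref{AlGv}. Cette proposition produit $\AlGv$ comme l'\emph{unique} objet de $\DD^{[0,1]}(A\otimes\OS)$ (unique à isomorphisme unique près, grâce à la pleine fidélité de $\CCCC$ (proposition~\ref{Cun}) et de $\nerd$ (proposition~\ref{nerdfidele})) dont l'image vérifie $\nerd(\CCd\AlGv(1)) \cong \ell^\vee_{\Ner(\CCd A(1), \CCd G(1))}$. Il suffit donc de munir le candidat naïf $\omega_G^\vee$ de la structure de $A$-module induite par l'action de $A$ sur $G$, et de vérifier qu'il satisfait la même propriété caractéristique; la clause d'unicité fournira alors un isomorphisme canonique $\AlGv \cong \omega_G^\vee$ dans $\Db(A\otimes\OS)$. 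Toute la démonstration se réduit ainsi à construire un isomorphisme canonique
$$
\nerd(\CCd\omega_G^\vee(1)) \cong \ell^\vee_{\Ner(\CCd A(1), \CCd G(1))}
$$
dans $\Db(\Ner(\CCd A(1), \CCd S(1)))$. On remarque d'abord que, comme $G$ est lisse sur $S$, le complexe cotangent $\LL_{G/S}$ est égal à $\Omega_{G/S}$ placé en degré $0$, donc $\ell_G = \omega_G$ et $\ell_G^\vee = \omega_G^\vee$ sont localement libres et concentrés en degré $0$; en particulier $\omega_G^\vee$ appartient bien à $\DD^{[0,1]}(A\otimes\OS)$.

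On établit ensuite l'isomorphisme étage par étage. En degré $n$, le membre de gauche est $\pp_n^*\CCd\omega_G^\vee(1)$ par la définition~\ref{nerd}, tandis que le membre de droite est le complexe de Lie de $\Ner_n(\CCd A(1), \CCd G(1)) = \CCd A(1)^n\times\CCd G(1)$ relativement à $\Ner_n(\CCd A(1), \CCd S(1)) = \CCd A(1)^n\times\CCd S(1)$. Comme $\CCd A(1)^n\times\CCd G(1)$ est le changement de base de $\CCd G(1)\to\CCd S(1)$ le long de la projection $\pp_n$, la compatibilité du complexe de Lie avec le changement de base (\cite{I} VII 3.1.1.4, déjà utilisée dans la démonstration de la proposition~\ref{AlGv}) l'identifie à $\pp_n^*\ell^\vee_{\CCd G(1)}$; et $\ell^\vee_{\CCd G(1)}\cong\CCd\ell_G^\vee(1)=\CCd\omega_G^\vee(1)$ par \cite{I} VII 4.1.3.1 dans le cas lisse. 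Les deux membres sont donc canoniquement $\pp_n^*\CCd\omega_G^\vee(1)$.

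Il reste à faire correspondre les morphismes de transition. Pour les faces $\dd_i$ avec $i\geq 1$ et toutes les dégénérescences $\sss_j$, les deux systèmes portent l'identité: à gauche par la définition~\ref{nerd}, à droite parce que les faces correspondantes de $\Ner(\CCd A(1), \CCd G(1))$ ne font que multiplier ou oublier des coordonnées dans la direction $\CCd A(1)$ et laissent intacte la coordonnée $\CCd G(1)$, de sorte que par fonctorialité du complexe de Lie elles induisent les transitions canoniques de l'identification par changement de base ci-dessus. Le seul point substantiel est la face d'action $\dd_0$: à gauche la transition est $(\dd_2\cdots\dd_{n+1})^* a_{\omega_G^\vee}$, où $a_{\omega_G^\vee}$ est le morphisme de structure $\dd_1^*\CCd\omega_G^\vee(1)\to\dd_0^*\CCd\omega_G^\vee(1)$ encodant la structure de $\ZZ[\CCd A(1)]$-module de $\CCd\omega_G^\vee(1)$; à droite c'est le morphisme induit sur les complexes de Lie par le morphisme d'action $\dd_0\colon \CCd A(1)\times\CCd G(1)\to\CCd G(1)$.

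On s'attend à ce que cette dernière identification soit le principal obstacle. En déroulant les constructions de \S 4.4 et \S 4.5, la face $\dd_0$ est construite à partir de la multiplication par les éléments de $A$ sur $G$, et par fonctorialité de $\ell^\vee$ le morphisme induit est exactement l'action de $A$ sur l'algèbre de Lie $\omega_G^\vee$; c'est précisément le morphisme de structure $a_{\omega_G^\vee}$ fourni par $\CCCC$, puisque $\CCCC$ est construit de sorte que la structure d'anneau de $A$ induise la structure de module (\cf \S 4.4.4 et l'action définie dans \S 4.5.1). Concrètement il s'agit de l'analogue sur le site de Zariski de la compatibilité établie dans \cite{I} VII 4.1.4, et la difficulté est de nature purement technique: il faut suivre fidèlement la $A$-équivariance à travers la translation $\text{--}(1)$, le foncteur $\CCd\text{--}$ et le nerf. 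Une fois cette compatibilité vérifiée, les deux systèmes de morphismes de transition coïncident, les isomorphismes étage par étage se recollent en un isomorphisme dans $\Db(\Ner(\CCd A(1), \CCd S(1)))$, et la conclusion résulte de l'unicité dans la proposition~\ref{AlGv}.
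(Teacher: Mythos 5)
Votre démonstration est correcte et suit essentiellement la même stratégie que celle de l'article : la lissité de $G$ donne $\ell^{\vee}_{\Ner(\CCd A(1), \CCd G(1))} \cong \omega^{\vee}_{\Ner(\CCd A(1), \CCd G(1))}$, et l'on conclut par la caractérisation de $\AlGv$ et la pleine fidélité des propositions \ref{Cun} et \ref{nerdfidele}. La seule différence est de degré de détail : l'article laisse implicite l'identification $\nerd(\CCd\omega_G^{\vee}(1)) \cong \omega^{\vee}_{\Ner(\CCd A(1), \CCd G(1))}$ (étage par étage et sur les morphismes de transition, notamment la face d'action $\dd_0$), que vous explicitez soigneusement.
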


\begin{proof}
La lissité de $G$ implique 
$
\ell^{\vee}_{\Ner(\CCd A(1), \CCd G(1))}
\cong
\omega^{\vee}_{\Ner(\CCd A(1), \CCd G(1))},
$
donc la proposition se déduit de la pleine fidélité 
dans les propositions \ref{Cun} et \ref{nerdfidele}
et la définition de $\AlGv$.
\end{proof}

\begin{prop} \label{triangle}
Soit $A$ et $S$ comme précédemment, et
$$
0\to G\to B\to C\to 0
$$
une suite exacte de schémas en $A$-modules,
plats et de présentation finie sur $S$, 
telle que $B$ et $C$ sont lisses sur $S$.
Alors il existe un isomorphisme canonique
$$
\AlGv \cong [\omega_B^{\vee}\to \omega_C^{\vee}]
$$
dans $\Db(A\otimes\OS)$,
où le complexe à droite est placé en degrés $0$ et $1$.
\end{prop}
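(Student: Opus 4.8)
Le plan est d'obtenir un triangle distingué $A$-équivariant à partir de la suite exacte, puis d'en identifier deux termes via la proposition \ref{Glisse}. Je commence par le cas non équivariant, sur un seul schéma en groupes. La suite exacte $0\to G\to B\to C\to 0$ réalise $G$ comme le produit fibré $B\tim{C} S$ au-dessus de la section neutre $e_C:S\to C$, et la projection $\pi:B\to C$ est fidèlement plate (c'est un $G$-torseur fppf). Le changement de base du complexe cotangent (\cite{I} VII 3.1.1.4, déjà utilisé dans la preuve de la proposition \ref{AlGv}) donne $\LL e_B^*\LL_{B/C}\cong \ell_G$, où $e_B$ est la section neutre de $B$. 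En appliquant $\LL e_B^*$ au triangle de transitivité du complexe cotangent pour $B\xrightarrow{\pi}C\to S$, on obtient un triangle distingué de complexes de co-Lie $$\ell_C\to\ell_B\to\ell_G\xrightarrow{+1}$$ dans $\Db(\OS)$, puis, en dualisant par $\RR\uHom(-,\OS)$, un triangle de complexes de Lie $$\ell_G^{\vee}\to\ell_B^{\vee}\to\ell_C^{\vee}\xrightarrow{+1}.$$

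Ensuite je rends cet argument $A$-équivariant au niveau du nerf. Par exactitude des foncteurs $\text{--}(1)$ et $\CCd\text{--}$, la suite exacte induit une suite exacte de schémas en groupes $$0\to\Ner(\CCd A(1),\CCd G(1))\to\Ner(\CCd A(1),\CCd B(1))\to\Ner(\CCd A(1),\CCd C(1))\to 0$$ au-dessus de $\Ner(\CCd A(1),\CCd S(1))$, chaque étage du nerf étant un produit de $\CCd A(1)^n$ avec la suite exacte de départ. Le même raisonnement de transitivité et de changement de base pour les complexes cotangents de diagrammes (\cite{I} VII 1.2.1, VII 3.1.1.4) fournit alors un triangle distingué $$\ell^{\vee}_{\Ner(\CCd A(1),\CCd G(1))}\to\ell^{\vee}_{\Ner(\CCd A(1),\CCd B(1))}\to\ell^{\vee}_{\Ner(\CCd A(1),\CCd C(1))}\xrightarrow{+1}.$$

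Je descends enfin ce triangle en bas via la pleine fidélité. Le foncteur $\nerd\circ\CCCC$ est exact et pleinement fidèle (propositions \ref{Cun} et \ref{nerdfidele}), et par la proposition \ref{AlGv} les trois termes ci-dessus sont respectivement les images de $\AlGv$, $\AlBv$, $\AlCv$. La flèche $\AlBv\to\AlCv$ se relève donc de manière unique en un morphisme $g$ dans $\Db(A\otimes\OS)$, et l'image par $\nerd\circ\CCCC$ du cône de $g$ s'identifie au cône du morphisme du haut, qui n'est autre que l'image de $\AlGv[1]$ (par rotation du triangle du nerf); la pleine fidélité donne alors $\text{c\^one}(g)\cong\AlGv[1]$, c'est-à-dire que $\AlGv$ est la fibre de $g$. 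Comme $B$ et $C$ sont lisses, la proposition \ref{Glisse} identifie $\AlBv\cong\omega_B^{\vee}$ et $\AlCv\cong\omega_C^{\vee}$, d'où $$\AlGv\cong[\omega_B^{\vee}\to\omega_C^{\vee}]$$ placé en degrés $0$ et $1$, ce qui est l'énoncé.

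La principale difficulté sera l'étape de descente: il faut justifier que la structure triangulée se transporte par le foncteur exact pleinement fidèle $\nerd\circ\CCCC$, en s'assurant que les trois sommets du triangle du nerf appartiennent bien à son image essentielle --- ce qui résulte des propositions \ref{AlGv} et \ref{Glisse} --- et que le cône calculé en bas s'envoie sur le cône en haut. Il faudra également vérifier avec soin l'isomorphisme de changement de base $\LL e_B^*\LL_{B/C}\cong\ell_G$ et sa variante pour les diagrammes, la platitude de $\pi$ (et de ses analogues sur le nerf) en étant l'ingrédient clé.
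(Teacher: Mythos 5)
Votre démonstration est correcte et suit essentiellement la même voie que celle de l'article : triangle distingué des complexes de Lie au niveau du nerf $\Ner(\CCd A(1), \CCd \text{--}(1))$, identification des deux termes lisses avec $\omega_B^{\vee}$ et $\omega_C^{\vee}$ (proposition \ref{Glisse}), puis descente par la pleine fidélité de $\CCCC$ et $\nerd$. La seule différence est que l'article cite directement le triangle fondamental du complexe de Lie (\cite{I} VII 3.1.1.5) là où vous le redémontrez par transitivité et changement de base du complexe cotangent, et qu'il formule la descente comme le transport d'un isomorphisme entre objets concentrés en degrés $0,1$ plutôt que par votre argument de cône (qui revient au même, via la pleine fidélité décalée en degrés $[-1,0]$).
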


\begin{proof}
Le triangle fondamental du complexe de Lie
(\cite{I} VII 3.1.1.5) implique que
$$
\ell^{\vee}_{\Ner(\CCd A(1), \CCd G(1))}
\to \ell^{\vee}_{\Ner(\CCd A(1), \CCd B(1))}
\to \ell^{\vee}_{\Ner(\CCd A(1), \CCd C(1))}
\nrightarrow
$$
est distingué,
d'où un isomorphisme
$$
\ell^{\vee}_{\Ner(\CCd A(1), \CCd G(1))}
\cong [\omega^{\vee}_{\Ner(\CCd A(1), \CCd B(1))}
\to \omega^{\vee}_{\Ner(\CCd A(1), \CCd C(1))}]
$$
dans $\Db(\Ner(\CCd A(1), \CCd S(1)))$.
Comme $\AlGv$ et $[\omega_B^{\vee}\to \omega_C^{\vee}]$
sont de degré cohomologique $0,1$,
on peut utiliser la pleine fidélité de $\CCCC$ et $\nerd$
pour conclure.
\end{proof}

\begin{prop} \label{Res}
Soit $G/S$ un schéma en $A$-modules comme précédemment,
et $A'\to A$ un morphisme d'anneau. 
On note
$$
\Res: \Db(A\otimes\OS) \to \Db(A'\otimes\OS)
$$
le foncteur de restriction des scalaires induit par $A'\to A$.
Alors il existe un isomorphisme canonique
$$
\Res(\AlGv) {\cong} \AplGv
$$
dans $\Db(A'\otimes\OS)$.
\end{prop}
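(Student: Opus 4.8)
Le plan est d'utiliser la caractérisation de $\AlGv$ et de $\AplGv$ par la proposition~\ref{AlGv}, et de montrer que le foncteur $\Res$ correspond, via les foncteurs $\CCCC$ et $\nerd$, à une image inverse le long d'un morphisme de nerfs induit par $A'\to A$. D'abord, je remarquerais que le morphisme d'anneau $A'\to A$ induit un morphisme de monoïdes $\CCd A'(1)\to \CCd A(1)$ dans $\Simp_1(\Diag_1\Modz)$, compatible avec les actions sur $\CCd G(1)$ et $\CCd S(1)$, car la structure de $A'$-module de $G$ est la restriction de sa structure de $A$-module. On en déduit un morphisme de schémas en groupes multi-simpliciaux
$$
\Phi: \Ner(\CCd A'(1), \CCd G(1)) \to \Ner(\CCd A(1), \CCd G(1))
$$
au-dessus d'un morphisme $\psi: \Ner(\CCd A'(1), \CCd S(1)) \to \Ner(\CCd A(1), \CCd S(1))$.

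Ensuite, je vérifierais que le carré formé par $\Phi$, $\psi$ et les morphismes structurels est cartésien étage par étage: au $n$-ième étage il s'identifie au carré
$$
\begin{CD}
\CCd A'(1)^n\times\CCd G(1) @>>> \CCd A(1)^n\times\CCd G(1)\\
@VVV @VVV\\
\CCd A'(1)^n\times\CCd S(1) @>>> \CCd A(1)^n\times\CCd S(1)
\end{CD}
$$
dont la cartésianité est claire puisque la fibre $\CCd G(1)$ est inchangée. La compatibilité du complexe de Lie avec le changement de base (\cite{I} VII 3.1.1.4), déjà employée dans la preuve de la proposition~\ref{AlGv}, fournirait alors un isomorphisme canonique
$$
\psi^* \ell^{\vee}_{\Ner(\CCd A(1), \CCd G(1))}
\cong
\ell^{\vee}_{\Ner(\CCd A'(1), \CCd G(1))}.
$$

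Le point principal serait d'établir la compatibilité de $\Res$ avec $\nerd$ et $\CCCC$, à savoir un isomorphisme canonique, fonctoriel en $V\in\Db(A\otimes\OS)$, entre $\nerd(\CCd{\Res V}(1))$ et $\psi^*\nerd(\CCd V(1))$, où le membre de gauche (\resp de droite) utilise le $\nerd$ associé à $A'$ (\resp à $A$). Pour cela j'observerais que $\CCCC(\Res V)$ coïncide avec $\CCd V(1)$ muni de sa structure de $\ZZ[\CCd A'(1)]$-module obtenue par restriction le long de $\ZZ[\CCd A'(1)]\to\ZZ[\CCd A(1)]$, et que dans la définition~\ref{nerd} la projection $\pp_n$ relative à $A'$ se factorise à travers celle relative à $A$ via $\psi$. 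Au niveau des étages on aurait donc $\nerd_n(\Res V)=\psi_n^*\nerd_n(V)$, les morphismes de transition se correspondant car le cocycle de l'action de $A'$ est l'image inverse par $\psi$ de celui de l'action de $A$.

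En combinant ces isomorphismes, on obtiendrait
$$
\nerd(\CCd{\Res(\AlGv)}(1))
\cong
\psi^* \ell^{\vee}_{\Ner(\CCd A(1), \CCd G(1))}
\cong
\ell^{\vee}_{\Ner(\CCd A'(1), \CCd G(1))}
$$
dans $\Db(\Ner(\CCd A'(1), \CCd S(1)))$. Comme $\Res$ préserve les degrés cohomologiques, $\Res(\AlGv)$ appartiendrait à $\DD^{[0,1]}(A'\otimes\OS)$, et l'unicité dans la proposition~\ref{AlGv} appliquée à $A'$ donnerait l'isomorphisme canonique $\Res(\AlGv)\cong\AplGv$ voulu. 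L'obstacle principal sera la vérification soigneuse de cette dernière compatibilité de $\nerd$ et $\CCCC$ avec le changement d'anneau, le reste découlant formellement des propositions~\ref{Cun}, \ref{nerdfidele} et~\ref{AlGv}.
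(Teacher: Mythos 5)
Votre démonstration est correcte et constitue essentiellement l'argument que l'article invoque~: la preuve de l'article tient en une ligne, citant \cite{I} VII 4.1.5 a) (le même énoncé pour $\AllGv$ sur le grand site fpqc), le point étant que l'argument d'Illusie se transfère tel quel une fois les foncteurs $\CCCC$, $\nerd$ et la compatibilité au changement de base rétablis sur le site de Zariski. Votre reconstruction --- morphisme de nerfs induit par $A'\to A$, carré cartésien étage par étage, compatibilité \cite{I} VII 3.1.1.4, puis pleine fidélité et unicité de la proposition \ref{AlGv} --- est précisément cet argument transféré, et c'est d'ailleurs le même gabarit que l'article emploie explicitement dans sa preuve de la proposition \ref{compatibilite}.
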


\begin{proof}
C'est la même propriété que \cite{I} VII 4.1.5 a)
pour $\AllGv$.
\end{proof}

\subsubsection{}
La proposition suivante ne sera utilisée que dans \S 5.4.

\begin{prop} \label{GGprime}
Soit $S$ un schéma plat sur $\Spec\ZZ$,
$A,A'$ deux anneaux,
et $G$ (\resp $G'$) un schéma en $A$-modules
(\resp schéma en $A'$-modules),
plat et de présentation finie sur $S$.
Munissons le schéma en groupes $G\times_S G'$
de la structure de $A\oplus A'$-module naturelle.
Alors il existe un isomorphisme canonique
$$
{_{A\oplus A'}\ell_{G\times G'}^{\vee}}
\cong {_{A}\ell_{G}^{\vee}} \oplus {_{A'}\ell_{G'}^{\vee}}
$$
dans $\Db((A\oplus A')\otimes\OS)$.
\end{prop}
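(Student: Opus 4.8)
Le plan est de se ramener d'abord à une propriété d'additivité sur un \emph{seul} anneau, puis d'appliquer la méthode des propositions \ref{Glisse} et \ref{triangle}. Je commencerais par préciser le sens du membre de droite. Comme $\otimes=\otimes_{\ZZ}$ commute aux sommes directes, on a une décomposition d'anneaux $(A\oplus A')\otimes\OS=(A\otimes\OS)\oplus(A'\otimes\OS)$, d'où une équivalence $\Db((A\oplus A')\otimes\OS)\cong\Db(A\otimes\OS)\times\Db(A'\otimes\OS)$ donnée par les idempotents $(1,0)$ et $(0,1)$. Sous cette équivalence, le plongement du facteur $\Db(A\otimes\OS)$ coïncide avec la restriction des scalaires $\Res_p$ le long de la projection $p\colon A\oplus A'\to A$, et de même avec $\Res_{p'}$ pour $p'\colon A\oplus A'\to A'$; le membre de droite est donc, par définition, $\Res_p({_A\ell_G^{\vee}})\oplus\Res_{p'}({_{A'}\ell_{G'}^{\vee}})$. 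J'appliquerais alors la proposition \ref{Res} aux morphismes $p$ et $p'$ pour réécrire ces deux termes comme ${_{A\oplus A'}\ell_G^{\vee}}$ et ${_{A\oplus A'}\ell_{G'}^{\vee}}$, où $G$ (\resp $G'$) est vu comme schéma en $(A\oplus A')$-modules via $p$ (\resp $p'$).

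Ensuite j'observerais qu'un calcul direct identifie le schéma $G\times_S G'$, muni de sa structure naturelle de $(A\oplus A')$-module, au produit de $G$ (via $p$) et de $G'$ (via $p'$) dans la catégorie des schémas en $(A\oplus A')$-modules: dans les deux cas $(a,a')$ agit par $(g,g')\mapsto(ag,a'g')$. En posant $R=A\oplus A'$ et en notant $K$, $K'$ ces deux $R$-modules, l'énoncé se ramènerait ainsi à l'additivité du complexe de Lie $R$-équivariant, à savoir
$$
{_R\ell^{\vee}_{K\times_S K'}}\cong {_R\ell^{\vee}_{K}}\oplus {_R\ell^{\vee}_{K'}}
$$
dans $\Db(R\otimes\OS)$.

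Pour cette dernière, je suivrais la stratégie des propositions \ref{Glisse} et \ref{triangle}: par pleine fidélité de $\CCCC$ sur $\DD^{[0,1]}$ (proposition \ref{Cun}) et de $\nerd$ (proposition \ref{nerdfidele}), il suffit de produire l'isomorphisme correspondant entre les complexes de co-Lie des nerfs. Les foncteurs $(-)(1)$ et $\CCd$ étant additifs, $\CCd(K\times_S K')(1)$ s'identifie au produit (qui, pour les schémas en groupes, est aussi une somme) de $\CCd K(1)$ et $\CCd K'(1)$; comme la formation du nerf est un produit fibré en chaque étage, $\Ner(\CCd R(1),\CCd(K\times_S K')(1))$ serait le produit fibré, au-dessus de $\Ner(\CCd R(1),\CCd S(1))$, de $\Ner(\CCd R(1),\CCd K(1))$ et $\Ner(\CCd R(1),\CCd K'(1))$. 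L'additivité du complexe de co-Lie pour un produit fibré de schémas en groupes (conséquence de la propriété analogue du complexe cotangent d'un produit) donnerait alors un isomorphisme canonique entre $\ell^{\vee}_{\Ner(\CCd R(1),\CCd(K\times K')(1))}$ et la somme $\ell^{\vee}_{\Ner(\CCd R(1),\CCd K(1))}\oplus\ell^{\vee}_{\Ner(\CCd R(1),\CCd K'(1))}$. Par définition de ${_R\ell^{\vee}_{K}}$, ${_R\ell^{\vee}_{K'}}$ et par exactitude de $\nerd$ et $\CCCC$, ce dernier objet est l'image par $\nerd\circ\CCCC$ de ${_R\ell^{\vee}_{K}}\oplus{_R\ell^{\vee}_{K'}}$, et je conclurais par pleine fidélité, les deux membres étant de degré cohomologique $[0,1]$.

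L'obstacle principal me semble être de vérifier soigneusement que l'isomorphisme d'additivité sur le nerf est compatible avec toute la structure (morphismes de transition du topos total, structure simpliciale et action de $\CCd R(1)$), de sorte qu'il définisse bien un isomorphisme dans $\Db(\Ner(\CCd R(1),\CCd S(1)))$ et non une simple collection d'isomorphismes étage par étage; c'est là que la fonctorialité de la formation du nerf et du complexe de co-Lie doit être invoquée avec soin. Le reste, notamment la commutation de $\CCCC$ et $\nerd$ aux sommes directes et le contrôle des degrés $[0,1]$, est de nature formelle.
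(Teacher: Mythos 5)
Your proof is correct, but it takes a genuinely different route from the paper's. The paper proves a Künneth-type lemma (lemme \ref{XtimesY}) for the cotangent complex of a product over a product of \emph{different} bases, $(\LL_{X/S}\otimes\OY)\oplus(\OX\otimes\LL_{Y/T})\xrightarrow{\sim}\LL_{X\times Y/S\times T}$, deduced from \cite{I} II 2.3.11 and noted to remain valid for diagrams of schemas; it then applies this lemma to the single identification of $\Ner(\CCd(A\oplus A')(1),\CCd(G\times G')(1))$ with the fibre product of $\Ner(\CCd A(1),\CCd G(1))$ and $\Ner(\CCd A'(1),\CCd G'(1))$ over the constant diagram of value $S$ (and the analogous identification for the base nerves), and concludes from the definitions of the three equivariant Lie complexes. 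The paper thus splits \emph{both} the ring variable and the group variable inside the nerve, so that each factor carries its $A$- (\resp $A'$-) structure by construction, and restriction of scalars never appears explicitly. You instead keep the ring $R=A\oplus A'$ whole: you split only the group variable, as a fibre product of nerves over the \emph{common} base $\Ner(\CCd R(1),\CCd S(1))$, which calls for the same-base Künneth formula rather than the two-base one, and you then recover the statement as written via the idempotent decomposition of $\Db((A\oplus A')\otimes\OS)$ together with the proposition \ref{Res}. Both routes rest on a Künneth formula for cotangent complexes of diagrams plus descent through the fully faithful functors $\CCCC$ and $\nerd$ in degrees $[0,1]$. Yours is more modular — it isolates a reusable one-ring additivity statement ${}_{R}\ell^{\vee}_{K\times_S K'}\cong{}_{R}\ell^{\vee}_{K}\oplus{}_{R}\ell^{\vee}_{K'}$ and reuses \ref{Res} — while the paper's is more compact (one lemma, one application). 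Finally, the obstacle you flag (compatibility of the levelwise additivity isomorphism with the simplicial and transition structure) is resolved in the paper exactly as you anticipate it should be: the Künneth isomorphism is functorial, hence is an isomorphism of complexes on the nerve \emph{diagram} itself and not merely a collection of isomorphisms étage par étage; this is precisely why lemme \ref{XtimesY} is formulated so as to hold for diagrams of schemas.
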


On montre d'abord le lemme suivant.
On remarque que ce lemme reste valable dans un contexte 
plus général des diagrammes de schémas.

\begin{lemm} \label{XtimesY}
Soit $B$ le schéma de base.
On note $\times=\times_B, \otimes=\otimes_{\OO_B}$.
Soient $S, T$ deux schémas plats sur $B$,
et $X\to S, Y\to T$ deux morphismes plats
de schémas sur $B$.
Alors il existe un isomorphisme canonique
$$
(\LL_{X/S}\otimes \OY) \oplus 
(\OX \otimes \LL_{Y/T})
\xrightarrow{\sim} 
\LL_{X\times Y/S\times T}
$$
dans $\DD^{\leq 0}(\OO_{X\times Y})$.
\end{lemm}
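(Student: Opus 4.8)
The plan is to produce the isomorphism from the two obvious functoriality maps of the cotangent complex and then to check that it is an isomorphism by means of a single transitivity triangle, whose splitting is forced by a compatibility between these functoriality maps. Write $p_X\colon X\times Y\to X$ and $p_Y\colon X\times Y\to Y$ for the two projections. The hypotheses (that $S,T$ are flat over $B$ and that $X\to S$, $Y\to T$ are flat) imply that $X,Y,S,T$ are all flat over $B$; consequently every fibre product below is Tor-independent, the base-change theorem for the cotangent complex applies to the relevant cartesian squares, and the external tensor product $\otimes=\otimes_{\OO_B}$ agrees with its derived version. One may, if desired, reduce to the affine case to invoke these facts in their simplest form, but the whole argument is formal and takes place in $\Db(\OO_{X\times Y})$.

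First I would record the base-change identifications. The square with top arrow $X\times Y\to S\times Y$ and bottom arrow $X\to S$ is cartesian, and $S\times Y\to S$ is flat (being a base change of $Y\to B$); hence $\LL_{X\times Y/S\times Y}\cong p_X^*\LL_{X/S}$, which under the flatness hypotheses is exactly $\LL_{X/S}\otimes\OY$. Symmetrically, using that $S\times T\to T$ is flat, $\LL_{S\times Y/S\times T}$ is the pullback of $\LL_{Y/T}$, so that along $X\times Y\to S\times Y$ it becomes $p_Y^*\LL_{Y/T}=\OX\otimes\LL_{Y/T}$. Next, the commutative square with top arrow $p_X$ and bottom arrow the projection $S\times T\to S$ gives, by functoriality of the cotangent complex (\cite{I} II 1.2.7.2), a map $\gamma\colon p_X^*\LL_{X/S}\to\LL_{X\times Y/S\times T}$, and likewise one obtains $\alpha\colon p_Y^*\LL_{Y/T}\to\LL_{X\times Y/S\times T}$. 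Summing yields the candidate morphism $\Phi=(\gamma,\alpha)$, and it remains to prove that $\Phi$ is an isomorphism.

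For this I would use the transitivity triangle attached to $X\times Y\to S\times Y\to S\times T$, which after the identifications above reads
$$
p_Y^*\LL_{Y/T}\xrightarrow{\ \alpha\ }\LL_{X\times Y/S\times T}\xrightarrow{\ \beta\ }p_X^*\LL_{X/S}\nrightarrow,
$$
its first map being (up to the canonical base-change isomorphism) the functoriality map $\alpha$ above. Here $\beta\circ\alpha=0$ automatically. The crux is the identity $\beta\circ\gamma=\id$. Indeed, $\beta$ is the transitivity map $t\colon\LL_{X\times Y/S\times T}\to\LL_{X\times Y/S\times Y}$ followed by the inverse of the base-change isomorphism $\gamma'\colon p_X^*\LL_{X/S}\xrightarrow{\sim}\LL_{X\times Y/S\times Y}$; and by the compatibility of the functoriality maps with composition of commutative squares (\cite{I}), one has $t\circ\gamma=\gamma'$, since both are the functoriality map attached to the pasted square with top arrow $p_X$ and bottom arrow the projection $S\times Y\to S$. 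Hence $\beta\circ\gamma=(\gamma')^{-1}\circ t\circ\gamma=\id$, so $\gamma$ is a section of $\beta$, the triangle splits, and $(\alpha,\gamma)=\Phi$ is the resulting isomorphism onto $\LL_{X\times Y/S\times T}$.

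The main obstacle is precisely the identity $\beta\circ\gamma=\id$: it is the only non-formal point, and it rests on tracking the exact shape of the base-change isomorphisms together with the compatibility of Illusie's cotangent-complex functoriality maps with the pasting of squares. Everything else — the two base-change identifications, the construction of $\gamma$ and $\alpha$, and the vanishing $\beta\circ\alpha=0$ — is automatic. I would close by noting, as the remark after the statement suggests, that the argument uses nothing specific to ordinary schemes and carries over verbatim to diagrams of schemes, the flatness serving only to guarantee Tor-independence and to identify $\otimes$ with the derived tensor product.
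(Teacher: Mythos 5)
Your proof is correct, but it takes a genuinely different route from the paper's. The paper disposes of the lemma in one line: it invokes the second isomorphism of \cite{I} II 2.3.11 --- the K\"unneth-type decomposition of the cotangent complex of a Tor-independent product over the common base --- applied to $X\times Y$ and to $S\times T$ over $B$, and then passes to the relative complex $\LL_{X\times Y/S\times T}$ via the fundamental (transitivity) triangle for $X\times Y\to S\times T\to B$, matching the summands against the triangles for $X\to S\to B$ and $Y\to T\to B$. You never descend to the base $B$: you factor through the intermediate product, $X\times Y\to S\times Y\to S\times T$, identify the two outer terms of the resulting transitivity triangle by flat base change (legitimate, since $S\times Y\to S$ and $S\times T\to T$ are flat, being base changes of the flat morphisms $Y\to B$ and $S\to B$), and then split the triangle by exhibiting a section $\gamma$ of $\beta$, the key identity $\beta\circ\gamma=\id$ coming from the compatibility of the functoriality maps of $\LL$ with the pasting of commutative squares. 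In effect you reprove the relative K\"unneth formula from scratch instead of citing it: this costs the diagram-tracking around $\beta\circ\gamma=\id$ (which you correctly isolate as the only non-formal point, and which you verify correctly), but it buys a self-contained argument that makes the canonical morphism and the splitting explicit, using only base change, the transitivity triangle, and pseudo-functoriality. Both arguments use the flatness hypotheses for the same purpose (Tor-independence), and both, as you note at the end, carry over verbatim to diagrams of schemas, which is exactly what the paper needs when it applies the lemma to the nerve diagrams in the proof of the proposition \ref{GGprime}.
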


\begin{proof}
Via le triangle fondamental du complexe cotangent,
c'est un corollaire de \cite{I} II 2.3.11,
le deuxième isomorphisme
(appliqué à $X\times Y$ et $S\times T$,
respectivement).
\end{proof}

\begin{proof}[Preuve de la proposition \ref{GGprime}]
Il suffit  d'appliquer le lemme \ref{XtimesY} aux produits fibrés
$$
\Ner(\CCd A(1),\CCd  G(1))
\tim{S_{\Ner(\CCd \SSSS_1, \CCd\SSSS_1)}}
\Ner(\CCd A'(1),\CCd G'(1))
=
\Ner(\CCd (A\oplus A')(1),\CCd (G\oplus G')(1))
$$
$$
\Ner(\CCd A(1),\CCd  S(1))
\tim{S_{\Ner(\CCd \SSSS_1, \CCd\SSSS_1)}}
\Ner(\CCd A'(1),\CCd S(1))
=
\Ner(\CCd (A\oplus A')(1),\CCd S(1))
$$
où $S_{\Ner(\CCd\SSSS_1, \CCd\SSSS_1)}$ 
est le diagramme 
de type $\Ner(\CCd\SSSS_1, \CCd\SSSS_1)$
de valeur constante $S$,
puis utiliser les définitions de 
${_{A}\ell_{G}^{\vee}}, {_{A'}\ell_{G'}^{\vee}}$
et ${_{A\oplus A'}\ell_{G\times G'}^{\vee}}$.
\end{proof}

\subsection{Le complexe $\lMKv$}
\subsubsection{}
Dans cette section on donne la  définition du complexe de Lie
monoïde-équivariant $\lMKv$ sur le topos de Zariski.
La définition de $\lMKv$ est similaire à celle de $\AlGv$ mais plus simple
(sans utiliser le foncteur $\CCCC$).
On suppose que $M$ est un monoïde commutatif,
$S$ est un schéma,
et $K/S$ est un schéma en groupes, plat et de présentation finie,
qui est muni d'une $M$-action.
On les associe les objets simpliciaux 
$\Ner(M,K), \Ner(M,S)$
de $(\Sch/S)$,
où $M$ agit trivialement sur $S$.
La  définition \ref{nerd} nous fournit
un foncteur
$$
\nerd: \Db(\ZM\otimes\OS) \to \Db(\Ner(M,S)),
$$
et la proposition \ref{nerdfidele} reste vraie dans ce contexte.
Par le même argument
que la proposition \ref{AlGv},
on obtient

\begin{prop} \label{lMKv}
Soit $K$ un schéma en groupes sur $S$ 
muni d'une $M$-action comme précédemment.
Alors il existe un objet $\lMKv$ de $\Db(\ZM\otimes\OS)$, 
unique à isomorphisme unique près,
tel que
$$
\nerd(\lMKv)\cong \ell^{\vee}_{\Ner(M,K)}
$$
dans $\Db(\Ner(M,S))$.
\end{prop}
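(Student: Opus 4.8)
The plan is to reproduce verbatim the argument of the proof of Proposition \ref{AlGv}, while omitting the descent along $\CCCC$: since $M$ carries only a monoidal and not an additive structure, the functor $\nerd$ already lands in $\Db(\ZM\otimes\OS)$, and no intermediate category of type $\CCd\SSSS_1$ nor the functor $\CCCC$ intervenes. Consequently the sole task is to verify that $\ell^{\vee}_{\Ner(M,K)}$ lies in the essential image of
$$
\nerd\colon \Db(\ZM\otimes\OS)\to\Db(\Ner(M,S)).
$$
Once this is done, Proposition \ref{nerdfidele}, which holds verbatim in the present monoidal context, produces the object $\lMKv$, and its full faithfulness yields uniqueness up to unique isomorphism for free; no further work is needed on that point.

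To establish membership in the essential image, I would appeal to the characterization of that image in Proposition \ref{nerdfidele}: it suffices to check that each cohomology object $\HH^i(\ell^{\vee}_{\Ner(M,K)})$ lies in the essential image of $\nerd$ at the level of modules. By \cite{I} VI 8.1.6 this is equivalent to requiring that the $\HH^i$ of the décalé $\Dec_1\ell^{\vee}_{\Ner(M,K)}$ be cartesian, where one uses the identity
$$
\Dec_1\ell^{\vee}_{\Ner(M,K)}=\ell^{\vee}_{\Dec_1\Ner(M,K)}
$$
expressing the compatibility of the Lie complex with the décalage functor.

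The remaining and principal point is then to show that $\ell^{\vee}_{\Dec_1\Ner(M,K)}$ is quasi-cartesian. For this I would observe that $\Dec_1\Ner(M,K)$ is cartesian over $\Dec_1\Ner(M,S)$—a direct consequence of the fact that each face map of the nerve is assembled from the $M$-action and the projections, so that after décalage the structural squares become cartesian—and then invoke the base-change compatibility of the Lie complex, \cite{I} VII 3.1.1.4, to conclude that $\ell^{\vee}_{\Dec_1\Ner(M,K)}$ is quasi-cartesian and hence has cartesian cohomology objects. This closes the reduction and, with the full faithfulness already noted, completes the proof.

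I expect the main obstacle to be precisely this last cartesianness verification. Although it is formally parallel to the ring-theoretic case of Proposition \ref{AlGv}, one must confirm that the décalé simplicial diagram $\Dec_1\Ner(M,K)$ sits cartesianly over $\Dec_1\Ner(M,S)$ for the ordinary (rather than bisimplicial) nerve, and that the hypotheses of \cite{I} VII 3.1.1.4 genuinely apply under the sole assumption that $K$ is flat and of finite presentation over $S$; these are the only places where a careful check, rather than a formal transcription of the earlier proof, is required.
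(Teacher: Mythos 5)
Your proposal is correct and is essentially the paper's own proof: the paper defines $\lMKv$ by noting that Proposition \ref{nerdfidele} holds verbatim in the monoidal setting and then invoking ``le même argument que la proposition \ref{AlGv}'' with the $\CCCC$-descent step omitted, which is exactly the reduction you spell out (essential image criterion via $\HH^i$, \cite{I} VI 8.1.6, cartesianness of $\Dec_1\Ner(M,K)$ over $\Dec_1\Ner(M,S)$, and base change \cite{I} VII 3.1.1.4). The cautionary points you raise are checked under the same flatness and finite-presentation hypotheses already used for $\AlGv$, so no additional argument is needed.
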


\subsubsection{}
Lorsque $K$ s'injecte dans un schéma lisse 
par une immersion fermée régulière $M$-équivariante,
il existe une formule explicite pour $\lMKv$,
similaire à son analogue bien connu pour le complexe cotangent usuel
(\cite{I} III 3.2.6).
On omet la démonstration car c'est le même argument que
\cite{I} VII 2.2.5
pour le complexe cotangent équivariant.
On rappelle la notion d'une immersion régulière 
(dans le cas noethérien, et voir \cite{I} III 3.2.2 pour le cas général):
soit $i:Y\to X$ une immersion fermée de schémas
d'idéal $\III\subset \OX$,
alors $i$ est \emph{régulière} 
si localement sur $X$,
$\III$ est engendré par une suite régulière de $\OX$.
Pour tout complexe de $\OS$-modules $N$,
on note $N^{\vee}$ le complexe défini par
$N^{\vee}_i=\uHom(N_{-i}, \OS)$ pour tout $i\in\ZZ$.

\begin{prop} \label{intersectioncomplete}
Soit $K/S$ un schéma en groupes muni d'une $M$-action
comme précédemment.
Soit $X/S$ un schéma lisse muni d'une $M$-action,
et $i: K\to X$ une immersion fermée régulière $M$-équivariante sur $S$.
On note $\III \subset\OO_X$ l'idéal de $i$,
et $e$ la section unité de $K$.
Alors il existe un isomorphisme canonique
$$
\lMKv \cong [e^*(\III/\III^2) \to e^*i^*\Omega_{X/S}]^{\vee}
$$
dans $\Db(\ZM\otimes\OS)$,
où le complexe de droite est placé en degrés $0,1$
et muni de la  $M$-action naturelle 
(induite par les $M$-structures de $K$ et $X$).
\end{prop}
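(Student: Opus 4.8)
Le plan est de reproduire l'argument de \cite{I} VII 2.2.5, en remplaçant le complexe cotangent équivariant par le complexe de Lie et en travaillant sur le petit topos de Zariski. D'après la proposition \ref{lMKv}, $\lMKv$ est l'unique objet de $\Db(\ZM\otimes\OS)$ vérifiant $\nerd(\lMKv)\cong \ell^{\vee}_{\Ner(M,K)}$. Il suffit donc de munir le complexe $[e^*(\III/\III^2) \to e^*i^*\Omega_{X/S}]^{\vee}$ de sa $M$-action naturelle et d'établir un isomorphisme
$$
\nerd\big([e^*(\III/\III^2) \to e^*i^*\Omega_{X/S}]^{\vee}\big)
\cong
\ell^{\vee}_{\Ner(M,K)}
$$
dans $\Db(\Ner(M,S))$; la pleine fidélité de $\nerd$ (proposition \ref{nerdfidele}) et l'unicité ci-dessus garantiront alors l'isomorphisme voulu.

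Je commencerais par rappeler la formule non-équivariante. Comme $i: K\to X$ est une immersion régulière et $X/S$ est lisse, la théorie du complexe cotangent (\cite{I} III 3.2.6) fournit un isomorphisme $\LL_{K/S}\cong [\III/\III^2 \to i^*\Omega_{X/S}]$ placé en degrés $-1,0$. En appliquant $\LL e^*$ puis la dualité, on en déduit
$$
\ell_K^{\vee} \cong [e^*(\III/\III^2) \to e^*i^*\Omega_{X/S}]^{\vee}
$$
dans $\Db(\OS)$, le complexe de droite étant placé en degrés $0,1$.

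J'appliquerais ensuite cette formule à chaque étage du nerf. Pour tout $n$, l'immersion $M^n\times K\to M^n\times X$ induite par $i$ est le changement de base de $i$ le long de la projection $\Ner_n(M,S)=M^n\times S\to S$, donc reste une immersion régulière dans un schéma lisse sur $\Ner_n(M,S)$. Le faisceau conormal et les différentielles relatives étant compatibles avec ce changement de base, la formule précédente donne
$$
\ell^{\vee}_{\Ner_n(M,K)}
= \pp_n^*\big([e^*(\III/\III^2) \to e^*i^*\Omega_{X/S}]^{\vee}\big)
= \nerd_n\big([e^*(\III/\III^2) \to e^*i^*\Omega_{X/S}]^{\vee}\big),
$$
ce qui identifie les deux objets étage par étage.

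L'obstacle principal sera de vérifier que les morphismes de transition coïncident. Du côté gauche, ils proviennent de la fonctorialité du complexe de Lie appliquée aux faces $\dd_i$ et dégénérescences $\sss_j$ du nerf; du côté droit, ils sont donnés par la définition \ref{nerd} à partir de la $M$-action $a_M$ du complexe explicite. Le point est que la face $\dd_0$ est induite par l'action de $M$ sur $K$, tandis que les $\dd_i$ pour $i\geq 1$ et les $\sss_j$ induisent l'identité sur le facteur $K$; la $M$-équivariance de l'immersion $i$ assure alors que la $M$-structure naturelle du complexe explicite induit précisément le morphisme de transition attaché à $\dd_0$. Cette vérification, purement fonctorielle, est exactement le contenu de \cite{I} VII 2.2.5 et ne présente pas de difficulté nouvelle dans le cadre Zariskien; la conclusion suit par pleine fidélité de $\nerd$.
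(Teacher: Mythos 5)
Votre démonstration est correcte et suit essentiellement la même voie que l'article : celui-ci omet d'ailleurs la preuve en renvoyant simplement à \cite{I} VII 2.2.5, et votre argument — la formule non-équivariante de \cite{I} III 3.2.6 appliquée étage par étage au nerf (chaque étage étant le changement de base de $i$ par un isomorphisme local, donc encore une immersion régulière dans un schéma lisse), l'identification des morphismes de transition via la $M$-équivariance de $i$, puis la conclusion par pleine fidélité de $\nerd$ (propositions \ref{lMKv} et \ref{nerdfidele}) — est précisément la reconstruction de cet argument dans le cadre Zariskien. C'est exactement la démonstration que l'auteur avait en tête en l'omettant.
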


\subsection{La compatibilité des complexes de Lie}

Soit $A$ un anneau commutatif,
$S$ un schéma plat sur $\Spec\ZZ$, 
et $G$ un schéma en $A$-modules, plat et de présentation finie sur $S$.
On note $\Am$ le monoïde multiplicatif sous-jacent à $A$,
donc la $A$-structure sur $G$ induit une action de $\Am$ sur $G$.
Dans cette section
on montre une compatibilité entre les complexes 
$\AlGv$ et $\ell^{\Am,\vee}_G$
(voir les propositions \ref{AlGv} et \ref{lMKv} pour les définitions).
On note
$$
\Com: \Db(A\otimes \OS) \to \Db(\ZZ[\Am]\otimes\OS).
$$
le foncteur défini par le morphisme d'anneau naturel
$\ZZ[\Am]\to A$.
On a la proposition suivante,
qui est très simple mais ne figure pas dans \cite{I}.

\begin{prop} \label{compatibilite}
Soit $G$ un schéma en $A$-modules sur $S$
comme précédemment.
Alors il existe un isomorphisme canonique
$$
\Com(\AlGv)\cong \ell^{\Am,\vee}_G
$$
dans $\Db(\ZZ[\Am]\otimes\OS)$.
\end{prop}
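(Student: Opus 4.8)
The plan is to apply the fully faithful functor $\nerd$ of the monoid setting (proposition \ref{nerdfidele}, which is valid also for $M=\Am$) to both sides and to exhibit a canonical isomorphism of their images. Since $\nerd(\ell^{\Am,\vee}_G)\cong \ell^\vee_{\Ner(\Am,G)}$ holds by the very definition of $\ell^{\Am,\vee}_G$ (proposition \ref{lMKv}), it suffices to construct a canonical isomorphism
$$
\nerd(\Com(\AlGv))\cong \ell^\vee_{\Ner(\Am,G)}
$$
in $\Db(\Ner(\Am,S))$; full faithfulness (which reflects isomorphisms) then yields the desired isomorphism in $\Db(\ZZ[\Am]\otimes\OS)$.

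The bridge between the two constructions is a monoid homomorphism $\kappa\colon \Am\to\CCd A(1)$. Inspecting definition \ref{translation} one sees that the $r=0$ stratum $A(1)^0$ is $A$, and that the monoid multiplication of $\CCd A(1)$ restricts, on the component indexed by the constant chain at $[-1]\in\ddp$, to the multiplication of $A$, with unit the image of $1$; this component is precisely $\Am$, and the inclusion of this vertex is a map $\kappa$ of monoids in $\Simp_1(\Diag_1\Modz)$. The same vertex inclusion identifies $G=G(1)^0$ inside $\CCd G(1)$ compatibly with the actions, so that $\kappa$ induces a morphism of nerves $\Ner(\kappa)\colon \Ner(\Am,G)\to\Ner(\CCd A(1),\CCd G(1))$ over $\Ner(\Am,S)\to\Ner(\CCd A(1),\CCd S(1))$. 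Two compatibilities follow by unwinding the definitions: restricting $\CCd V(1)$ to this vertex returns $V$ equipped with its $\ZZ[\Am]$-structure through $\ZZ[\Am]\to A$, i.e. $\Com(V)$; and the functor $\nerd$ (definition \ref{nerd}), being built from the pullbacks $\pp_n^*$, commutes with this restriction, so that $\Ner(\kappa)^*\nerd(\CCd V(1))\cong\nerd(\Com(V))$.

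Combining these,
$$
\nerd(\Com(\AlGv))\cong \Ner(\kappa)^*\,\nerd(\CCd\AlGv(1))\cong \Ner(\kappa)^*\,\ell^\vee_{\Ner(\CCd A(1),\CCd G(1))},
$$
the second isomorphism being the defining property of $\AlGv$ (proposition \ref{AlGv}). The group scheme $\Ner(\CCd A(1),\CCd G(1))$ pulls back along $\Ner(\kappa)$ to $\Ner(\Am,G)$, since the bottom vertex of $\CCd G(1)$ is $G$; hence the compatibility of the Lie complex with base change (\cite{I} VII 3.1.1.4) gives $\Ner(\kappa)^*\ell^\vee_{\Ner(\CCd A(1),\CCd G(1))}\cong\ell^\vee_{\Ner(\Am,G)}$, as required. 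The main obstacle is the bookkeeping behind the bridge of the previous paragraph: identifying the bottom vertex of Illusie's stabilization $\CCd A(1)$ with the multiplicative monoid $\Am$, and checking that the descent through $\CCCC$ defining $\AlGv$ is compatible with restriction to this vertex (one may cross-check the latter against the isomorphism $\ell^\vee_{\CCd G(1)}\cong\CCd\ell_G^\vee(1)$ of \cite{I} VII 4.1.3.1). This step is combinatorial rather than deep, but the $\CCd\SSSS_1$-type indexing of $\CCCC$ makes it delicate.
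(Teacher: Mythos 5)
Your proposal is correct and takes essentially the same route as the paper: the paper's proof likewise uses the inclusion of the bottom vertex (its $i_S\colon S\to\CCd S(1)$ and $i_G\colon G\to\CCd G(1)$, equivariant for the monoid map $\Am\to\CCd A(1)$ that you call $\kappa$), the commutativity of the induced diagram relating $\Com$, $\CCCC$ and $\nerd$, the full faithfulness of $\nerd$, and finally the base-change compatibility of the Lie complex (\cite{I} VII 3.1.1.4) applied to the cartesian square $\Ner(\Am,G)\to\Ner(\CCd A(1),\CCd G(1))$ over $\Ner(\Am,S)\to\Ner(\CCd A(1),\CCd S(1))$. There is no gap; the two arguments coincide up to notation.
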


\begin{proof}

Cette proposition résulte du fait que,
quand on restreint l'action de $\CCd A(1)$ sur $\CCd G(1)$
(dans la définition de $\AlGv$)
aux sommets de $0$-ièmes étages,
on retrouve l'action de $\Am$ sur $G$.
Plus précisément, on note
$\CCd S(1)$\ l'objet de $\Simp_1(\Diag_1 \Sch/S)$
défini dans \S 4.5.1.
On a
$$
\CC_0 S(1)= \mathop\bigsqcup\limits_
{r\in\NNN}
S(1)^{r},
$$
où $S(1)^{r}$ est l'objet constant de $\Simp_r(\Sch/S)$
de valeur $S$.
En prenant $r=0$,
on obtient un morphisme
$$
i_S: S\to \CCd S(1)
$$
dans la catégorie $\Diag_2(\Sch/S)$.
Le morphisme $i_S$ est équivariant 
par rapport à l'action triviale de $\Am$ sur $S$
et l'action de $\CCd A(1)$ sur $\CCd S(1)$.
Donc il induit un morphisme 
$$
i_S: \Ner(\Am, S) \to \Ner(\CCd A(1), \CCd S(1))
$$
dans $\Simp_1(\Diag_2 \Sch/S)$.

On considère le diagramme commutatif 
$$
\xymatrix{
\Db(A\otimes\OS)
\ar[r]^{\CCCC\qquad}
\ar[rd]^{\Com} 
&\Db(\ZZ[\CCd A(1)]\otimes\OS)
\ar[r]^{\nerd\quad}
\ar[d]^{i_S^*} 
&\Db(\Ner(\CCd A(1), \CCd S(1))) 
\ar[d]_{i_S^*}\\
&\Db(\ZZ[\Am]\otimes\OS)
\ar[r]^{\nerd\quad}
&\Db(\Ner(\Am, S))
}.
$$
Par définition de $\AlGv$ et $\ell^{\Am,\vee}_G$, on a 
$$
\nerd(\CCd {\AlGv}(1))
\cong 
\ell^{\vee}_{\Ner(\CCd A(1), \CCd G(1))}
$$
$$
\nerd(\ell^{\Am,\vee}_G)\cong 
\ell^{\vee}_{\Ner(\Am, G)}.
$$
Comme $\nerd$ est pleinement fidèle,
il suffit de montrer
$$
i_S^*(\ell^{\vee}_{\Ner(\CCd A(1), \CCd G(1))})
\cong \ell^{\vee}_{\Ner(\Am, G)}.
$$
Mais c'est essentiellement la compatibilité du complexe de Lie 
avec le changement de base
(\cite{I} VII 3.1.1.4):
on l'applique au diagramme cartésien 
$$
\begin{CD}
\Ner(\Am, G) @>i_G >> \Ner(\CCd A(1), \CCd G(1))\\
@VVV @VVV \\
\Ner(\Am, S) @>i_S >> \Ner(\CCd A(1), \CCd S(1))
\end{CD}
$$
où $i_G$ est défini de façon similaire de $i_S$
(qui envoie $G$ au premier sommet de $\CCd G(1)$).
\end{proof}


\section{Modèle local de niveau $\Gpp$}

\subsection{Le déterminant d'un complexe parfait}

\subsubsection{}

Nous rappelons la théorie du déterminant de Knudsen-Mumford \cite{KM}
qu'on utilisera plus loin dans la construction du modèle local.
Le déterminant est un foncteur 
qui transforme un isomorphisme (dans la catégorie dérivée)
entre complexes parfaits 
en un isomorphisme de fibrés en droites.
Plus précisément,
soit $S$ un schéma,
et $F$  un complexe de $\OS$-modules.
On dit que $F$ est un complexe \emph{parfait} (\cite{S6} I 4.7)
si localement sur $S$,
$F$ est quasi-isomorphe à un complexe borné de $\OS$-modules
libres de type fini.
On note 
$
\Parf(S) 
$
la sous-catégorie pleine de $\DD(\OS)$ 
qui consiste en tous les complexes parfaits,
et $\Parfis(S)$ la sous-catégorie de $\Parf(S)$
avec les mêmes objets mais seulement les isomorphismes.
On note $\Pic(S)$ la catégorie des fibrés en droites sur $S$,
et $\Picis(S)$ la sous-catégorie  de $\Pic(S)$
avec les mêmes objets mais seulement les isomorphismes.

Un \emph{déterminant} est la donnée $(\det, i)$,
où
$$
\det: \Parfis(S) \to \Picis(S)
$$
est un foncteur,
et pour tout suite exacte de complexes parfaits
$$
0\to G \xrightarrow{\alpha}F \xrightarrow{\beta} H\to 0,
$$
on l'associe un isomorphisme de fibrés en droites
(avec $\otimes = \otimes_{\OS}$)
$$
i_{\alpha, \beta}: \det G \otimes \det H \xrightarrow{\sim} \det F;
$$
la donnée $(\det, i)$ doit vérifier les axiomes i) à v)
de \cite{KM} definition 1.
Ici on ne répète pas les axiomes mais juste fait
quelques remarques:
l'axiome i) concerne la fonctorialité de $i_{\alpha, \beta}$
par rapport à la suite exacte;
l'axiome v) implique que,
si $F, G, H$ sont des $\OS$-modules localement libres de type fini,
alors $\det F=\bigwedge^{\max}F$
(et de même pour $G, H$),
et si l'on suppose que $F, G, H$ sont libres,
alors
$i_{\alpha, \beta}$ est défini par 
$$
(x_1\wedge...\wedge x_r)\otimes (y_1\wedge...\wedge y_s)
\mapsto x_1\wedge...\wedge x_r\wedge y_1\wedge...\wedge y_s,
$$
où $x_i$ (\resp $y_j$) est une base de $G$ (\resp $H$).
La donnée $(\det, i)$ est essentiellement déterminé par ces axiomes:
d'après  \cite{KM} theorem 1,
elle existe et est unique à isomorphisme unique près.

\subsubsection{}

Dans cet article, 
on ne considère que le déterminant des complexes parfaits d'un type particulier:

\begin{hyp} \label{etoile}
Soit $F$ un complexe de $\OS$-modules.
On dit que $F$ est de type $(\star)$ si
$F=[F_1 \xrightarrow{d_F} F_0]$
est concentré en degrés $-1,0$,
tel que $F_1, F_0$ sont des $\OS$-modules localement libres de type fini,
et $\rg F_1=\rg F_0$ sur chaque composante connexe de $S$.
\end{hyp}

On rappelle que,
si $F=[F_1 \xrightarrow{d_F} F_0]$ 
est de type $(\star)$,
alors la définition de \cite{KM} page 31 implique que
$$
\det F = (\det F_1)^{-1} \otimes \det F_0,
$$
avec la notion d'inverse $(\det F_1)^{-1}$ 
bien choisie pour éviter le problème des signes
(\cf \cite{KM} page 20, l'isomorphisme $\delta$).
La section de $\det F$ induite par $d_F$
est appelée la section \emph{canonique}.
Si $G=[G_1 \xrightarrow{d_G} G_0]$ est un autre complexe de type $(\star)$
(qui n'est pas nécessairement de même rang que $F$),
et
$
\lambda: F \to G
$
est un isomorphisme dans $\DD(\OS)$,
alors par la fonctorialité on obtient un isomorphisme de fibrés en droites
$\det(\lambda): \det F \xrightarrow{\sim} \det G$.
Nous renvoyons le lecteur à \cite{KM} page 26
pour la définition de $\det(\lambda)$
via le cylindre de $\lambda$.

On obtient les deux lemmes suivants. 
On omet leurs démonstrations car elles sont implicites dans \cite{KM}
(au moins dans le cas où $F$ et $G$ sont acycliques,
mais ces démonstrations sont vraies en toute généralité).
Pour le lemme \ref{fonctorialite},
voir la fonctorialité de $\det$ dans \cite{KM} page 30;
pour le lemme \ref{suiteexacte} 
voir le diagramme commutatif de \cite{KM} page 33.

\begin{lemm} \label{fonctorialite}
Soient $F, G$ deux complexes de $\OS$-modules  de  type $(\star)$,
et $\lambda: F\to G$ un isomorphisme dans $\DD(\OS)$.
Alors le diagramme
$$
\xymatrix{
\OS
\ar[r]^{d_F}
\ar[rd]_{d_G}
&\det F
\ar[d]^{\det(\lambda)}
_{\rotatebox{270}{$\sim$}}\\
&\det G
}
$$
est commutatif.
\end{lemm}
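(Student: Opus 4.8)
Le plan est de se ramener à une situation locale et libre, puis au cas d'un isomorphisme de complexes, où le calcul est immédiat. La commutativité du diagramme revenant à l'égalité de deux morphismes $\OS\to\det G$, on la vérifierait localement sur $S$: on peut donc supposer $S=\Spec R$ affine et connexe, et $F_0,F_1,G_0,G_1$ libres, de rangs $r=\rg F_1=\rg F_0$ et $s=\rg G_1=\rg G_0$. Sous l'identification $\det F=\uHom_{\OS}(\det F_1,\det F_0)$, la section canonique $d_F$ n'est autre que $\bigwedge\nolimits^r d_F$, et de même pour $d_G$. Comme $F$ est un complexe borné de $R$-modules projectifs, on a $\Hom_{\DD(\OS)}(F,G)=\Hom_{K(\OS)}(F,G)$; on représenterait alors $\lambda$ par un morphisme de complexes $(\lambda_0,\lambda_1)$, nécessairement un quasi-isomorphisme (donc une équivalence d'homotopie), vérifiant $\lambda_0 d_F=d_G\lambda_1$.

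On traiterait ensuite le cas clé où $\lambda$ est un isomorphisme de complexes (en particulier $r=s$). Alors $\det(\lambda)$ est l'application $\phi\mapsto\det(\lambda_0)\circ\phi\circ(\det\lambda_1)^{-1}$ sur $\uHom_{\OS}(\det F_1,\det F_0)$, et l'on calcule
$$
\det(\lambda)(d_F)=\det(\lambda_0)\circ\bigwedge\nolimits^r d_F\circ(\det\lambda_1)^{-1}
=\bigwedge\nolimits^r(d_G\lambda_1)\circ(\det\lambda_1)^{-1}
=\bigwedge\nolimits^r d_G=d_G,
$$
en utilisant la relation $\lambda_0 d_F=d_G\lambda_1$.

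Pour le cas général, on se ramènerait au cas clé grâce au fait standard que deux complexes de $R$-modules projectifs concentrés en degrés $-1,0$ et homotopiquement équivalents deviennent isomorphes après ajout de complexes contractiles élémentaires: il existerait des $R$-modules projectifs $\Pi,\Pi'$ et un isomorphisme de complexes
$$
\Phi\colon F\oplus[\Pi\xrightarrow{\id}\Pi]\xrightarrow{\ \sim\ } G\oplus[\Pi'\xrightarrow{\id}\Pi']
$$
réalisant $\lambda$, au sens où $b\circ\Phi\circ a=\lambda$ dans $\DD(\OS)$, avec $a$ l'inclusion scindée de $F$ et $b$ la projection scindée sur $G$ (deux quasi-isomorphismes). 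En posant $E=[\Pi\xrightarrow{\id}\Pi]$ et $E'=[\Pi'\xrightarrow{\id}\Pi']$, la différentielle $\id_\Pi$ de $E$ a pour section canonique l'unité et $\det E\cong\OS$ canoniquement; comme la différentielle de $F\oplus E$ est $d_F\oplus\id_\Pi$, la multiplicativité de $\det$ le long de la suite exacte scindée $0\to F\to F\oplus E\to E\to0$ (\cite{KM}, axiomes i) et v)) donnerait $\det(a)(d_F)=d_{F\oplus E}$, et de même $\det(b)(d_{G\oplus E'})=d_G$. Le cas clé appliqué à $\Phi$ fournirait $\det(\Phi)(d_{F\oplus E})=d_{G\oplus E'}$, d'où, par fonctorialité de $\det$ sur $\Parfis(S)$,
$$
\det(\lambda)(d_F)=\det(b)\circ\det(\Phi)\circ\det(a)(d_F)=d_G.
$$

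La principale difficulté me paraît d'ordre conventionnel plutôt que conceptuel: il faut suivre scrupuleusement les conventions de signe de \cite{KM} --- en particulier l'isomorphisme $\delta$ de la page~20 qui intervient dans la définition de $(\det F_1)^{-1}$ --- pour s'assurer que la section canonique d'un complexe de type $(\star)$ se comporte correctement sous $\det$ des isomorphismes de complexes et sous la multiplicativité le long des suites exactes scindées. Ces compatibilités étant implicites dans \cite{KM} (\cf la fonctorialité de $\det$ page~30 et le diagramme de la page~33), l'argument ci-dessus deviendrait purement formel, le seul ingrédient non trivial restant étant le théorème de structure des équivalences d'homotopie invoqué au troisième paragraphe.
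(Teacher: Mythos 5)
Le texte ne démontre pas ce lemme : sa démonstration est omise, avec un simple renvoi à la fonctorialité de $\det$ dans \cite{KM}, page 30 (en notant que les énoncés y sont écrits pour des complexes acycliques, mais que les démonstrations valent en général). Votre proposition suit donc une route réellement différente : vous reconstruisez un argument complet --- localisation, représentation de $\lambda$ par un morphisme de complexes, stabilisation par des complexes contractiles élémentaires pour remplacer $\lambda$ par un isomorphisme de complexes $\Phi$, puis calcul direct pour $\Phi$. Le calcul du cas clé est correct (aucune permutation de facteurs n'intervient, donc aucun signe de Koszul), et le fait de stabilisation est bien standard ; notez toutefois qu'il demande une vérification supplémentaire pour rester en degrés $-1,0$ : le cylindre de $\lambda$ fournit $F\oplus\mathrm{cone}(\lambda)\cong\mathrm{Cyl}(\lambda)\cong G\oplus\mathrm{cone}(\id_F)$ avec des cônes en degrés $-2,-1,0$ ; il faut ensuite scinder ces cônes (contractiles, à composantes projectives) et éliminer les facteurs élémentaires en degrés $-2,-1$, ce qui est possible car tout isomorphisme de complexes les échange (ils sont engendrés par la composante de degré $-2$ et son image par la différentielle) ; on retombe alors exactement sur votre énoncé avec $E,E'$ en degrés $-1,0$.

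Le seul point délicat est l'égalité $\det(a)(d_F)=d_{F\oplus E}$ (et son analogue pour $b$). Telle quelle, c'est une instance du lemme à démontrer, et elle ne peut pas résulter des seuls axiomes i) et v) de \cite{KM} : ces axiomes ne contraignent que la multiplicativité $i$ et la valeur de $\det$ sur les modules et les isomorphismes de complexes, alors que $a$ est un quasi-isomorphisme sans être un isomorphisme de complexes. Il faut ici la propriété qui caractérise le prolongement de $\det$ aux quasi-isomorphismes dans \cite{KM} : pour une suite exacte $0\to F\to F'\to E\to 0$ avec $E$ acyclique, $\det(F\to F')$ est la composée de $i$ avec la trivialisation canonique de $\det E$ --- jointe au fait, élémentaire, que cette trivialisation envoie la section canonique de $E=[\Pi\xrightarrow{\id}\Pi]$ sur $1$. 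C'est précisément la compatibilité des pages 30 et 33 que cite le texte, et que vous reconnaissez d'ailleurs dans votre dernier paragraphe. Votre preuve ne supprime donc pas la dépendance à \cite{KM} : elle la réduit au cas, essentiellement définitionnel, d'une inclusion scindée à conoyau élémentaire acyclique. C'est un vrai gain de transparence par rapport à la simple citation du texte, à condition d'assumer explicitement ce point au lieu de l'attribuer aux axiomes i) et v).
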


\begin{lemm} \label{suiteexacte}
Soient $F, G, H$ des complexes de $\OS$-modules de type
$(\star)$,
et
$$
0\to G\to F\to H\to 0,
$$
une suite exacte.
Alors le diagramme
$$
\xymatrix{
\OS\otimes\OS
\ar@{=}[r]
\ar[d]^{d_G\otimes d_H}
&\OS
\ar[d]^{d_F}\\
\det G \otimes\det H
\ar[r]^{\qquad\sim}_{\qquad i}
&\det F
}
$$
est commutatif.
\end{lemm}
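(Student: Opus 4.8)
Le plan est de vérifier que les deux sections $d_F$ et $i\circ(d_G\otimes d_H)$ du fibré en droites $\det F$ coïncident. L'égalité de deux sections d'un fibré en droites étant une condition locale pour la topologie de Zariski, je me ramènerais à un ouvert de $S$ sur lequel les $\OS$-modules $G_1,G_0,H_1,H_0,F_1,F_0$ sont libres. Comme les $H_j$ sont localement libres, les suites exactes en chaque degré
$$
0\to G_j\to F_j\to H_j\to 0,\qquad j=0,1,
$$
sont alors scindées, et je choisirais des scindages $F_j\cong G_j\oplus H_j$. On pose $a=\rg G_1=\rg G_0$ et $b=\rg H_1=\rg H_0$ (des entiers bien définis puisque $G$ et $H$ sont de type $(\star)$).

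Ensuite je remarquerais que, $\alpha\colon G\to F$ et $\beta\colon F\to H$ étant des morphismes de complexes, la différentielle $d_F$ est triangulaire supérieure par blocs relativement à ces scindages:
$$
d_F=\begin{pmatrix} d_G & \phi \\ 0 & d_H \end{pmatrix}
\colon G_1\oplus H_1\longrightarrow G_0\oplus H_0,
$$
où $\phi\colon H_1\to G_0$ est l'unique terme non-diagonal (la compatibilité de $\alpha$ force $d_F|_{G_1}=d_G$ à valeurs dans $G_0$, et celle de $\beta$ force la composante dans $H_0$ de $d_F|_{H_1}$ à être $d_H$). La section canonique de $\det F$ est $\bigwedge^{\max}d_F\in(\det F_1)^{-1}\otimes\det F_0$. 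En identifiant $\det F_j\cong\det G_j\otimes\det H_j$ via les scindages (axiome v) de \cite{KM}, en ordonnant toujours le facteur $G$ avant $H$), j'évaluerais $\bigwedge^{\max}d_F$ sur un générateur $g_1\wedge\dots\wedge g_a\wedge h_1\wedge\dots\wedge h_b$: en développant les facteurs $d_F(h_k)=\phi(h_k)+d_H(h_k)$, tout terme faisant intervenir un $\phi(h_k)\in G_0$ comporte au moins $a+1$ vecteurs de $G_0$, et s'annule donc puisque $\bigwedge^{>a}G_0=0$. Seul subsiste le terme diagonal, d'où
$$
\bigwedge\nolimits^{\max}d_F=\bigl(\bigwedge\nolimits^a d_G\bigr)\otimes\bigl(\bigwedge\nolimits^b d_H\bigr),
$$
ce qui n'est autre que l'identité classique $\det\left(\begin{smallmatrix} A & B\\ 0 & D\end{smallmatrix}\right)=\det(A)\det(D)$, valable universellement (en particulier les deux membres sont nuls là où $d_G$ perd son rang).

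Il resterait à identifier ce membre de droite avec $i(d_G\otimes d_H)$. Par construction, $i$ est composé des isomorphismes de degré $\det F_j\cong\det G_j\otimes\det H_j$ ci-dessus et du réarrangement
$$
(\det F_1)^{-1}\otimes\det F_0\;\longrightarrow\;\bigl((\det G_1)^{-1}\otimes\det G_0\bigr)\otimes\bigl((\det H_1)^{-1}\otimes\det H_0\bigr),
$$
qui échange $(\det H_1)^{-1}$ et $\det G_0$. Comme $d_G\otimes d_H=\det(d_G)\otimes\det(d_H)$, le calcul précédent donnerait la commutativité. La difficulté principale sera le suivi des signes: ce réarrangement introduit un signe de Koszul via l'isomorphisme $\delta$ de \cite{KM} page 20, et il faut vérifier qu'il est compatible avec le développement ci-dessus (où l'on a ordonné $G$ avant $H$ en chaque degré). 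Le contenu mathématique---l'annulation du terme $\phi$ dans le déterminant par blocs---est élémentaire; c'est uniquement cette cohérence des conventions de signe qui demande de l'attention, et c'est précisément pour rendre commutatif le diagramme de \cite{KM} page 33 que ces conventions sont fixées. Enfin, l'indépendance du scindage est automatique, car on ne vérifie localement qu'une égalité de sections canoniquement définies.
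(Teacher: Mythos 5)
Votre démonstration est correcte, mais elle suit une route réellement différente de celle du papier : en effet, le papier ne démontre pas ce lemme, il omet la preuve en renvoyant au diagramme commutatif de \cite{KM} page 33 (établi là-bas pour des complexes \emph{acycliques}) et affirme sans argument que le cas général s'en déduit. Vous donnez au contraire un calcul local explicite : réduction au cas libre, scindage degré par degré (licite puisque les $H_j$ sont localement libres, donc projectifs), triangularité supérieure par blocs de $d_F$ (forcée par le fait que $\alpha$ et $\beta$ sont des morphismes de complexes), puis l'identité universelle $\det\left(\begin{smallmatrix} A & B\\ 0 & D\end{smallmatrix}\right)=\det(A)\det(D)$ obtenue par votre argument d'annulation dans $\bigwedge^{a+1}G_0$ --- et c'est précisément ce caractère universel de l'identité par blocs qui traite directement le cas non-acyclique, là où le papier se contente d'une affirmation («~mais ces démonstrations sont vraies en toute généralité~»). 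Le seul point que vous laissez à \cite{KM} est la cohérence des signes : l'isomorphisme $i_{\alpha,\beta}$ pour des complexes de type $(\star)$ fait intervenir l'échange de Koszul de $\det G_0$ (degré $a$) et de $(\det H_1)^{-1}$ (degré $-b$), donc un signe potentiel $(-1)^{ab}$, et il faut vérifier que les conventions de \cite{KM} (l'isomorphisme $\delta$ de la page 20) le font disparaître sur les sections canoniques. Différer cette vérification est acceptable --- c'est exactement la citation que fait le papier pour le lemme \emph{entier} --- de sorte que votre preuve n'est pas moins complète que celle du papier, seulement plus explicite. En résumé : votre approche fournit une démonstration directe, valable en toute généralité, dont le seul ingrédient externe est la normalisation des signes de \cite{KM}; l'approche du papier achète la brièveté au prix de reposer entièrement sur \cite{KM}.
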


\subsubsection{}

Soit $[F_1\xrightarrow{d_F}F_0]$
un complexe de $\OS$-modules de type $(\star)$.
On considère le complexe dual
$[F_0^{\vee}\xrightarrow{d_F^{\vee}}F_1^{\vee}]$
placé \emph{en degrés -1 et 0},
qui est aussi de type $(\star)$.
En identifiant
$(\det F_0^{\vee})^{-1}$
avec $\det F_0$,
on obtient un isomorphisme
$$
\psi: \det [F_1\xrightarrow{d_F}F_0]
\xrightarrow{\sim} 
\det [F_0^{\vee}\xrightarrow{d_F^{\vee}}F_1^{\vee}],
$$
où on échange les deux facteurs 
de $\det [F_1\xrightarrow{}F_0]$
via la loi de Koszul $x\otimes y \mapsto (-1)^{(\rg F)^2}y\otimes x$.
On vérifie aisément 
(en prenant les bases locales pour $F_0$ et $F_1$) que:

\begin{lemm} \label{complexedual}
Avec les notations précédentes,
le diagramme
$$
\xymatrix{
\OS 
\ar[r]^{d_F\qquad}
\ar[rd]_{d_F^{\vee}\quad}
&\det [F_1\to F_0]
\ar[d]^{\psi}_{\rotatebox{270}{$\sim$}}\\
&\det [F_0^{\vee}\to F_1^{\vee}]
}
$$
est commutatif.
\end{lemm}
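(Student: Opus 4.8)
The plan is to reduce at once to the case where $F_0$ and $F_1$ are free, since the assertion is local on $S$ and amounts to an equality of two global sections of the invertible sheaf $\det[F_0^{\vee}\to F_1^{\vee}]$. So first I would pass to an open subset on which $F_0$ and $F_1$ are free of common rank $r=\rg F$ and fix bases. Writing $A$ for the matrix of $d_F\colon F_1\to F_0$ in these bases, the canonical section $d_F$ of $\det[F_1\to F_0]=(\det F_1)^{-1}\otimes\det F_0$ is by definition the image of $\wedge^{r}d_F\in\Hom(\det F_1,\det F_0)$, namely $\det(A)\,\phi^{-1}\otimes\sigma$, where $\phi$ and $\sigma$ denote the chosen generators of $\det F_1$ and $\det F_0$.

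Next I would compute the target side. In the dual bases the matrix of $d_F^{\vee}\colon F_0^{\vee}\to F_1^{\vee}$ is the transpose $A^{\mathrm{t}}$, so the canonical section $d_F^{\vee}$ of $\det[F_0^{\vee}\to F_1^{\vee}]=(\det F_0^{\vee})^{-1}\otimes\det F_1^{\vee}$ equals $\det(A^{\mathrm{t}})\,(\sigma^{\vee})^{-1}\otimes\phi^{\vee}$, with $\sigma^{\vee}$ and $\phi^{\vee}$ the induced generators of $\det F_0^{\vee}$ and $\det F_1^{\vee}$. The key clean point of this step is that $\det(A^{\mathrm{t}})=\det(A)$, so transposition of the differential contributes \emph{no} sign: the entire content of the lemma is concentrated in comparing the two tensor generators through $\psi$.

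It then remains to check that $\psi$ sends $\phi^{-1}\otimes\sigma$ to $(\sigma^{\vee})^{-1}\otimes\phi^{\vee}$. By construction $\psi$ is the composite of the Koszul exchange $x\otimes y\mapsto(-1)^{(\rg F)^2}y\otimes x$ with the canonical identifications $\det F_1^{\vee}\cong(\det F_1)^{-1}$ and $(\det F_0^{\vee})^{-1}\cong\det F_0$. On generators the plain, ungraded identifications send $\sigma\mapsto(\sigma^{\vee})^{-1}$ and $\phi^{-1}\mapsto\phi^{\vee}$; the subtlety is that in the determinant formalism of \cite{KM} these are identifications of \emph{graded} invertible sheaves whose virtual ranks are $+r$ and $-r$, so the identification $(\det F_0^{\vee})^{-1}\cong\det F_0$ passes through the inverse/double-dual isomorphism $\delta$ of \cite{KM} page~20, which carries a rank-dependent sign.

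The main obstacle is precisely this sign bookkeeping, and it is exactly why the Koszul factor $(-1)^{(\rg F)^2}$ was built into the definition of $\psi$: the sign produced by the Koszul exchange of two graded line bundles of virtual ranks $\pm\rg F$ and the sign produced by the graded identification $\delta$ are both equal to $(-1)^{(\rg F)^2}$, and they cancel. Once this cancellation is verified on the chosen bases, one obtains $\psi(\phi^{-1}\otimes\sigma)=(\sigma^{\vee})^{-1}\otimes\phi^{\vee}$, whence $\psi(d_F)=d_F^{\vee}$, which is the commutativity of the triangle. Since both sides are sections of a line bundle and the construction is functorial and natural in $S$, the identity obtained on the chosen local trivialisation glues to all of $S$, completing the proof.
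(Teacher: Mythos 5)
Your proof is correct and takes essentially the same approach as the paper: the paper offers no written proof beyond the remark that the lemma ``se vérifie aisément en prenant les bases locales pour $F_0$ et $F_1$'', and your argument is exactly that local-basis verification with the details filled in. In particular, you correctly isolate the only real content --- $\det(A^{\mathrm{t}})=\det(A)$ together with the cancellation between the Koszul sign $(-1)^{(\rg F)^2}$ built into $\psi$ and the sign carried by the graded inverse/double-dual identification $\delta$ of Knudsen--Mumford (page 20), which is precisely the sign bookkeeping the paper's choice of conventions is designed to make work.
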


\subsection{Complexe de Lie d'un schéma de Raynaud}

\subsubsection{}
On utilise les résultats de \S 4 
pour calculer le complexe de Lie ${_{\Fq}\ell_H^{\vee}}$
associé à un schéma en groupes de Raynaud $H$,
et répond la question de trouver les paramètres de Raynaud
dans \S 1.
On reprend les notations
de \S 2 et \S 3:
$F$ est un corps totalement réel de degré $n$,
$p$ est inerte dans $F$,
et on fixe un isomorphisme $\OF/p\cong \Fq$.
Le caractère de Teichmüller $\chi$
s'étend en un morphisme de monoïde multiplicatif
$\chi: \Fq \to \Zq$
avec $\chi(0)=0$.
On note $\Fr: \Fq\to\Fq$
le morphisme de Frobenius 
et $\Fr^{(k)}$ sa $k$-fois composée.
On obtient un diagramme commutatif
$$
\xymatrix{
&\Zq
\ar@{->>}[d]\\
\Fq 
\ar[ru]^{\chi^{p^k}}
\ar[r]^{\quad\Fr^{(k)}}
&\Fq
}
$$
pour $k=0,...,n-1$. 
Comme $\Spec \OF$ est étale sur $\Spec \ZZ$ en $p$,
 le critère infinitésimal implique que 
pour tout $k$,  il existe un unique morphisme
$\Fr^{(k)}: \OF\to \Zq$
qui est un relèvement de $\Fr^{(k)}:\Fq\to\Fq$
par le diagramme commutatif
$$
\xymatrix{
\OF 
\ar[r]^{\Fr^{(k)}}
\ar@{->>}[d]
&\Zq
\ar@{->>}[d]\\
\Fq
\ar[r]^{\Fr^{(k)}}
&\Fq
}.
$$

On définit pour tout $k$,
l'élément
$$
e_k=
\frac{1}{q-1}\mathop\sum\limits_{\lambda \in \Fqs}
[\lambda]\otimes \chi^{-p^k}(\lambda)
$$
de $\ZZ[\Fqm]\otimes\Zq$.
Alors $e_k$ est le projecteur sur la composante $\chi^{p^k}$-isotypique
d'une représentation de $\Fqm$.
D'autre part, on a des isomorphismes 
$$
\Fq\otimes\Zq =\Fq\otimes_{\Fp}\Fq
\xrightarrow{\sim}
\mathop\oplus\limits_{k=0}^{n-1}\Fq,
\quad  \OF\otimes\Zq \xrightarrow{\sim}
\mathop\oplus\limits_{k=0}^{n-1}\Zq
$$
définis par la même formule 
$x\otimes y \mapsto (xy,\Fr(x)y,...,\Fr^{(n-1)}(x)y)$.
On note (par un abus de notation)
$$e_k\in \Fq\otimes\Zq\qquad e_k\in \OF\otimes\Zq$$
les projecteurs sur les $k$-ièmes composantes.
Via les morphismes naturels
$$
\ZZ[\Fqm]\otimes\Zq \rightarrow \Fq\otimes\Zq 
\leftarrow \OF\otimes\Zq,
$$
on a la correspondance
$e_k \mapsto e_k \testleft e_k$.

On suppose que
$S$ est un schéma sur $\Spec \Zq$ qui est plat sur $\Spec\ZZ$, 
et que $H/S$ est un schéma en groupes de Raynaud.
On a un diagramme commutatif
$$
\xymatrix{
\Db(\ZZ[\Fqm]\otimes\OS)
  \ar[rd]^{e_k} 
&\Db(\Fq\otimes\OS)
\ar[r]^{\Res}
\ar[l]_{\quad\Com}
 \ar[d]^{e_k} 
&\Db(\OF\otimes\OS) 
 \ar[ld]_{e_k}\\
&\Db(\OS)
},
$$
où $\Res$ (\resp $\Com$) est défini par
la proposition \ref{Res}
(\resp la proposition \ref{compatibilite}),
et les flèches $e_k$ est la multiplication par $e_k$.
Les propositions \ref{lMKv}, \ref{AlGv}
définissent deux complexes
$\ell_{H}^{\Fqm,\vee} \in \Db(\ZZ[\Fqm]\otimes\OS)$ et 
${_{\OF}\ell_H^{\vee}} \in \Db(\OF\otimes\OS)$

\begin{prop} \label{compatibiliteRaynaud}
Pour $k=0,...,n-1$,
il existe un isomorphisme canonique
$$
e_k (\ell_{H}^{\Fqm,\vee})\cong e_k({_{\OF}\ell_H^{\vee}})
$$
dans $\Db(\OS)$.
\end{prop}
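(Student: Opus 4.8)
The plan is to route the comparison through the intermediate $\Fq$-equivariant Lie complex of $H$. Since a Raynaud group scheme is by definition a scheme of $\Fq$-vector spaces, $H$ is a scheme of $\Fq$-modules, flat and of finite presentation over $S$; as $S$ is flat over $\Spec\ZZ$, Proposition \ref{AlGv} applies with $A=\Fq$ and furnishes a canonical object
$$
{_{\Fq}\ell_H^{\vee}}\in\Db(\Fq\otimes\OS),
$$
which will serve as a common lift of the two complexes appearing in the statement.

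First I would compare the left-hand side with this lift. Applying Proposition \ref{compatibilite} to $A=\Fq$ and $G=H$, whose underlying multiplicative monoid is $\Fqm$, yields a canonical isomorphism
$$
\Com({_{\Fq}\ell_H^{\vee}})\cong\ell_H^{\Fqm,\vee}
$$
in $\Db(\ZZ[\Fqm]\otimes\OS)$. Next I would compare the right-hand side by applying Proposition \ref{Res} with $A=\Fq$, $A'=\OF$ and the reduction map $\OF\to\Fq$ induced by the fixed isomorphism $\OF/p\cong\Fq$. The point to verify is that the $\OF$-module structure used to define ${_{\OF}\ell_H^{\vee}}$ is exactly the restriction of the $\Fq$-module structure along $\OF\to\Fq$; this holds because $H$ is killed by $p$, so its $\OF$-action factors through $\OF/p\cong\Fq$. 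Proposition \ref{Res} then gives a canonical isomorphism
$$
\Res({_{\Fq}\ell_H^{\vee}})\cong{_{\OF}\ell_H^{\vee}}
$$
in $\Db(\OF\otimes\OS)$.

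It then remains to pass to $\chi^{p^k}$-isotypic components. Applying the idempotent $e_k$ and invoking the commutative diagram preceding the statement---whose commutativity rests on the correspondence $e_k\mapsto e_k\testleft e_k$ under the natural maps $\ZZ[\Fqm]\otimes\Zq\to\Fq\otimes\Zq\leftarrow\OF\otimes\Zq$, which base-changes to $\OS$ since $S$ is a $\Zq$-scheme---both triangles identify the two isotypic components with that of the common lift:
$$
e_k(\ell_H^{\Fqm,\vee})\cong e_k({_{\Fq}\ell_H^{\vee}})\cong e_k({_{\OF}\ell_H^{\vee}}),
$$
where the middle term denotes $e_k$ applied in $\Db(\Fq\otimes\OS)$. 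Composing these isomorphisms yields the desired canonical isomorphism in $\Db(\OS)$.

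The argument is essentially formal once Propositions \ref{AlGv}, \ref{compatibilite} and \ref{Res} are in hand: the genuine difficulty has already been absorbed into the construction of ${_A\ell_G^{\vee}}$ over the small Zariski site and into its functorial behaviour under $\Com$ and $\Res$. The only step demanding attention is the compatibility of the $\OF$- and $\Fq$-module structures on $H$ via $\OF\to\Fq$, without which Proposition \ref{Res} would not apply to the correct intermediate object; this compatibility is immediate from the fact that $H$ is annihilated by $p$.
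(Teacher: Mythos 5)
Your proof is correct and follows essentially the same route as the paper: both pass through the intermediate complex ${_{\Fq}\ell_H^{\vee}}\in\Db(\Fq\otimes\OS)$, use Proposition \ref{compatibilite} to identify its image under $\Com$ with $\ell_H^{\Fqm,\vee}$, use Proposition \ref{Res} along $\OF\to\Fq$ to identify its image under $\Res$ with ${_{\OF}\ell_H^{\vee}}$, and conclude via the compatibility $e_k\mapsto e_k\testleft e_k$ of the idempotents under $\ZZ[\Fqm]\otimes\Zq\to\Fq\otimes\Zq\leftarrow\OF\otimes\Zq$. The paper's proof is just the terse chain $e_k(\ell_H^{\Fqm,\vee})\cong e_k(\Com({_{\Fq}\ell_H^{\vee}}))=e_k({_{\Fq}\ell_H^{\vee}})=e_k(\Res({_{\Fq}\ell_H^{\vee}}))\cong e_k({_{\OF}\ell_H^{\vee}})$; your added verifications (that $H$ is a scheme of $\Fq$-modules so Proposition \ref{AlGv} applies, and that the $\OF$-action factors through $\OF/p\cong\Fq$ since $H$ is killed by $p$) are exactly the points left implicit there.
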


\begin{proof}
Par les propositions \ref{compatibilite} et \ref{Res}, on a 
$$
e_k (\ell_{H}^{\Fqm,\vee})
\cong e_k(\Com({_{\Fq}\ell_H^{\vee}}))
= e_k({_{\Fq}\ell_H^{\vee}})
= e_k(\Res({_{\Fq}\ell_H^{\vee}}))
\cong e_k({_{\OF}\ell_H^{\vee}}).
$$
\end{proof}

\subsubsection{}
Soit $S$ un schéma comme précédemment
et $H/S$ un schéma en groupes de Raynaud 
de paramètre $(\LLL_k, a_k, b_k)$.
On utilise les mêmes notations que dans \S 4.1.
On a une immersions fermée canonique $i: H\to X$,
où $H$ (\resp $X$) est définie par la $\OS$-algèbre
$\AAA$ (\resp $\SSS$).
On note $\III\subset\SSS$ l'idéal de $i$.
On rappelle que  $i$
est $\Fqm$-équivariante
et elle est une immersion régulière.
(On peut le vérifier par les coordonnées locales comme dans
la démonstration du lemme \ref{lemmedepart}.)

Comme $X/S$ est lisse, 
la proposition \ref{intersectioncomplete} implique qu'il existe un isomorphisme
$$
\ell_{H}^{\Fqm,\vee} \cong [e^*(\III/\III^2) \to e^*i^*\Omega_{X/S}]^{\vee}
$$
dans $\Db(\ZZ[\Fqm]\otimes\OS)$,
qui induit des isomorphismes de composantes isotypiques
$$
e_k (\ell_{H}^{\Fqm,\vee})\cong 
[e^*(\III/\III^2) \to e^*i^*\Omega_{X/S}]^{\vee}_{\chi^{p^k}}
$$
pour tout $k$.
D'après le lemme \ref{lemmedepart},
\footnote{
Ici on utilise la $\Fqm$-action,
mais c'est équivalente à la $\Fqs$-action:
tout $\Fqm$-module $V$ s'écrit comme une somme directe $V_0\oplus V_1$,
où $V_0$ est annulé par $0$,
et $V_1$ est un $\Fqm$-module trivial.
}  
on a 
$$
[\LLL_{k-1}^{\otimes p} \xrightarrow{a_k} \LLL_k ]^{\vee}
\cong
[e^*(\III/\III^2) \to e^*i^*\Omega_{X/S}]_{\chi^{p^k}}^{\vee},
$$
donc un isomorphisme 
$$
e_k (\ell_{H}^{\Fqm,\vee})\cong 
[\LLL_{k-1}^{\otimes p} \xrightarrow{a_k} \LLL_k ]^{\vee}
$$
dans $\Db(\OS)$.
Soit
$$
0\to H\to B\to B/H\to 0
$$
une résolution $\OF$-équivariante de $H$
par un schéma abélien $B/S$ muni d'une $\OF$-action
avec la condition de Kottwitz dans la définition \ref{Yzero}.
D'après la proposition \ref{triangle}, il existe un isomorphisme 
$$
{_{\OF}\ell_H^{\vee}} \cong [\omega_{B/H} \to \omega_B]^{\vee}
$$
dans $\Db(\OF\otimes\OS)$,
donc un isomorphisme
$$
e_k({_{\OF}\ell_H^{\vee}}) \cong [e_k\omega_{B/H} \to e_k\omega_B]^{\vee}
$$
dans $\Db(\OS)$.
La proposition \ref{compatibiliteRaynaud} implique que

\begin{coro} \label{Raynaudabelien}
Soit $S$ un schéma sur $\Spec\Zq$ qui est plat sur $\Spec\ZZ$, 
et $H/S$ un schéma en groupe de Raynaud
de paramètre $(\LLL_k, a_k, b_k)$.
Soit $B$ un schéma abélien qui résout $H$ comme précédemment.
Alors il existe un isomorphisme canonique 
$$
[\LLL_{k-1}^{\otimes p} \xrightarrow{a_k} \LLL_k ]
\cong
[e_k\omega_{B/H} \to e_k\omega_B]
$$
dans $\Db(\OS)$ pour $k=0,...,n-1$.
\end{coro}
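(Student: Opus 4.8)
Le plan est d'assembler les isomorphismes déjà obtenus dans les paragraphes précédents, puis de passer au dual. Du côté de Raynaud, je rappellerais d'abord que la proposition \ref{intersectioncomplete}, appliquée à l'immersion fermée régulière $\Fqm$-équivariante $i: H\to X$, identifie $\ell_{H}^{\Fqm,\vee}$ au complexe $[e^*(\III/\III^2) \to e^*i^*\Omega_{X/S}]^{\vee}$ dans $\Db(\ZZ[\Fqm]\otimes\OS)$. En appliquant le projecteur $e_k$ et en invoquant le lemme \ref{lemmedepart} (qui calcule la composante $\chi^{p^k}$-isotypique de ce complexe), j'obtiens un isomorphisme
$$e_k (\ell_{H}^{\Fqm,\vee})\cong [\LLL_{k-1}^{\otimes p} \xrightarrow{a_k} \LLL_k ]^{\vee}$$
dans $\Db(\OS)$, pour tout $k$.

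Du côté abélien, la proposition \ref{triangle}, appliquée à la résolution $\OF$-équivariante $0\to H\to B\to B/H\to 0$, fournit un isomorphisme ${_{\OF}\ell_H^{\vee}} \cong [\omega_{B/H} \to \omega_B]^{\vee}$ dans $\Db(\OF\otimes\OS)$. En appliquant le projecteur $e_k$ on en tire
$$e_k({_{\OF}\ell_H^{\vee}}) \cong [e_k\omega_{B/H} \to e_k\omega_B]^{\vee}$$
dans $\Db(\OS)$.

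Le pont entre ces deux descriptions est la proposition \ref{compatibiliteRaynaud}, qui donne $e_k (\ell_{H}^{\Fqm,\vee})\cong e_k({_{\OF}\ell_H^{\vee}})$; en composant les trois isomorphismes on arrive à
$$[\LLL_{k-1}^{\otimes p} \xrightarrow{a_k} \LLL_k ]^{\vee} \cong [e_k\omega_{B/H} \to e_k\omega_B]^{\vee}$$
dans $\Db(\OS)$, et il suffit alors de dualiser, la dualité $\RR\uHom_{\OS}(-,\OS)$ étant une involution sur ces complexes parfaits à deux termes de fibrés en droites. Ainsi l'énoncé du corollaire est, à ce stade, une synthèse formelle: tout le travail substantiel a été reporté en amont. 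La seule précaution à prendre est la compatibilité des projecteurs $e_k$ à travers les catégories $\Db(\ZZ[\Fqm]\otimes\OS)$, $\Db(\Fq\otimes\OS)$ et $\Db(\OF\otimes\OS)$, assurée par la correspondance $e_k \mapsto e_k \testleft e_k$ établie plus haut. La véritable difficulté n'est donc pas ici mais dans la proposition \ref{compatibiliteRaynaud}, qui repose elle-même sur la redéfinition du complexe de Lie équivariant sur le petit site de Zariski (propositions \ref{AlGv} et \ref{compatibilite}) permettant de comparer les composantes isotypiques $\Fqs$- et $\OF$-équivariantes via $\Fq$.
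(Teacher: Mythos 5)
Votre démonstration est correcte et suit essentiellement la même voie que celle du texte: proposition \ref{intersectioncomplete} et lemme \ref{lemmedepart} du côté de Raynaud, proposition \ref{triangle} du côté abélien, puis la proposition \ref{compatibiliteRaynaud} comme pont, avant de dualiser. Vous identifiez d'ailleurs à juste titre que tout le contenu substantiel est reporté en amont dans la proposition \ref{compatibiliteRaynaud}, exactement comme dans le texte.
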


\subsection{Les modèles locaux $M_0^+$ et $M_1^+$}

\subsubsection{}
Dans cette section
on définit le modèle local $M_1^+$ 
pour le schéma de Hilbert-Siegel $Y_1$ de niveau $\Gpp$.
La construction consiste en deux étapes:
d'abord on construit un $\mathbb{G}_m^{ng}$-torseur
$M_0^+$ au-dessus de $M_{0,\Zq}$,
puis on définit le modèle local $M_1^+$
par la même équation que dans la définition \ref{generateur},
tel que $M_1^+$  est 
fini ramifié sur $M_0^+$.
La nécessité de ce $\mathbb{G}_m^{ng}$-torseur
vient du fait qu'on connaît les fibrés en droites $\LLL_k$ 
sur $Y_1$,
mais que sur $M_{0,\Zq}$ 
on ne connaît que $\LLL_k^{\otimes(p-1)}$.

D'abord une remarque sur la définition du modèle local $M_0$.
On rappelle que $w_p$ est l'élément particulier de $\Zq$ défini dans \S 3.1.1.
On définit un schéma $M_{0,\Zq}'$ sur $\Spec\Zq$
par le même problème de module que dans la définition \ref{Mzero} pour $M_0$,
mais dans toutes les flèches $St_{i,\Zq}\to St_{i-1,\Zq}$
on remplace l'élément $p$ par $w_p$.
Comme $w_p\in p\Zqs$,
il existe un isomorphisme naturel 
$M_{0,\Zq}\cong M_{0,\Zq}'$.
Dans la suite on identifie $M_{0,\Zq}$ avec $M_{0,\Zq}'$
via cet isomorphisme.

On définit le $\mathbb{G}_m^{ng}$-torseur $M_0^+$.
Soit $S$ un schéma sur $\Spec\Zq$,
et $(W_0, \ldots, W_g)$ un $S$-point de $M_{0,\Zq}$:
pour tout $i$,
$W_i$ est un sous-$\OF\otimes\OS$-module de $\OF^{2g}\otimes\OS$
qui vérifie les hypothèses de la définition \ref{Mzero}.
On note pour $i=0,...,g$ et $k=0,...,n-1$,
$$W_{i,k}:=e_kW_i,$$
où $e_k\in\OF\otimes\Zq$
est le projecteur défini dans \S 5.2.1.
La définition \ref{Mzero} implique que
le $\OS$-module $W_{i,k}$ est 
localement un facteur direct de $\OS^{2g}$ de rang $g$.
On en déduit que $[W_{i,k} \to W_{i-1,k}]$
est un complexe de type $(\star)$.

\begin{defi} \label{Mzeroplus}
On définit le schéma $M_0^+$ au-dessus de $M_{0,\Zq}$, 
tel que pour tout schéma $S$ sur $\Spec\Zq$
et tout élément $W_{\centerdot}\in M_{0,\Zq}(S)$,
l'ensemble de $S$-points de $M_0^+$
au-dessus de $W_{\centerdot}$
est égal à l'ensemble des trivialisations de fibré en droites
$$
v_{i,k}:
\det[W_{i,k} \to W_{i-1,k}]
\xrightarrow{\sim}\OS,\quad
i=1,...,g, \ k=0,...,n-1.
$$
\end{defi}

\subsubsection{}

Nous expliquons la construction du modèle local $M_1^+$.
On note $d_{i,k}$ la différentielle dans le complexe
$[W_{i,k} \to W_{i-1,k}]$.
Comme $W_{i,k}$ est un facteur direct de $\OS^{2g}$,
le complexe 
$
[(\OS^{2g}/W_{i-1,k})^{\vee}\to (\OS^{2g}/W_{i,k})^{\vee}]
$
est aussi de type $(\star)$.
On note $d_{i,k}^{\vee}$ sa différentielle.
On définit pour tout $i$ et $k$, 
un accouplement canonique 
$$\can: 
\det[W_{i,k} \to W_{i-1,k}] \otimes 
\det[(\OS^{2g}/W_{i-1,k})^{\vee}\to (\OS^{2g}/W_{i,k})^{\vee}]
\xrightarrow{\sim}\OS
$$
par la composition des flèches
$$
\xymatrix{
\det[W_{i,k} \to W_{i-1,k}] \otimes 
\det[(\OS^{2g}/W_{i-1,k})^{\vee}\to (\OS^{2g}/W_{i,k})^{\vee}]
\ar[d]^{\id\otimes\psi}
_{\rotatebox{270}{$\sim$}}
\\
\det[W_{i,k} \to W_{i-1,k}] \otimes 
\det[\OS^{2g}/W_{i,k}\to \OS^{2g}/W_{i-1,k}]
\ar[d]^{i}_{\rotatebox{270}{$\sim$}}\\
\det[\OS^{2g}
\xrightarrow{\diag(1,...,w_p,...,1)}
\OS^{2g}]
},
$$
où $\psi$ est défini dans le lemme \ref{complexedual},
et $i$ est associé à la suite exacte
$$
\xymatrix{
0\ar[r]
&W_{i,k}\ar[r] \ar[d]^{d_{i,k}}
&\OS^{2g} \ar[r] \ar[d]
&\OS^{2g}/W_{i,k}\ar[r] \ar[d]^{d_{i,k}}
&0\\
0\ar[r] 
&W_{i-1,k}\ar[r]
&\OS^{2g} \ar[r]
&\OS^{2g}/W_{i-1,k}\ar[r]
&0
}.
$$
Pour tout élément $(W_i, v_{i,k})$ de $M_0^+(S)$,
il existe une unique trivialisation
$$
v_{i,k}^{\vee}:
\det[(\OS^{2g}/W_{i-1,k})^{\vee}\to (\OS^{2g}/W_{i,k})^{\vee}]
\xrightarrow{\sim}\OS
$$
telle que
$v_{i,k}\otimes v_{i,k}^{\vee}=\can$.
On note $v_{i,k}d_{i,k}$
(\resp $v_{i,k}^{\vee}d_{i,k}^{\vee}$)
la composition de $v_{i,k}$ (\resp $v_{i,k}^{\vee}$)
avec $d_{i,k}$ (\resp $d_{i,k}^{\vee}$);
ce sont des éléments de $\Gamma(S,\OS)$.

\begin{defi} \label{Munplus}
Le modèle local $M_1^+$ est le 
$M_0^+$-sous-schéma de $\mathbb{A}^{2g}_{M_0^+}$
défini par les équations  (de variables $s_1,...,s_g, t_1,...,t_g$) suivantes:
$$
s_i^{p^n-1}=
v_{i,0}d_{i,0}\cdot (v_{i,n-1}d_{i,n-1})^p\cdot
...\cdot(v_{i,1}d_{i,1})^{p^{n-1}},\quad i=1,...,g
$$
$$
t_i^{p^n-1}=
v_{i,0}^{\vee}d_{i,0}^{\vee}\cdot (v_{i,n-1}^{\vee}d_{i,n-1}^{\vee})^p\cdot
...\cdot(v_{i,1}^{\vee}d_{i,1}^{\vee})^{p^{n-1}},\quad i=1,...,g
$$
$$
s_1t_1=...=s_gt_g,
$$
où $v_{i,k}, v_{i,k}^{\vee}, d_{i,k},d_{i,k}^{\vee}$ 
sont des objets universels sur $M_0^+$.
\end{defi}

\begin{theo} \label{diagmodellocal}
Le schéma $M_1^+$ est un modèle local pour $Y_1$:
il existe un diagramme 
$$
Y_1 \xleftarrow{\pi'} Z_1^+ \xrightarrow{f'} M_1^+,
$$
où $Z_1^+$ est un schéma sur $\Spec\Zq$,
et $\pi', f'$ sont des morphismes lisses.
De plus, on a $\dim M_1^+=\dim Y_1 + ng$.
\footnote{Ici $\pi'$ et $f'$ n'ont pas la même dimension relative,
néanmoins c'est suffisant pour comparer les cycles proches.}
\end{theo}

\subsubsection{}
Le reste de \S 5.3 est consacré à démontrer le théorème \ref{diagmodellocal}.
On définit d'abord le schéma $Z_1^+$ et la projection $\pi'$.
Soit $S$ un schéma sur $\Zq$.
Prenons un $S$-point $(A, H_i, x_i, y_i)$
de $Y_1$ 
(\cf la définition \ref{Yun} pour la notation complète).
Pour tout $i$,
on note 
$(\LLL_{i,k}, a_{i,k}, b_{i,k})$
le paramètre de Raynaud de $H_i/H_{i-1}$.
On définit le schéma $Z_1^+$ comme  suit:
l'ensemble des $S$-points de $Z_1^+$
au-dessus de $(A, H_i, x_i, y_i)$
est égal à l'ensemble des systèmes
$(\phi, u_{i,k})$
tels que
\begin{enumerate}[label=--]
\item $\phi: \mathcal{H}^1(A/H_{\centerdot}) \xrightarrow{\sim} 
St_{\centerdot,\Zq}\otimes_{\Zq}\OS$
est un isomorphisme de chaînes polarisées;

\item 
$u_{i,k}: \LLL_{i,k} \xrightarrow{\sim} \OS$
est une trivialisation de fibré en droites
pour $i=1,...,g$ et $k=0,...,n-1$.
\end{enumerate}
Le morphisme $\pi': Z_1^+ \to Y_1$ est la projection vers $Y_1$,
qui est un torseur sous un schéma en groupes lisse
(la proposition \ref{chainenormalisee}).

\subsubsection{}
Pour définir le morphisme $f'$,
il suffit de définir pour tout $S$-point $(\phi, u_{i,k})$ de  $Z_1^+$,
son image par $f'$ dans $M_1^+(S)$.
D'après la définition \ref{Munplus},
$f'(\phi, u_{i,k})$ est de la forme $(W_i, v_{i,k}, s_i, t_i)$.
La définition pour $W_i$ est claire:
on pose pour tout $i$,
$$
W_i:=\phi(\omega_{A/H_i})
\subset \OF^{2g}\otimes\OS.
$$

\subsubsection{}
On définit les trivialisations $v_{i,k}$.
On note d'abord $A, H_i, \LLL_{i,k}$
les objets universels sur $Y_{0,\Zq}$.
Le corollaire \ref{Raynaudabelien} nous fournit 
des isomorphismes canoniques dans $\Pic(Y_{0,\Zq})$:
$$c_{i,k}:
\det[\LLL_{i,k-1}^{\otimes p}\xrightarrow{a_{i,k}}\LLL_{i,k}]
\xrightarrow{\sim}
\det[e_k\omega_{A/H_i}\to e_k\omega_{A/H_{i-1}}]
$$
$$
c_{i,k}^{\vee}:
\det[\LLL_{i,k-1}^{\vee, \otimes p}\xrightarrow{b_{i,k}^{\vee}}\LLL_{i,k}^{\vee}]
\xrightarrow{\sim}
\det[e_k\omega_{(A/H_{i-1})^{\vee}}\to e_k\omega_{(A/H_{i})^{\vee}}].
$$
On va utiliser les mêmes notations $c_{i,k}, c_{i,k}^{\vee}$
pour leurs changements de base
sur tout schéma au-dessus de $Y_{0,\Zq}$.

Soit maintenant
$(A,H_i,x_i,y_i,\phi,u_{i,k})$
un $S$-point de $Z_1^+$,
et $W_{i,k}$ le faisceau défini dans \S 5.3.4.
Alors
$\phi$ induit des isomorphismes 
$$\phi:
\det[e_k\omega_{A/H_i}\to e_k\omega_{A/H_{i-1}}]
\xrightarrow{\sim} \det[W_{i,k}\to W_{i-1,k}]
$$
$$\phi^{-1}:
\det[e_k\omega_{(A/H_{i-1})^{\vee}}\to e_k\omega_{(A/H_{i})^{\vee}}]
\xrightarrow{\sim}
\det[(\OS^{2g}/W_{i-1,k})^{\vee}\to (\OS^{2g}/W_{i,k})^{\vee}].
$$
On définit la trivialisation $v_{i,k}$ par l'équation
$
v_{i,k} \cdot \phi \cdot c_{i,k} = 
u_{i,k-1}^{\otimes (-p)}\otimes u_{i,k}
$.
Dans le lemme suivant,
 on va voir que la trivialisation duale $v_{i,k}^{\vee}$ 
vérifie une équation similaire.
La démonstration est une comparaison 
(via les lemmes \ref{fonctorialite}, \ref{suiteexacte} et \ref{complexedual})
entre les images de la section canonique.

\begin{lemm} \label{trivduale}
Soit $v_{i,k}$ est la trivialisation
définie ci-dessus,
et $v_{i,k}^{\vee}$ sa trivialisation duale
(\S 5.3.2).
Alors on a une égalité des isomorphismes
$$
v_{i,k}^{\vee} \cdot \phi^{-1} \cdot c_{i,k}^{\vee} 
= (u_{i,k}^{-1}\otimes u_{i,k-1}^{\otimes p})\cdot \psi,
$$
où 
$\psi$
est  l'isomorphisme défini dans le lemme \ref{complexedual}.
\end{lemm}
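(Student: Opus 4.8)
The plan is to use the defining property of $v_{i,k}^{\vee}$: by the construction in \S 5.3.2 it is the \emph{unique} trivialisation satisfying $v_{i,k}\otimes v_{i,k}^{\vee}=\can$. Both sides of the asserted identity are trivialisations of the single line bundle $\det[\LLL_{i,k-1}^{\vee,\otimes p}\xrightarrow{b_{i,k}^{\vee}}\LLL_{i,k}^{\vee}]$, so it will suffice to show that they agree after evaluation on the canonical section $b_{i,k}^{\vee}$ of this complex, \emph{once} we know that section to be a non-zero-divisor. Both trivialisations commute with base change, so I would first reduce to the universal point over $Z_1^+$; since $Z_1^+$ is smooth over $Y_1$ (\S 5.3.3) and $Y_1$ is flat over $\Spec\Zq$ by Remark \ref{Yunplat}, the base is flat over $\Spec\ZZ$. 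Hence $w_p\in p\Zqs$ is a non-zero-divisor in the structure sheaf, and as $a_{i,k}b_{i,k}=w_p$ by Theorem \ref{classificationRaynaud}, so are $a_{i,k}$ and $b_{i,k}$. Since the $u_{i,k}$ trivialise the $\LLL_{i,k}$ over the whole base, I read $a_{i,k}$ and $b_{i,k}$ as functions $\alpha,\beta\in\Gamma(S,\OS)$ with $\alpha\beta=w_p$.

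First I would evaluate the left-hand side on the canonical section. As $c_{i,k}^{\vee}$ and $\phi^{-1}$ are determinants of isomorphisms of complexes of type $(\star)$, Lemma \ref{fonctorialite} shows each carries canonical section to canonical section, so the left-hand side sends $b_{i,k}^{\vee}$ to $v_{i,k}^{\vee}d_{i,k}^{\vee}$. For the right-hand side, Lemma \ref{complexedual} gives that $\psi$ sends $b_{i,k}^{\vee}$ to the canonical section of $[\LLL_{i,k}\xrightarrow{b_{i,k}}\LLL_{i,k-1}^{\otimes p}]$, which the trivialisation $u_{i,k}^{-1}\otimes u_{i,k-1}^{\otimes p}$ reads as $\beta$. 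Thus, granting the non-zero-divisor property above, the whole identity reduces to the scalar equality $v_{i,k}^{\vee}d_{i,k}^{\vee}=\beta$ in $\Gamma(S,\OS)$.

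To obtain this scalar equality I would evaluate the relation $v_{i,k}\otimes v_{i,k}^{\vee}=\can$ on the product $d_{i,k}\otimes d_{i,k}^{\vee}$ of canonical sections. The left side equals $(v_{i,k}d_{i,k})(v_{i,k}^{\vee}d_{i,k}^{\vee})$, and by the defining equation of $v_{i,k}$ together with Lemma \ref{fonctorialite} one has $v_{i,k}d_{i,k}=\alpha$. For the right side I must run $d_{i,k}\otimes d_{i,k}^{\vee}$ through the three-step composition defining $\can$ in \S 5.3.2: Lemma \ref{complexedual} sends it, via $\id\otimes\psi$, to $d_{i,k}$ tensored with the canonical section of the quotient complex $[\OS^{2g}/W_{i,k}\to\OS^{2g}/W_{i-1,k}]$, and Lemma \ref{suiteexacte}, applied to the displayed short exact sequence, identifies the image under $i$ with the canonical section of $\det[\OS^{2g}\xrightarrow{\diag(1,\dots,w_p,\dots,1)}\OS^{2g}]$, namely $w_p$. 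Hence $\alpha\cdot(v_{i,k}^{\vee}d_{i,k}^{\vee})=w_p=\alpha\beta$; cancelling the non-zero-divisor $\alpha$ gives $v_{i,k}^{\vee}d_{i,k}^{\vee}=\beta$. By the previous paragraph the two trivialisations then send $b_{i,k}^{\vee}$ to the same value $\beta$, a non-zero-divisor, so they coincide.

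The main obstacle is the middle computation $\can(d_{i,k}\otimes d_{i,k}^{\vee})=w_p$: one must follow canonical sections carefully through the composite defining $\can$, match the $w_p$ produced by $\det\diag(1,\dots,w_p,\dots,1)$ with the $w_p$ of the Raynaud relation $a_{i,k}b_{i,k}=w_p$, and keep the Koszul signs of $\psi$ under control---though the latter are already absorbed into the statements of Lemmas \ref{complexedual} and \ref{suiteexacte}.
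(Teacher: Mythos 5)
Your proposal is correct and takes essentially the same route as the paper, whose proof is exactly the one-sentence sketch preceding the lemma: a comparison of the images of the canonical section via Lemmas \ref{fonctorialite}, \ref{suiteexacte} and \ref{complexedual}, which is precisely the computation you carry out (in particular your evaluation $\can(d_{i,k}\otimes d_{i,k}^{\vee})=w_p$ and the resulting scalar identity $v_{i,k}^{\vee}d_{i,k}^{\vee}=\beta$). The one thing you add beyond the paper's sketch is the justification that comparing canonical sections suffices --- the reduction to the universal point of $Z_1^+$, flat over $\Spec\Zq$, so that $w_p$ and hence $a_{i,k}$, $b_{i,k}$ are non-zero-divisors --- a genuine subtlety (the argument would fail over a characteristic-$p$ base, where $w_p=0$) that the paper leaves implicit and that you handle correctly.
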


\subsubsection{}
On définit les racines $s_i, t_i$ (comme des éléments de $\Gamma(S,\OS)$)
 par les équations
$$
x_i=s_i\cdot u_{i,0} \qquad y_i=t_i\cdot u_{i,0}^{-1}.
$$
On montre que les $s_i, t_i$ vérifient les équations dans la définition \ref{Munplus}.
Comme $x_i$ est un générateur de Raynaud (\cf la définition \ref{generateur}),
on a 
\begin{equation*}
\begin{aligned}
s_i^{p^n-1}&=
u_{i,0}\cdot (a_{i,0}a_{i,n-1}^{\otimes p} ... 
a_{i,1}^{\otimes p^{n-1}})\cdot u_{i,0}^{\otimes (-p^n)}\\
&=
(u_{i,n-1}^{\otimes(-p)}\otimes u_{i,0})a_{i,0}\cdot
((u_{i,n-2}^{\otimes(-p)}\otimes u_{i,n-1})a_{i,n-1})^{p}\cdot
...\cdot 
((u_{i,0}^{\otimes(-p)}\otimes u_{i,1})a_{i,1})^{p^{n-1}},
\end{aligned}
\end{equation*}
où  pour tout $k$,
$(u_{i,k-1}^{\otimes(-p)}\otimes u_{i,k})a_{i,k}$
est défini par
la composition des flèches dans le diagramme
$$
\xymatrix{
\OS
\ar[d]_{a_{i,k}}
\ar[rd]^{d_{i,k}}
\\
\det[\LLL_{i,k-1}^{\otimes p}\to\LLL_{i,k}]
\ar[rd]_{u_{i,k-1}^{\otimes(-p)}\otimes u_{i,k}}
\ar[r]_{\sim}^{\phi c_{i,k}}
&\det[W_{i,k} \to W_{i-1,k}]
\ar[d]^{v_{i,k}}
\\
&\OS
}.
$$
D'après le lemme \ref{fonctorialite} et la définition de $v_{i,k}$,
ce diagramme est commutatif,
d'où l'équation de $s_i$
dans la définition \ref{Munplus}.
Un même argument sur le diagramme
$$
\xymatrix{
\OS
\ar[d]_{b_{i,k}^{\vee}}
\ar[rd]^{d_{i,k}^{\vee}}
\\
\det[\LLL_{i,k-1}^{\vee,\otimes p}\to\LLL_{i,k}^{\vee}]
\ar[rd]_{(u_{i,k}^{-1}\otimes u_{i,k-1}^{\otimes p}) \psi}
\ar[r]_{\sim\qquad\qquad}^{\phi^{-1} c_{i,k}^{\vee}\qquad\qquad}
&\det[(\OS^{2g}/W_{i-1,k})^{\vee} \to (\OS^{2g}/W_{i,k})^{\vee}]
\ar[d]^{v_{i,k}^{\vee}}
\\
&\OS
}
$$
implique l'équation de $t_i$;
la commutativité de ce diagramme est un corollaire du lemme \ref{trivduale}.

\subsubsection{}
On montre que $f'$ est lisse.
On considère le morphisme $f: Z_0\to M_0$ défini dans \S 2.2.2.
D'après la proposition \ref{grothendieckmessing}
il suffit de montrer que 
$
f': Z_1^+ \xrightarrow{} Z_0\times_{M_0}M_1^+
$
est étale.
En fait, 
$Z_0 \times_{M_0}M_1^+$
est le schéma de module des systèmes
$
(A,H_., \phi, v_{i,k}, s_i, t_i)
$,
où $v_{i,k}$ est une trivialisation 
$\det[e_k\phi(\omega_{A/H_i})\to e_k\phi(\omega_{A/H_{i-1}})]$,
et $s_i,t_i$ sont définis par la définition \ref{Munplus}.
On définit un schéma $Z_0^+$ sur $\Spec\Zq$,
qui est le schéma de module des systèmes 
$
(A,H_i,\phi,u_{i,k})
$,
où les $u_{i,k}$ sont définies dans \S 5.3.3.
On obtient un diagramme cartésien
$$
\xymatrix{
Z_1^+
\ar[r]
\ar[d]^{f'}
&Z_0^+
\ar[d]^{h}\\
Z_0\times_{M_0}M_1^+
\ar[r]
&
Z_0\times_{M_0}M_0^+
}.
$$
Il suffit de montrer que $h$
est étale.
En fait, $h$ envoie
$(A,H_i,\phi,u_{i,k})$
sur $(A,H_i,\phi,v_{i,k})$,
tel que $v_{i,k}$ est associé à  $u_{i,k}$
de la même manière de \S 5.3.5.
Localement sur la base,
$h$ est égal à
$
\mathbb{G}_m^{ng} \to \mathbb{G}_m^{ng} 
$,
$
(u_{i,k}) \mapsto (u_{i,k-1}^{-p} u_{i,k}),
$
donc il est étale en prenant son algèbre de Lie.

\subsection{Le cas non-ramifié général}

Dans cette section, 
on étend la construction de $M_1^+$ dans \S 5.3
au cas non-ramifié général.
On suppose que $F/\QQ$ est un corps totalement réel de degré $n$,
et $p$ un nombre premier non-ramifié dans $F$,
tel que le cardinal résiduel de $p$ dans $\OF$
est égal à $q=p^{n'}$
(donc $p$ est décomposé comme un produit de 
$n/n'$ idéaux premiers).
On considère d'abord la construction de $Y_1$
dans \S 3.2.
Soit $S$ un schéma sur $\Spec\Zq$,
et $(A, H_i)$ un $S$-point de $Y_0$. 
Pour $i=1,...,g$,
le schéma en groupes $H_i$ est muni d'une action de l'algèbre
$\OF/p=\mathop\bigoplus\limits_{r=1}^{n/n'}\Fq$,
donc décomposé comme le produit
$$H_i=\mathop\prod\limits_r H_{i,r}.$$
Pour tout $(i,r)$,
$H_{i,r}/H_{i-1,r}$ est un schéma en groupes de Raynaud.
La définition de générateur de Raynaud s'étend à $H_i/H_{i-1}$:
un générateur $z_i$ de $H_i/H_{i-1}$ est la donnée
$$
z_i=(z_{i,1},...,z_{i,n/n'})
$$
où pour tout $r$,
$z_{i,r}$ est un générateur de Raynaud de $H_{i,r}/H_{i-1,r}$ .
Donc on peut définir le modèle entier $Y_1$ de la même façon 
que dans la définition \ref{Yun}.

Pour définir le modèle local $M_1^+$,
il suffit d'expliquer pourquoi
on peut trouver
les paramètres de Raynaud de chaque
schéma en groupes $H_{i,r}/H_{i-1,r}$
dans le même complexe $[\omega_{A/H_i}\to \omega_{A/H_{i-1}}]$.
C'est réalisé par la proposition \ref{GGprime}.
En effet, d'après la proposition \ref{triangle} 
il existe un isomorphisme
$$
{_{\OF}\ell_{H_i/H_{i-1}}^{\vee}} \cong
[\omega_{A/H_i}\to \omega_{A/H_{i-1}}]
$$
dans $\Db(\OF\otimes \OS)$.
Par la proposition \ref{Res}, ${_{\OF}\ell_{H_i/H_{i-1}}^{\vee}}$
est isomorphe à l'image de 
$$
{_{\OF/p}\ell_{H_i/H_{i-1}}^{\vee}}
\in \Db(\OF/p\otimes \OS)
$$
par le foncteur $\Res$.
La proposition \ref{GGprime} implique qu'il existe une décomposition
$$
{_{\OF/p}\ell_{H_i/H_{i-1}}^{\vee}} \cong
\mathop\bigoplus\limits_{r=1}^{n/n'}
{_{\Fq}\ell_{H_{i,r}/H_{i-1,r}}^{\vee}}
$$
dans $\Db(\OF/p\otimes \OS)$.
Si l'on note 
$\varepsilon_r$ le $r$-ième projecteur
dans les anneaux
$\OF\otimes\Zpp=\mathop\oplus\limits_r\Zq$
et $\OF/p=\mathop\oplus\limits_r\Fq$
pour tout $r$,
alors on a 
$$
\Res({_{\Fq}\ell_{H_{i,r}/H_{i-1,r}}^{\vee}}) \cong
\varepsilon_r\Res({_{\OF/p}\ell_{H_i/H_{i-1}}^{\vee}}) \cong
\varepsilon_r ({_{\OF}\ell_{H_i/H_{i-1}}^{\vee}}) \cong
\varepsilon_r[\omega_{A/H_i}\to \omega_{A/H_{i-1}}]
$$
dans $\Db(\Zq\otimes_{\Zpp}\OS)$.
Par la preuve de la proposition \ref{compatibiliteRaynaud} et le corollaire \ref{Raynaudabelien},
on sait que le $k$-ième paramètre de Raynaud de $H_{i,r}/H_{i-1,r}$
est isomorphe à
$e_k\varepsilon_r [\omega_{A/H_i}\to \omega_{A/H_{i-1}}]$,
où  $e_k$ est le projecteur défini dans \S 5.2.

Donc on peut utiliser les mêmes idées de \S 5.3 
pour définir les schémas $M_0^+$ et $M_1^+$
(pour définir $M_0^+$ 
il suffit de trivialiser tous les fibrés
$\det[e_k\varepsilon_r W_i\to e_k\varepsilon_r W_{i-1}]$),
et l'analogue du théorème \ref{diagmodellocal} reste valable.


\begin{thebibliography}{99}



\bibitem [B]{B}
 {\sc L. Breen.} ---
 {\it Extensions du groupe additif}, 
Publications Mathématiques de l'IHÉS,  Vol. 48, 1978




\bibitem [dJ]{dJ}
 {\sc A. J. de Jong.} ---
 {\it The moduli spaces of principally polarized abelian varieties 
 with $\Gp$-level structure}, 
 Journal of Algebraic Geometry, vol.2, 1992.


\bibitem [DP]{DP}
 {\sc P. Deligne, G. Pappas.} ---
 {\it Singularités des espaces de modules de Hilbert, 
en les caractéristiques divisant le discriminant}, 
 Compositio Mathematica, vol. 90, 1994.


\bibitem [Far]{Far}
 {\sc L. Fargues.} ---
 {\it La filtration de Harder-Narasimhan des schémas en groupes finis et plats}, 
Journal für die reine und angewandte Mathematik 645, 2010
\bibitem [G]{G}
 {\sc U. Görtz.} ---
 {\it On the flatness of local models for the symplectic group}, 
Advances in Mathematics, vol.176, 2003




\bibitem [Hai]{H}
 {\sc T. Haines.} ---
 {\it Introduction of Shimura varieties with bad reduction of parahoric types}, Clay mathematics proceeding, vol. 4, 2005.


\bibitem [HR]{HR}
 {\sc T. Haines, M. Rapoport.} ---
 {\it Shimura varieties with $\Gpp$-level via Hecke algebra isomorphismes:
the Drinfeld case}, 
Annales scientifiques de l'E.N.S. vol 45,
fascicule 5, 2012

\bibitem [HS]{HS}
 {\sc T. Haines, B. Stroh.} ---
 {\it Local models and nearby cycles for $\Gpp$-level structure}, 
en préparation.



\bibitem [Ill1]{I}
 {\sc Luc Illusie.} ---
 {\it Complexe Cotangent et Déformation I, II}, 
 Lecture Notes in Mathematics 239, 283, Springer-Verlag, 1971, 1972.

\bibitem [Ill2]{I2}
 {\sc Luc Illusie.} ---
 {\it Déformations de groupes de Barsotti-Tate, d'après Grothendieck}, 
Astérisque 183, 1990
 

\bibitem [KM]{KM}
 {\sc F. Knudsen, D. Mumford.} ---
 {\it The projectivity of the moduli space of stable curves I:
preliminaries on "$\det$" and "$\Div$"}, 
Math. Scand. 89, 1976.


\bibitem [Kot]{K}
 {\sc Robert Kottwitz.} ---
 {\it Points on some Shimura varieties over finite fields}, 
 Journal of the American Mathematical Society, vol. 5, No.2, 1992.


\bibitem [Lan]{L}
 {\sc Kai-Wen Lan.} ---
 {\it Arithmetic compactifications of PEL-type Shimura varieties}, 
 thesis, Havard University, 2008.

\bibitem [OT]{OT}
 {\sc F. Oort, J. Tate.} ---
 {\it Group schemes of prime order}, 
Annales scientifiques de l'E.N.S. tome 3, numéro 1, 1970.

\bibitem [Pap1]{P1}
 {\sc G. Pappas.} ---
 {\it Arithmetic models for Hilbert modular varieties}, 
 Compositio Mathematica, vol. 98, 1995.

\bibitem [Pap2]{P2}
 {\sc G. Pappas.} ---
 {\it On the arithmetic moduli scheme of PEL Shimura varieties}, 
 Journal of Algebraic Geometry, vol. 9, 2000.

\bibitem [PZ]{PZ}
 {\sc G. Pappas, X. Zhu.} ---
 {\it Local model of Shimura varieties and a conjecture of Kottwitz}, 
Inventiones Mathematica, vol.194, No. 1, 2013.


\bibitem [Ray]{R}
 {\sc M. Raynaud.} ---
 {\it Schémas en groupes de type $(p,...,p)$}, 
Bulletin de la Société Mathématique de France, Tome 102, fascicule 3, 1974.

\bibitem [RZ]{RZ}
 {\sc M. Rapoport, Th. Zink.} ---
 {\it Period Spaces for $p$-divisible Groups}, 
 Annals of Mathematics Studies 141, Princeton University, 1996.


\bibitem[SGA4]{S4}
 {\sc M. Artin, A. Grothendieck, J. -L. Verdier et al.} ---
 {\it Théorie des topos et cohomologie étale des schémas}, 
 Lecture Notes in Mathematics 269, 270, 305, Springer-Verlag, 1972/73.
 
 \bibitem[SGA6]{S6}
 {\sc P.Berthelot, O.Jussila, A.Grothendieck et al.} ---
 {\it Théorie des intersections et Théorème de Riemann-Roch}, 
 Lecture Notes in Mathematics 225, Springer-Verlag, 1971.



\bibitem [Sh]{Sh}
 {\sc R. Shadrach.} ---
 {\it Integral models of Shimura varieties with $\Gpp$-type
level structure}, 
Manuscripta Math. 151, no. 1-2, 2016



\end{thebibliography}
\end{document}